\newcommand{\R}{\mathbb{R}}
\newcommand{\On}{\mathrm{O}(n)}
\newcommand{\Orm}{\mathrm{O}}
\newcommand{\Bcal}{\mathcal{B}}
\newcommand{\Ccal}{\mathcal{C}}
\newcommand{\Fcal}{\mathcal{F}}
\newcommand{\Rcal}{\mathcal{R}}
\newcommand{\Scal}{\mathcal{S}}
\newcommand{\Mcal}{\mathcal{M}}
\newcommand{\spann}{\mathrm{span}}
\newcommand{\sym}{\mathrm{sym}}
\newcommand{\Stab}[2]{\mathrm{Stab}_{#2}(#1)}
\DeclareMathOperator{\tr}{tr}
\newcommand*\diff{\mathop{}\!\mathrm{d}}
\newtheorem{defin}{Definition}[section]
\newtheorem{proposition}[defin]{Proposition}
\newtheorem{theorem}[defin]{Theorem}
\newtheorem{corollary}[defin]{Corollary}
\newtheorem{conjecture}[defin]{Conjecture}
\theoremstyle{remark}
\newtheorem{remark}[defin]{Remark}
\date{16 November 2019}
\title{$k$-point semidefinite programming bounds for equiangular lines}
\author{David de Laat}
\address{D. de Laat, Massachusetts Institute of Technology, Department of Mathematics, 77 
Massachusetts Avenue, Simons building (Building 2), Room 2-241}
\email{mail@daviddelaat.nl}
\author{Fabr\'\i cio Caluza Machado}
\address{F.C. Machado, Instituto de Matem\'atica e Estat\'\i stica, Rua do 
Mat\~ao 1010, 05508-090 S\~ao Paulo/SP, Brazil.}
\email{fabcm1@gmail.com}
\author{Fernando M\'ario de Oliveira Filho}
\address{F.M. de Oliveira Filho, Delft Institute of Applied
  Mathematics, Delft University of Technology, Van Mourik Broekmanweg
  6, 2628 XE Delft, The Netherlands.}
\email{fmario@gmail.com}
\author{Frank Vallentin}
\address{F.~Vallentin, Mathematisches Institut, Universit\"at zu
  K\"oln, Weyertal~86--90, 50931 K\"oln, Germany.}
\email{frank.vallentin@uni-koeln.de}
\subjclass{90C22, 52C99}
\thanks{D.d.L. was partially supported by NWO Rubicon grant 680-50-1528. 
  F.C.M was supported by grant \#2017/25237-4, from
  the S\~ao Paulo Research Foundation (FAPESP) and was financed in
  part by the Coordena\c{c}\~ao de Aperfei\c{c}oamento de Pessoal de
  N\'\i vel Superior --- Brasil (CAPES) --- Finance Code~001. F.V. was
  partially supported by the SFB/TRR~191 ``Symplectic Structures in
  Geometry, Algebra and Dynamics'', funded by the DFG\@. This project
  has received funding from the European Union’s Horizon 2020 research
  and innovation programme under the Marie Sk\l{}odowska-Curie
  agreement number~764759.}
\begin{document}

\begin{abstract}
We give a hierarchy of $k$-point bounds extending the Delsarte-Goethals-Seidel linear programming $2$-point bound and the Bachoc-Vallentin semidefinite programming $3$-point bound for spherical codes. An optimized implementation of this hierarchy allows us to compute~$4$, $5$, and $6$-point bounds for the maximum number of equiangular lines in Euclidean space with a fixed common angle.
\end{abstract}

\maketitle
\markboth{$k$-point semidefinite programming bounds for equiangular lines}{D. de Laat, F.C. Machado, F.M. de Oliveira Filho, and F. Vallentin}

\section{Introduction}

Given $D \subseteq [-1,1)$, a subset
$C$ of the unit sphere $S^{n-1} = \{\, x \in \R^n : \|x\|= 1\,\}$ is a \emph{spherical $D$-code} if
$x \cdot y \in D$ for all distinct $x$, $y \in C$, where~$x\cdot y$ is the Euclidean inner product between~$x$ and~$y$. The maximum cardinality of a spherical $D$-code in~$S^{n-1}$ is denoted by~$A(n,D)$.

A fundamental tool for computing upper bounds for~$A(n,D)$ is the \emph{linear programming bound} of Delsarte, Goethals, and Seidel~\cite{delsarte77}. If~$D = [-1, \cos(\pi/3)]$, then~$A(n, D)$ is the \emph{kissing number}, the maximum number of pairwise nonoverlapping unit spheres that can touch a central unit sphere; the linear programming bound was one of the first nontrivial upper bounds for the kissing number.

The linear programming bound is the optimal value of a convex optimization problem. It is a \emph{$2$-point bound}, because it takes into account interactions between pairs of points on the sphere: pairs~$\{x, y\}$ with~$x \cdot y \notin D$ correspond to constraints in the optimization problem. Bachoc and Vallentin~\cite{bachoc08} extended the linear programming bound to a \emph{$3$-point bound} by taking into account interactions between triples of points. The resulting \emph{semidefinite programming bound} gives the best known upper bounds for the kissing number in dimensions~$n = 5$, \dots,~$23$, except for dimension~$8$ where the linear programming bound is sharp.

In this paper, we give a hierarchy of \emph{$k$-point bounds} that extend both the linear and semidefinite programming bounds. The parameter~$A(n, D)$ is the independence number of a topological packing graph, namely the infinite graph with vertex set~$S^{n-1}$ in which two vertices~$x$ and~$y$ are adjacent if~$x\cdot y \notin D$. The linear programming bound corresponds to an extension of the Lov\'asz theta number to this infinite graph~\cite{bachoc09}. In Section~\ref{sec:hierarchy}, we derive our hierarchy from a generalization~\cite{laat15} of Lasserre's hierarchy to the independent set problem in these topological packing graphs. The first level of our hierarchy is the Lov\'asz theta number, and is therefore equivalent to the linear programming bound; the second level is the semidefinite programming bound, as shown in Section~\ref{sec:connection}.

Though our hierarchy is not as strong, in theory, as Lasserre's hierarchy, it is computationally less expensive. This allows us to use it to compute $4$, $5$, and~$6$-point bounds for the maximum number of equiangular lines with a certain angle, a problem that corresponds to the case~$|D|=2$. Aside from a previous result of de Laat~\cite{laat19}, which uses Lasserre's hierarchy directly, this is the first successful use of $k$-point bounds for~$k > 3$ for geometrical problems; it yields improved bounds for the number of equiangular lines with given angles in several dimensions.

To perform computations, we transform the resulting problems into semidefinite programming problems. To this end, for a given~$k \geq 2$ we use a characterization of kernels~$K\colon S^{n-1} \times S^{n-1} \to \R$ on the sphere that are invariant under the action of the subgroup of the orthogonal group that stabilizes~$k-2$ given points. For~$k=2$, this characterization was given by Schoenberg~\cite{schoenberg42} and for~$k = 3$, by Bachoc and Vallentin~\cite{bachoc08}; Musin~\cite{musin08} extended these two results for~$k > 3$; a similar result is given by Kuryatnikova and Vera \cite{kuryatnikova19}.

Our hierarchy is particularly suited for problems like the equiangular lines problem, where~$D$ is finite. Indeed, let~$I_k$ be the set of all spherical $D$-codes of cardinality at most~$k$, and let the orthogonal group act on~$S \in I_k$ by rotating each point in~$S$. The final semidefinite programming problem has one block corresponding to each orbit of~$I_k$ under this action. So, if~$k \leq 3$ or~$D$ is finite, then the number of orbits is finite and we get a finite semidefinite programming problem.

Still, a naive implementation of our approach would be too slow even to generate the problems for~$k = 5$. The implementation available with the arXiv version of this paper was carefuly written to deal with the orbits of~$I_k$ in an efficient way; this allows us to generate problems even for~$k = 6$. This implementation could be of interest to others working on similar problems.

\subsection{Equiangular lines}

A set of \textit{equiangular lines} is a set of lines through the origin
such that every pair of lines defines the same
angle. If this angle is~$\alpha$, then such a set of equiangular lines corresponds to a spherical $D$-code where~$D = \{a, -a\}$ and~$a = \cos\alpha$. So we are interested in finding~$A(n, \{a,-a\})$ for a given~$a \in [-1,1)$ and also in finding the maximum number of equiangular lines with any given angle, namely
\[
    M(n) = \max\{\, A(n, \{a,-a\}) : a \in [-1,1)\,\}.
\]

The study of~$M(n)$
started with Haantjes \cite{haantjes48} in 1948. He showed that~$M(2) = 3$ and that the optimal configuration is a set of lines in the plane having a common angle of~$60$ degrees. He also showed that~$M(3) = 6$; the optimal configuration is given by the lines going through opposite vertices of a
regular icosahedron, which have a common angle of~$63.43\ldots$
degrees.  These two constructions provide lower bounds; in both cases,
Gerzon's bound, which states that~$M(n) \leq n(n+1)/2$ (see
Theorem~\ref{thm:gerzon} below which is proven for example in
Matou\v{s}ek's book~\cite[Miniature 9]{matousek10}), provides matching
upper bounds.

In the setting of equiangular lines, the LP bound coincides with the
relative bound (see Theorem~\ref{thm:lint}). The $3$-point SDP bound
was first specialized to this setting by Barg and Yu~\cite{barg13}. 
Gijswijt, Mittelmann,
and Schrijver~\cite{gijswijt12} computed $4$-point SDP bounds for
binary codes and Litjens, Polak, and Schrijver~\cite{litjens17} extended
these $4$-point bounds for $q$-ary codes. For spherical codes, no $k$-point bounds have been computed or formulated for $k \ge 4$.

Next to being fundamental objects in discrete geometry, equiangular
lines have applications, for example in the field of compressed
sensing: Only measurement matrices whose columns are unit vectors
determining a set of equiangular lines can minimize the coherence
parameter~\cite[Chapter 5]{foucart13}.

In general, it is a difficult problem to determine~$M(n)$ for a given dimension~$n$. Currently, the
first open case is dimension $n = 14$ where it is known that $M(14)$
is either $28$ or $29$; see Table~\ref{tab:Mn-small}. Sequence A002853
in The On-Line Encyclopedia of Integer Sequences~\cite{sloane}
is~$M(n)$.

\section{Derivation of the hierarchy}\label{sec:hierarchy}

Let $G = (V,E)$ be a graph. A subset of~$V$ is \textit{independent} if
it does not contain a pair of adjacent vertices. The
\textit{independence number} of~$G$, denoted by~$\alpha(G)$, is the
maximum cardinality of an independent set. For an integer $k \geq 0$,
let $I_{k}$ be the set of independent sets in $G$ of size at most~$k$
and $I_{=k}$ be the set of independent sets in $G$ of size
exactly~$k$.

Assume for now that~$G$ is finite. We can obtain upper bounds for the
independence number via the Lasserre hierarchy~\cite{lasserre02} for
the independent set problem, whose $t$-th step can be formulated as
\begin{align*}
\max \bigg\{\,\sum_{S \in I_{=1}} \nu_S : \nu \in \R^{I_{2t}}_{\geq 0}, \
\nu_{\emptyset} = 1,\text{ and } M(\nu) \succeq 0\,\bigg\},
\end{align*}
where $M(\nu)$ is the matrix indexed by $I_t \times I_t$ such that
\[
M(\nu)_{J,J'} = \begin{cases} \nu_{J \cup J'} & \text{if } J\cup J' \text{ is 
independent};\\
0 & \text{otherwise}
\end{cases}
\]
and $M(\nu) \succeq 0$ means that~$M(\nu)$ is positive semidefinite.
To produce an optimization program where the variables are easier to
parameterize, we construct in two stages a weaker hierarchy with
matrices indexed only by the vertex set of the graph. First, we modify
the problem to remove $\emptyset$ from the domain of~$\nu$; this gives
the possibly weaker problem
\begin{align*}
\max \bigg\{\,1+2\sum_{S \in I_{=2}} \nu_S : \nu \in \R^{I_{2t} \setminus 
  \{\emptyset\}}_{\geq 0}, \ \sum_{S \in I_{=1}} \nu_S = 1,\text{ and } M(\nu) \succeq 
  0\,\bigg\},
\end{align*}
where $M(\nu)$ is now considered as a matrix indexed by
$(I_t \setminus \{\emptyset\}) \times (I_t \setminus \{\emptyset\})$.

Second, we construct a weaker hierarchy by only requiring certain
principal submatrices of $M(\nu)$ to be positive semidefinite, an
approach similar to the one employed by Gvozdenovi\'c, Laurent, and
Vallentin~\cite{gvozdenovic09}.  For this we fix~$k \geq 2$ and, for
each $Q \in I_{k-2}$, define the matrix
$M_Q(\nu)\colon V \times V \to \R$ by
\[
M_Q(\nu)(x,y) = \begin{cases}
\nu_{Q \cup \{x, y\}} & \text{if $Q \cup \{x, y\} \in I_{k}$};\\
0 & \text{otherwise}
\end{cases}
\]
and replace the condition `$M(\nu) \succeq 0$' by
`$M_Q(\nu) \succeq 0$ for all $Q \in I_{k-2}$'. With these conditions
we can restrict the support of $\nu$ to the set
$I_k \setminus \{\emptyset\}$, obtaining the relaxation
\begin{align}\label{prog:stepk-fin}
  \max \bigg\{\,1+2\sum_{S \in I_{=2}} \nu_S : \nu \in \R^{I_k \setminus 
  \{\emptyset\}}_{\geq 0}, \ \sum_{S \in I_{=1}} \nu_S = 1,\text{ and
  } M_Q(\nu) \succeq 0  \text{ for } Q \in I_{k-2}\,\bigg\}.
\end{align}

We extend this problem to infinite graphs in the same way that the
Lasserre hierarchy is extended by de Laat and Vallentin~\cite{laat15}.
This extension can be carried out for \textit{compact topological
  packing graphs}; these are graphs whose vertex sets are compact
Hausdorff spaces and in which every finite clique is contained in an
open clique. The independence number of a compact topological packing
graph is finite and~$I_k$, considered with the topology inherited from
$V$, is the disjoint union of the compact and open sets~$I_{=s}$ for
$s = 0$, \dots,~$k$ \cite[Section 2]{laat15}.

The extension relies on the theory of conic optimization over
infinite-dimensional spaces presented e.g.\ by
Barvinok~\cite{barvinok02}. The first step is to introduce the spaces
for the variables of our problem; we will use both the
space~$\Ccal(X)$ of continuous real-valued functions on a compact
space~$X$ and its topological dual (with respect to the supremum norm)
$\Mcal(X)$, the space of signed Radon measures.

In the infinite setting, the nonnegative variable~$\nu$
from~\eqref{prog:stepk-fin} becomes a measure in the dual of the cone
$\Ccal(I_k \setminus \{\emptyset\})_{\geq 0}$ of continuous and
nonnegative functions, namely
\[
  \Mcal(I_k \setminus \{\emptyset\})_{\geq 0} = \{\,\nu \in \Mcal(I_k 
\setminus \{\emptyset\}) : \nu(f) \geq 0 \text{ for all } f \in \Ccal(I_k 
\setminus \{\emptyset\})_{\geq 0}\,\};
\]
when~$V$ is finite, $\Mcal(I_k \setminus \{\emptyset\})_{\geq 0}$ can
be identified with $\R^{I_k \setminus \{\emptyset\}}_{\geq 0}$.

Let $\Ccal(V^2 \times I_{k-2})_{\sym}$ be the set of continuous
real-valued functions on $V^2 \times I_{k-2}$ that are symmetric in
the first two coordinates and let $\Mcal(V^2 \times I_{k-2})_{\sym}$
be the space of symmetric and signed Radon measures\footnote{A measure
  $\mu \in \Mcal(V^2 \times I_{k-2})$ is \emph{symmetric} if
  $\mu(E \times E' \times C) = \mu(E' \times E \times C)$ for all
  Borel sets $E$, $E' \subseteq V$ and $C \subseteq I_{k-2}$.}. A
kernel~$K \in \Ccal(V^2)$ is \textit{positive} if for every
finite~$U \subseteq V$ the matrix~$\bigl(K(x, y)\bigr)_{x,y \in U}$ is
positive semidefinite. A function~$T \in \Ccal(V^2 \times I_{k-2})$ is
\textit{positive} if for every~$Q \in I_{k-2}$ the
kernel~$(x, y) \mapsto T(x, y, Q)$ is positive. The set of all
positive functions in~$\Ccal(V^2 \times I_{k-2})$ is a convex cone
denoted by $\Ccal(V^2 \times I_{k-2})_{\succeq 0}$; its dual cone is
denoted by $\Mcal(V^2 \times I_{k-2})_{\succeq 0}$.

Define the operator
$B_k \colon \Ccal(V^2 \times I_{k-2})_{\sym} \to \Ccal(I_k \setminus
\{\emptyset\})$ by
\begin{equation}\label{eq:operatorBk}
B_k T(S) = \sum_{\substack{Q \subseteq S \\ |Q| \leq k-2}} \sum_{\substack{x, y 
\in S \\ Q \cup \{x, y\} = S}} T(x, y, Q).
\end{equation}
Note that, though the number of summands in \eqref{eq:operatorBk}
varies with the size of $S$, the function~$B_k T$ is still continuous
since, by the assumption that~$G$ is a topological packing graph,
$I_k \setminus \{\emptyset\}$ can be written as the disjoint union of
the compact and open subsets $I_{=s}$ for $s = 1$, \dots,~$k$ and
$B_kT$ is continuous in each of these parts. When~$V$ is finite, $B_k$
is the adjoint of the operator that maps
$\nu \in \R^{I_k \setminus \{\emptyset\}}_{\geq 0}$ to the tuple of
matrices $\big(M_Q(\nu)\big)_{Q \in I_{k-2}}$, since the inner product
between $(x,y,Q) \mapsto M_Q(\nu)(x,y)$ and $T$ is equal to the inner
product between $\nu$ and~$B_kT$:
\begin{align*}
  \sum_{Q \in I_{k-2}} \sum_{x,y \in V} M_Q(\nu)(x,y) T(x,y,Q) 
  &= \sum_{Q \in I_{k-2}} \; \sum_{\substack{x,y \in V \\ Q \cup
  \{x,y\} \in I_k}} \nu_{Q \cup \{x,y\}} T(x,y,Q)\\ 
  &= \sum_{S \in I_k\setminus \{\emptyset\}} \nu_{S} B_kT(S).
\end{align*}
So, when~$V$ is finite, we may rewrite the constraints
`$M_Q(\nu) \succeq 0$ for all~$Q \in I_{k-2}$' from
\eqref{prog:stepk-fin} as
`$B_k^*\nu \in \Mcal(V^2 \times I_{k-2})_{\succeq 0}$'.

This last observation leads us naturally to the generalized $k$-point
bound. Indeed, when~$G = (V, E)$ is a compact topological packing
graph, since the number of summands in \eqref{eq:operatorBk} is
bounded by a constant depending only on~$k$, the operator $B_k$ is
continuous. Thus it has an adjoint
$B_k^* \colon \Mcal(I_k \setminus \{\emptyset\}) \to \Mcal(V^2 \times
I_{k-2})_{\sym}$.  Using the adjoint, we define the
\textit{generalized $k$-point bound} for~$k \geq 2$:
\begin{multline}
 \label{prog:k-point-definition}
\Delta_k(G) = \sup \{\,1 + 2\nu(I_{=2}) : \nu \in \Mcal(I_k \setminus
\{\emptyset\})_{\geq 0},\\
\nu(I_{=1}) = 1,\text{ and } B_k^*\nu \in \Mcal(V^2 \times
I_{k-2})_{\succeq 0}\, \}.
\end{multline}
For a finite graph with the discrete topology this reduces to
\eqref{prog:stepk-fin}.

Using the duality theory of conic optimization as described e.g.\ by
Barvinok~\cite[Chapter IV]{barvinok02}, we can derive the following
dual problem for~\eqref{prog:k-point-definition}:
\begin{equation}\label{prog:stepk-top-dual}
  \Delta_k(G)^* = \inf \{\,1 + \lambda : \lambda \in \R,\ T \in
  \Ccal(V^2 \times I_{k-2})_{\succeq 0},\text{ and } B_kT \leq \lambda
  \chi_{I_{=1}} - 2 \chi_{I_{=2}}\,\}, 
\end{equation}
where $\chi_{I_{=1}}$ and $\chi_{I_{=2}}$ are the characteristic
functions of $I_{=1}$ and $I_{=2}$, which are continuous since $G$ is
a topological packing graph. From now on, we will denote both the
optimal value of~\eqref{prog:stepk-top-dual} and the optimization
problem itself by~$\Delta_k(G)^*$.

It is a direct consequence of weak duality that $\Delta_k(G)^*$ is an
upper bound for the independence number of $G$, but it is instructive
to see a direct proof.

\begin{proposition}\label{prop:upperbound}
  If $G = (V,E)$ is a compact topological packing graph, then
  $\alpha(G) \leq \Delta_k(G)^*$.
\end{proposition}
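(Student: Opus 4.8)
The plan is to supply the direct proof promised just above the statement, working with the dual problem~\eqref{prog:stepk-top-dual}. Since~$G$ is a compact topological packing graph, its independence number is finite, so I would begin by fixing an independent set~$C \subseteq V$ with~$|C| = \alpha(G)$; we may assume~$\alpha(G) \geq 1$ (otherwise there is nothing to prove). Let~$(\lambda, T)$ be an arbitrary feasible solution of~$\Delta_k(G)^*$. The goal is to prove~$|C| \leq 1 + \lambda$, since the proposition follows by taking the infimum over all such~$(\lambda, T)$.

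The main step is to test the dual inequality~$B_k T \leq \lambda \chi_{I_{=1}} - 2\chi_{I_{=2}}$ against~$C$ by summing it over all nonempty subsets~$S \subseteq C$ with~$|S| \leq k$; this is allowed because every subset of the independent set~$C$ is itself independent, hence lies in the domain~$I_k \setminus \{\emptyset\}$ of~$B_k T$. On the right-hand side only the subsets of size~$1$ and~$2$ contribute, and both sizes are~$\leq k$ since~$k \geq 2$, so this sum equals
\[
  \lambda |C| - 2\binom{|C|}{2} = \lambda |C| - |C|(|C| - 1) = |C|\,(\lambda + 1 - |C|).
\]
For the left-hand side I would expand the definition~\eqref{eq:operatorBk} of~$B_k$ and interchange the order of summation: each summand~$T(x, y, Q)$ appearing there is indexed by a triple~$(x, y, Q)$ with~$x, y \in C$, $Q \subseteq C$, and~$|Q| \leq k - 2$ --- the corresponding set being~$S = Q \cup \{x, y\}$ --- and conversely every such triple appears exactly once, so that
\[
  \sum_{\substack{S \subseteq C \\ 1 \leq |S| \leq k}} B_k T(S) = \sum_{\substack{Q \subseteq C \\ |Q| \leq k-2}}\ \sum_{x, y \in C} T(x, y, Q).
\]

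The argument then finishes with positivity of~$T$: for each~$Q \subseteq C$ with~$|Q| \leq k - 2$ the set~$Q$ is independent, hence~$Q \in I_{k-2}$, so the matrix~$\bigl(T(x, y, Q)\bigr)_{x, y \in C}$ is positive semidefinite --- here the finiteness of~$C$ matters, so that positivity of~$T$ applies to it --- and therefore the sum of all its entries~$\sum_{x, y \in C} T(x, y, Q)$ is nonnegative. Summing over~$Q$ shows that the left-hand side of the last display is nonnegative, and comparing with the previous display gives~$0 \leq |C|\,(\lambda + 1 - |C|)$; dividing by~$|C| = \alpha(G) \geq 1$ yields~$\alpha(G) \leq 1 + \lambda$, as required.

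I expect the only real care to be in the bookkeeping for the interchange of summation, and specifically in the size constraints: one sums~$B_k T(S)$ only over subsets~$S$ with~$|S| \leq k$, so that every index stays in the domain of~$B_k$, whereas the right-hand side of the dual inequality only sees the sizes~$1$ and~$2$; this mismatch is exactly what makes the bound valid even when~$\alpha(G)$ is much larger than~$k$. The remaining steps are a routine unwinding of the definitions. Conceptually the whole computation is nothing but weak duality made explicit: the measure on~$I_k \setminus \{\emptyset\}$ that assigns mass~$1/|C|$ to each nonempty subset of~$C$ of size at most~$k$ is a feasible solution of the primal problem~\eqref{prog:k-point-definition} with objective value~$|C|$, and pairing it against~$(\lambda, T)$ via the adjoint relation defining~$B_k^*$ reproduces precisely the inequality obtained above.
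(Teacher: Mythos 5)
Your proof is correct and follows essentially the same route as the paper's: sum the dual inequality $B_kT \leq \lambda\chi_{I_{=1}} - 2\chi_{I_{=2}}$ over nonempty subsets of an independent set $C$ of size at most $k$, interchange summation on the left to reduce to positivity of $T$ over all $Q \subseteq C$ with $|Q| \leq k-2$, and compare with the right-hand side $|C|(1+\lambda-|C|)$. Your concluding remark that this is precisely weak duality paired against the uniform measure of mass $1/|C|$ on nonempty subsets of $C$ of size at most $k$ is a correct and illuminating reformulation, but the underlying computation is the one in the paper.
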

\begin{proof}
  Let $C \subseteq V$ be a nonempty independent set and let
  $(\lambda, T)$ be a feasible solution of $\Delta_k(G)^*$.  On the
  one hand, since $B_kT \leq \lambda \chi_{I_{=1}} - 2 \chi_{I_{=2}}$, we have
\begin{equation*}
 \sum_{\substack{S \subseteq C\\ |S| \leq k,\ S \neq \emptyset}} B_{k}T(S) \leq 
\binom{|C|}{1}\lambda + \binom{|C|}{2}(-2) = |C|(1 + \lambda - |C|).
\end{equation*}
On the other hand, since
$T \in \Ccal(V^2 \times I_{k-2})_{\succeq 0}$, we have
\begin{equation*}
\begin{aligned}
\sum_{\substack{S \subseteq C\\ |S| \leq k,\ S \neq \emptyset}} B_{k}T(S) 
&= \sum_{\substack{S \subseteq C\\ |S| \leq k,\ S \neq \emptyset}}\;
\sum_{\substack{Q \subseteq S \\ |Q| \leq k-2}}\; \sum_{\substack{x, y 
\in S \\ Q \cup \{x, y\} = S}} T(x,y,Q)\\ 
&= \sum_{\substack{Q \subseteq C\\ |Q| \leq k-2}}\; 
\sum_{x, y \in C} T(x,y,Q) \geq 0
\end{aligned}
\end{equation*}
since, by the definition of $\Ccal(V^2 \times I_{k-2})_{\succeq 0}$,
the matrices $\bigl(T(x,y,Q)\bigr)_{x,y\in C}$ are positive
semidefinite for all $Q \in I_{k-2}$. Putting it all together we get
$|C| \leq 1 + \lambda$.
\end{proof}

\section{Parameterizing the variables by positive semidefinite matrices}
\label{sec:symmetry}

Symmetry reduction plays a key role in the computation of
$\Delta_k(G)^*$ via semidefinite programming. We now see how to
exploit symmetry to parameterize the variable~$T$
of~\eqref{prog:stepk-top-dual} in terms of positive semidefinite
matrices, an essential step in the solution of these optimization
problems by computer.

\subsection{Symmetry reduction}

Let $\Gamma$ be a compact group that acts continuously on~$V$ and that
is a subgroup of the automorphism group\footnote{The
  \textit{automorphism group} $\mathrm{Aut}(G)$ of a graph
  $G = (V, E)$ is the group of permutations~$\sigma\colon V \to V$
  that respect the adjacency relation, that is, $\sigma(x)$ and
  $\sigma(y)$ are adjacent if and only if~$x$ and $y \in V$ are
  adjacent.} of the graph $G$. The group~$\Gamma$ acts coordinatewise
on~$V^2$, and this action extends to an action on~$\Ccal(V^2)$ by
\[
(\gamma K)(x, y) = K(\gamma^{-1} x, \gamma^{-1} y).
\]
The group $\Gamma$ acts continuously on $I_t$ by
\[
  \gamma \emptyset = \emptyset \quad \text{and} \quad \gamma
  \{x_1,\ldots,x_t\} = \{\gamma x_1,\ldots,\gamma x_t\},
\]
and hence it also acts on~$\Ccal(V^2 \times I_{k-2})_{\sym}$ by
\[
(\gamma T)(x, y, S) = T(\gamma^{-1} x, \gamma^{-1} y, \gamma^{-1} S).
\]

If $\Gamma$ acts on a set $X$, we denote by $X^\Gamma$ the set of
elements of~$X$ that are invariant under this action. In this way we
write $\Ccal(V^2)^{\Gamma}$,
$\Ccal(V^2 \times I_{k-2})_{\succeq 0}^{\Gamma}$, etc.

Given a feasible solution $(\lambda, T)$ of $\Delta_k(G)^*$, the pair
$(\lambda, \overline{T})$ with
\[
\overline{T}(x,y,S)=\int_\Gamma T(\gamma^{-1} x, \gamma^{-1} y,
\gamma^{-1} S) \, \diff\gamma, 
\]
where we integrate against the Haar measure on~$\Gamma$ normalized so
that the total measure is~1, is also feasible with the same objective
value. So we may assume that~$T$ is invariant under the action
of~$\Gamma$.

Let $\Rcal_{k-2}$ be a complete set of representatives of the orbits
of $I_{k-2}/\Gamma$. For $R \in \Rcal_{k-2}$, let
$\Stab{R}{\Gamma} = \{\,\gamma \in \Gamma : \gamma R = R\,\}$ be the
stabilizer of $R$ with respect to $\Gamma$ and, for $Q \in \Gamma R$,
let $\gamma_Q \in \Gamma$ be a group element such that
$\gamma_Q Q = R$. When $I_{k-2}/\Gamma$ is finite, we can decompose
the space $\Ccal(V^2 \times I_{k-2})^\Gamma$ as a direct sum of
simpler spaces.

\begin{proposition}\label{prop:isomorphism}
If $I_{k-2}/\Gamma$ is finite, then
\[
\Psi \colon \bigoplus_{R \in \Rcal_{k-2}}
\Ccal(V^2)^{\Stab{R}{\Gamma}} \to \Ccal(V^2 \times I_{k-2})^\Gamma 
\]
given by
\begin{equation*}
\Psi ((K_R)_{R \in \Rcal_{k-2}})(x, y, Q) = K_{\gamma_Q Q}(\gamma_Q
x,\gamma_Q y) 
\end{equation*}
is an isomorphism that moreover preserves positivity, that is,
if~$(K_R)_{R \in \Rcal_{k-2}}$ is such that~$K_R$ is a positive kernel
for each~$R$, then~$\Psi((K_R)_{R \in \Rcal_{k-2}})$ is positive.
\end{proposition}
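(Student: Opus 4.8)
The plan is to check that $\Psi$ is a well-defined linear map, to exhibit an explicit two-sided inverse, and finally to verify that positivity is respected; the only genuinely nontrivial point is the continuity of the function $\Psi\bigl((K_R)_{R\in\Rcal_{k-2}}\bigr)$.

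First I would observe that the defining formula does not depend on the choices $\gamma_Q$: if $\gamma_Q'$ is another element with $\gamma_Q'Q=R$, then $\sigma:=\gamma_Q'\gamma_Q^{-1}$ fixes $R$, hence lies in $\Stab{R}{\Gamma}$, and $\Stab{R}{\Gamma}$-invariance of $K_R$ gives $K_R(\gamma_Q'x,\gamma_Q'y)=K_R(\sigma\gamma_Qx,\sigma\gamma_Qy)=K_R(\gamma_Qx,\gamma_Qy)$. The same computation, applied with $\gamma_Q\gamma$ in place of $\gamma_{\gamma^{-1}Q}$, shows that $T:=\Psi\bigl((K_R)_R\bigr)$ is $\Gamma$-invariant, since $T(\gamma^{-1}x,\gamma^{-1}y,\gamma^{-1}Q)=K_R(\gamma_Qx,\gamma_Qy)=T(x,y,Q)$. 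Linearity is clear, and the natural candidate inverse sends $T\in\Ccal(V^2\times I_{k-2})^\Gamma$ to the tuple $(K_R)_R$ with $K_R(x,y)=T(x,y,R)$; here one uses $\sigma R=R$ for $\sigma\in\Stab{R}{\Gamma}$ together with $\Gamma$-invariance of $T$ to see $K_R\in\Ccal(V^2)^{\Stab{R}{\Gamma}}$, and the two composites are the identity because $\gamma_R=e$ is an admissible choice (so $\Psi((K_R)_R)(x,y,R)=K_R(x,y)$) and because $T(\gamma_Qx,\gamma_Qy,\gamma_QQ)=T(x,y,Q)$.

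The hard part is the continuity of $T=\Psi\bigl((K_R)_R\bigr)$, because the assignment $Q\mapsto\gamma_Q$ need not be continuous. I would argue one orbit at a time. Fix $R\in\Rcal_{k-2}$ and let $\pi_R\colon\Gamma\to\Gamma R$, $\gamma\mapsto\gamma^{-1}R$; this is an open continuous surjection (a standard property of continuous actions of compact groups), hence a quotient map, and so $\pi_R\times\mathrm{id}_{V^2}\colon\Gamma\times V^2\to\Gamma R\times V^2$ is a quotient map as well, being again open. The continuous function $(\gamma,x,y)\mapsto K_R(\gamma x,\gamma y)$ on $\Gamma\times V^2$ is, by $\Stab{R}{\Gamma}$-invariance of $K_R$, constant on the fibres of $\pi_R\times\mathrm{id}_{V^2}$, hence descends to a continuous function on $\Gamma R\times V^2$; by the independence observation above this descended function is exactly the restriction of $T$ to $V^2\times\Gamma R$. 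Finally, $I_{k-2}/\Gamma$ being finite and each orbit $\Gamma R$ compact (a continuous image of $\Gamma$) and therefore closed in the Hausdorff space $I_{k-2}$, the family $\{V^2\times\Gamma R : R\in\Rcal_{k-2}\}$ is a finite partition of $V^2\times I_{k-2}$ into clopen sets; continuity of $T$ on each of them gives continuity of $T$.

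It remains to treat positivity. If each $K_R$ is a positive kernel and $Q\in I_{k-2}$ has representative $R=\gamma_QQ$, then for every finite $U\subseteq V$ the matrix $\bigl(T(x,y,Q)\bigr)_{x,y\in U}=\bigl(K_R(\gamma_Qx,\gamma_Qy)\bigr)_{x,y\in U}$ is obtained from $\bigl(K_R(x',y')\bigr)_{x',y'\in\gamma_QU}$ by the bijective relabelling $x\mapsto\gamma_Qx$, hence is positive semidefinite; thus $T\in\Ccal(V^2\times I_{k-2})_{\succeq0}^\Gamma$. The converse inclusion, which is what one actually uses to parameterise $T$ afterwards, is immediate upon taking $Q=R$ and $\gamma_R=e$, so $\Psi$ restricts to an isomorphism of the corresponding cones.
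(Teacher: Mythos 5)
Your proof is correct, but it takes a genuinely different route from the paper. The paper's proof works one level higher: it first constructs an explicit homeomorphism
\[
\psi \colon \coprod_{R \in \Rcal_{k-2}} V^2/\Stab{R}{\Gamma} \to (V^2 \times I_{k-2})/\Gamma ,
\]
using that the domain is compact (finitely many orbits, each $V^2/\Stab{R}{\Gamma}$ a quotient of a compact space) and the codomain Hausdorff, so that a continuous bijection is automatically a homeomorphism. Then $\Psi$ is identified with $f \mapsto f \circ \psi^{-1}$ under the standard isomorphisms $\Ccal(X/\Gamma) \simeq \Ccal(X)^\Gamma$, and the isomorphism of function spaces comes for free from functoriality of $\Ccal(-)$, with no separate continuity check needed. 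Your argument instead stays at the level of functions throughout: you verify well-definedness and $\Gamma$-invariance directly, give the explicit inverse $T \mapsto (T(\cdot,\cdot,R))_R$, and then do the substantive work of proving continuity of $\Psi((K_R)_R)$ by hand, via openness of the orbit map $\Gamma \to \Gamma R$ (making $\pi_R \times \mathrm{id}_{V^2}$ a quotient map) plus the observation that the finitely many orbits of $I_{k-2}$ are a clopen partition. These are the same underlying topological facts that power the paper's homeomorphism, just deployed pointwise rather than once and for all; the paper's version is shorter and cleaner after the homeomorphism is set up, whereas yours is more elementary and never needs to compare compact Hausdorff spaces (indeed, you only invoke compactness of $\Gamma$ and Hausdorffness of $I_{k-2}$, not compactness of $V$). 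Your positivity verification is the same as the paper's, just written out: $(T(x,y,Q))_{x,y\in U}$ is a relabelling of $(K_R(x',y'))_{x',y'\in\gamma_Q U}$, hence positive semidefinite.
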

\begin{proof}
We first show that $(V^2 \times I_{k-2}) / \Gamma$ is homeomorphic 
to the coproduct
\[
\coprod_{R \in \Rcal_{k-2}} V^2 / \Stab{R}{\Gamma},
\]
which is the union of $(V^2 / \Stab{R}{\Gamma}) \times \{R\}$ over all
$R \in \Rcal_{k-2}$, endowed with the disjoint union topology. More
precisely, we show that
\[
\psi \colon \coprod_{R \in \Rcal_{k-2}} V^2 /\Stab{R}{\Gamma} \to (V^2
\times I_{k-2}) / \Gamma
\]
given by $\psi(\Stab{R}{\Gamma}(x,y),R) = \Gamma(x,y,R)$ is such a
homeomorphism with inverse
\begin{equation}
  \label{eqpsiinverse}
\psi^{-1}(\Gamma(x,y,Q)) = (\Stab{\gamma_Q
  Q}{\Gamma}(\gamma_Q x, \gamma_Q y), \gamma_QQ). 
\end{equation}

Indeed, the map $\psi$ is well defined because
$\Gamma(x,y,R) = \Gamma(\gamma x, \gamma y, R)$ for all $\gamma$ in
$\Stab{R}{\Gamma}$. For each $R \in \mathcal R_{k-2}$, the map
$\psi_R \colon V^2 / \Stab{R}{\Gamma} \to (V^2 \times I_{k-2}) /
\Gamma$ given by
\[
\psi_R\big(\Stab{R}{\Gamma}(x,y)\big) = \Gamma(x,y,R)
\]
is continuous, as follows from the definition of quotient topology. By
the definition of disjoint union topology on the coproduct this
implies $\psi$ is continuous.

The map \eqref{eqpsiinverse} is well defined, for if we replace
$\gamma_Q$ by $\xi\gamma_Q$, where
$\xi \in \Stab{\gamma_Q Q}{\Gamma}$, then the right-hand side of
\eqref{eqpsiinverse} does not change. Direct verification shows
$\psi^{-1} \circ \psi$ and $\psi \circ \psi^{-1}$ are the identity
maps.

Since $\mathcal R_{k-2}$ is finite, the domain of $\psi$ is
compact. So $\psi$ is a continuous bijection between compact Hausdorff
spaces, and hence a homeomorphism.

Now the proposition follows easily. Under the isomorphisms
\[
\Ccal\biggl(\coprod_{R \in \mathcal R_{k-2}} V^2 /\Stab{R}{\Gamma}\biggr) 
\simeq \bigoplus_{R \in \mathcal R_{k-2}} \Ccal(V^2)^{\Stab{R}{\Gamma}}
\]
and
\[
\Ccal((V^2 \times I_{k-2}) / \Gamma) \simeq \Ccal(V^2 \times I_{k-2})^\Gamma,
\]
the operator $\Psi$ is equal to
\[
\Ccal\biggl(\coprod_{R \in \mathcal R_{k-2}} V^2 /\Stab{R}{\Gamma}\biggr) \to 
\Ccal((V^2 \times I_{k-2}) / \Gamma), \qquad f \mapsto f \circ \psi^{-1},
\]
which is a well-defined isomorphism since $\psi$ is a
homeomorphism. Finally, it follows directly from the definitions of
positive kernels and~$\Ccal(V^2 \times I_{k-2})^\Gamma_{\succeq 0}$
that~$\Psi$ preserves positivity.
\end{proof}

The above proposition shows that to characterize
$\Ccal(V^2 \times I_{k-2})^\Gamma$ we need to characterize the sets
$\Ccal(V^2)^{\Stab{R}{\Gamma}}$ for~$R \in \Rcal_{k-2}$. In the next
section we give this characterization for the case of spherical
symmetry.

\subsection{Parameterizing invariant kernels on the sphere}
\label{sec:parameterizing} 

From now on we assume $G = (V,E)$ is the graph where $V = S^{n-1}$ and
where two distinct vertices $x$, $y \in S^{n-1}$ are adjacent if
$x \cdot y \notin D$ for some closed $D \subseteq [-1, 1)$. Taking
$\Gamma = \On$, we are in the situation described above. Let us see
how to parameterize the cones
\[
\Ccal(S^{n-1} \times S^{n-1})_{\succeq 0}^{\Stab{R}{\On}} \quad \text{for }
R \in \Rcal_{k-2}
\]
by positive semidefinite matrices. For notational simplicity, we make
the assumption that every $R \in \Rcal_{k-2}$ consists of linearly
independent vectors, which is true for all $G$ and $k$ considered in
this paper.

Let $\{e_1,\ldots,e_n\}$ be the standard basis of $\R^n$ and fix
$R \in \Rcal_{k-2}$. By rotating a set~$R \in \Rcal_{k-2}$ if
necessary, we may assume that $R$ is contained in
$\spann(\{e_1,\ldots, e_m\})$, where $m = \dim(\spann(R))$. The
stabilizer subgroup of $\Orm(n)$ with respect to $R$ is isomorphic to
the direct product of two groups, namely
\[
\Stab{R}{\Orm(n)} \simeq \Scal_R \times \Stab{\spann(R)}{\Orm(n)},
\]
where $\Scal_R$ is isomorphic to a finite subgroup of $\Orm(m)$ that
acts on the first $m$ coordinates and acts on $R$ as a permutation of
its elements and $\Stab{\spann(R)}{\On}$ is a group isomorphic to
$\Orm(n-m)$ that acts on the last $n-m$ coordinates.

If $k=2$, then $R = \emptyset$ and $\Stab{\spann(R)}{\On} = \On$. By a
classical result of Schoenberg~\cite{schoenberg42}, each positive
$\On$-invariant kernel $K \colon S^{n-1} \times S^{n-1} \to \R$ is of
the form
\[
K(x,y) = \sum_{l=0}^\infty a_l P_l^n(x \cdot y)
\]
for some nonnegative numbers $a_0$, $a_1$, \dots\ with absolute and
uniform convergence, where $P_l^n$ is the Gegenbauer polynomial of
degree $l$ in dimension $n$ normalized so that $P_l^n(1) = 1$
(equivalently, $P_l^n$ is the Jacobi polynomial with both parameters
equal to $(n-3)/2$).

Kernels invariant under the stabilizer of one point were considered by
Bachoc and Vallentin~\cite{bachoc08} and kernels invariant under the
stabilizer of more points were considered by Musin~\cite{musin14}. The
analogue of Schoenberg's theorem for kernels invariant under the
stabilizer of one or more points is stated in terms of multivariate
Gegenbauer polynomials.

For $0 \leq m \leq n - 2$, $t \in \R$, and $u$, $v \in \R^m$, the
\emph{multivariate Gegenbauer polynomial}~$P_l^{n,m}$ is the
$(2m+1)$-variable polynomial
\[ 
  P_l^{n, m}(t,u,v) =
  \big((1-\|u\|^2)(1-\|v\|^2)\big)^{l/2}P_l^{n-m}\bigg(\frac{t-u \cdot
    v}{\sqrt{(1-\|u\|^2)(1-\|v\|^2)}}\bigg),
\]
where $\|v\| = \sqrt{v \cdot v}$. If we use the convention
$\R^0 = \{0\}$, then $P_l^n(t) = P_l^{n,0}(t,0,0)$.  Since the
Gegenbauer polynomials are odd or even according to the parity of $l$,
the function $P_l^{n, m}(t,u,v)$ is indeed a polynomial in the
variables $u$, $v$, and $t$. Musin~\cite{musin14} denotes $P_l^{n,m}$
by $G_l^{(n,m)}$ and Bachoc and Vallentin~\cite{bachoc08} denote
$P_l^{n,1}$ by $Q_l^{n-1}$.

Fix $d \geq 0$, let $\Bcal_l$ be a basis of the space of $m$-variable
polynomials of degree at most~$l$ (e.g.\ the monomial basis), and
write $z_l(u)$ for the column vector containing the polynomials in
$\Bcal_l$ evaluated at $u \in \R^m$. Let $Y_l^{n, m}$ be the matrix of
polynomials
\[
  Y_l^{n,m}(t,u,v) = P_l^{n,m}(t,u,v) z_{d-l}(u)z_{d-l}(v)^{\sf T}.
\]
The choice of $d$ makes $Y_l^{n,m}$ a
$\binom{d-l+m}{m} \times \binom{d-l+m}{m}$ matrix with
$(2m+1)$-variable polynomials of degree at most~$2d$ as its entries.

Given a matrix $X$ with linearly independent columns, set
$L(X) = B^{-1} X^{\sf T}$, where $B$ is the matrix such that
$BB^{\sf T}$ is the Cholesky factorization of $X^{\sf T}X$, which is
unique since $X^{\sf T}X$ is positive definite. For each
$R \in \mathcal R_{k-2}$, fix a matrix $A_R$ whose columns are the
vectors of $R$ in some order. The rows of $L(A_R)$ span the same space
as the columns of $A_R$ because $B$ is invertible, and by construction
the rows of $L(A_R)$ are orthonormal:
\[
  L(A_R) L(A_R)^{\sf T} = B^{-1} A_R^{\sf T} A_R B^{-\sf T} = B^{-1}
  BB^{\sf T} B^{-\sf T} = I.
\]
Therefore, for $x \in \R^n$, $L(A_R)x$ is a vector with the
coordinates of the projection of~$x$ onto $\spann(R)$ with respect to
an orthonormal basis of the linear span.

The following theorem is a restatement of a result of
Musin~\cite[Corollary 3.2]{musin14} in terms of invariant kernels and
with adapted notation. We will use only the sufficiency part of the
statement, proved in Appendix~\ref{ap:gegenbauer} for completeness.

For square matrices $A$, $B$ of the same dimensions, write
$\langle A, B \rangle = \tr(A^{\sf T}B)$ for their trace product.

\begin{theorem}\label{teo:main-charac}
  Let $R \subseteq S^{n-1}$ with $m = \dim(\spann(R)) = |R| \leq n-2$
  and let $A_R$ be a matrix whose columns are the vectors of $R$ in
  some order. Fix $d \geq 0$ and, for each $0 \leq l \leq d$, let
  $F_l$ be a positive semidefinite matrix of size
  $\binom{d-l+m}{m} \times \binom{d-l+m}{m}$. Then
  $K \colon S^{n-1} \times S^{n-1} \to \R$ given by
\begin{equation}\label{eq:K-thm}
 K(x,y) = \sum_{l = 0}^d \big\langle F_l, Y_l^{n,m}(x \cdot y, L(A_R) x, 
L(A_R) y) \big\rangle
\end{equation}
is a positive, continuous, and $\Stab{\spann(R)}{\On}$-invariant
kernel.  Conversely, every $\Stab{\spann(R)}{\On}$-invariant kernel
$K \in \Ccal(S^{n-1} \times S^{n-1})_{\succeq 0}$ can be uniformly
approximated by kernels of the above form.
\end{theorem}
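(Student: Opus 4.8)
The plan is to prove the sufficiency direction (the only part used later) directly, and then indicate how the converse follows by the classical invariant-theory and approximation arguments. For sufficiency, fix $R$ with $m=\dim(\spann(R))=|R|\le n-2$, write $\pi_R x = L(A_R)x$ for the coordinate vector of the orthogonal projection of $x$ onto $\spann(R)$ in the orthonormal basis given by the rows of $L(A_R)$, and note the decomposition $x = \pi_R x \oplus x^\perp$ with $\|x\|^2 = \|\pi_R x\|^2 + \|x^\perp\|^2 = 1$. First I would verify $\Stab{\spann(R)}{\On}$-invariance: an element $\gamma \in \Stab{\spann(R)}{\On}$ acts as the identity on $\spann(R)$ and as an orthogonal transformation on its complement, so $\pi_R(\gamma x) = \pi_R x$, $\pi_R(\gamma y) = \pi_R y$, and $(\gamma x)\cdot(\gamma y) = x\cdot y$; hence every argument fed to $P_l^{n,m}$ and to $z_{d-l}$ in \eqref{eq:K-thm} is unchanged, so $K(\gamma x,\gamma y)=K(x,y)$. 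Continuity is clear since $K$ is a finite sum of polynomials in $x\cdot y$ and in the entries of $\pi_R x,\pi_R y$, composed with continuous maps.

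The substantive step is positivity of $K$. The key identity to establish is that $P_l^{n,m}(x\cdot y,\pi_R x,\pi_R y)$ is itself a positive kernel on $S^{n-1}$ — more precisely, that it is the "projection-and-rescale" pullback of the univariate Gegenbauer kernel on the sphere of the complementary space. Concretely, for $x$ with $\pi_R x = u$, write $x^\perp = \sqrt{1-\|u\|^2}\,\hat x$ with $\hat x \in S^{n-m-1}$ (when $\|u\|<1$); then
\[
\frac{x\cdot y - u\cdot v}{\sqrt{(1-\|u\|^2)(1-\|v\|^2)}} = \hat x \cdot \hat y,
\]
so $P_l^{n,m}(x\cdot y,u,v) = (1-\|u\|^2)^{l/2}(1-\|v\|^2)^{l/2}\,P_l^{n-m}(\hat x\cdot\hat y)$. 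By Schoenberg's theorem, $(\hat x,\hat y)\mapsto P_l^{n-m}(\hat x\cdot\hat y)$ is positive on $S^{n-m-1}$, and multiplying a positive kernel by $h(x)h(y)$ with $h(x) = (1-\|\pi_R x\|^2)^{l/2}$ preserves positivity; a limiting argument handles the boundary locus $\|u\|=1$, on which $P_l^{n,m}$ vanishes for $l\ge 1$. Consequently the matrix $Y_l^{n,m}(x\cdot y,u,v) = P_l^{n,m}(x\cdot y,u,v)\,z_{d-l}(u)z_{d-l}(v)^{\sf T}$ is, for any finite point set, a Schur (entrywise) product of the positive kernel $P_l^{n,m}$ with the rank-one positive kernel $z_{d-l}(u)z_{d-l}(v)^{\sf T}$, hence positive; and then $\langle F_l, Y_l^{n,m}\rangle$ is positive because $F_l\succeq 0$ and the trace pairing of a fixed PSD matrix with a PSD-valued kernel is again a positive kernel. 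Summing over $l$ gives positivity of $K$.

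I expect the main obstacle to be making the "complementary sphere" reparameterization rigorous as a statement about kernels rather than about individual points: the map $x\mapsto(\pi_R x,\hat x)$ is not globally well defined (it degenerates where $\|\pi_R x\|=1$), and $\hat x$ is only determined up to the action of $\Stab{\spann(R)}{\On}\cong\Orm(n-m)$. The clean way around this is to avoid choosing $\hat x$ at all: observe that for $l\ge 1$ the polynomial $P_l^{n,m}(t,u,v)$ vanishes identically when $\|u\|=1$ or $\|v\|=1$ (since $P_l^{n-m}$ is a polynomial and the prefactor $(1-\|u\|^2)^{l/2}$ forces degree-$l$ vanishing, with the apparent square root disappearing by the parity of Gegenbauer polynomials, exactly as noted in the text), so positivity on the open dense set $\{\|\pi_R x\|<1\}$ extends by continuity. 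For the converse direction I would only sketch: any $\Stab{\spann(R)}{\On}$-invariant continuous kernel is a function of the $\Orm(n-m)$-invariants $x\cdot y$, $\pi_R x$, $\pi_R y$, $\|x^\perp\|$, $\|y^\perp\|$ and the inner product $x^\perp\cdot y^\perp$ — equivalently a $\Stab{\spann(R)}{\On}$-invariant kernel in the $x^\perp,y^\perp$ variables with coefficients depending on $\pi_R x,\pi_R y$ — and applying Schoenberg's expansion fiberwise in $x^\perp\cdot y^\perp / \|x^\perp\|\|y^\perp\|$, together with a Stone–Weierstrass approximation of the $u,v$-dependent coefficients by polynomials, yields uniform approximation by kernels of the form \eqref{eq:K-thm}; the details are in Musin~\cite{musin14} and the sufficiency part in Appendix~\ref{ap:gegenbauer}.
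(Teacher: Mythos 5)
Your argument is essentially the one the paper gives in Appendix~\ref{ap:gegenbauer}: decompose $x$ into its component in $\spann(R)$ and its normalized complement $\hat{x}\in S^{n-m-1}$, pull back Schoenberg's theorem along that map, Schur-multiply by the rank-one kernel $(1-\|u\|^2)^{l/2}(1-\|v\|^2)^{l/2}$, and then absorb $z_{d-l}(u)z_{d-l}(v)^{\sf T}$ and the trace pairing with $F_l$ by another Schur/factorization step. One small correction: the polynomial $P_l^{n,m}(t,u,v)$ does \emph{not} vanish identically on $\{\|u\|=1\}$ for $l\ge 1$ (e.g.\ $P_1^{n,m}=t-u\cdot v$, which is nonzero there as a polynomial); what is true, and what the paper uses, is that on the constraint set $(t,u,v)=(x\cdot y,\pi_R x,\pi_R y)$ with $x,y\in S^{n-1}$ one has $t-u\cdot v=x^\perp\cdot y^\perp=0$ whenever $x\in\spann(R)$, so the restriction to the sphere vanishes and your limiting argument for the boundary locus goes through (the paper instead verifies the factorization $\|x_2\|^l\|y_2\|^lP_l^{n-m}(\bar x\cdot\bar y)=P_l^{n,m}$ directly on the boundary, avoiding the limit). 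The converse direction is only sketched in your proposal, which is consistent with the paper, which also defers that direction to Musin.
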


Theorem~\ref{teo:main-charac} gives us a parameterization of
$\Stab{\spann(R)}{\On}$-invariant kernels. To get a parameterization
of $\Stab{R}{\On}$-invariant kernels we still have to deal with the
symmetries in $\Scal_R$. By construction, for an orthogonal matrix
$\sigma \in \Scal_R$ there is a permutation matrix $P_\sigma$ such
that $\sigma A_R = A_R P_\sigma$. Since $\sigma \in \On$ and
$A_R^{\sf T}A_R = A_R^{\sf T}\sigma^{\sf T}\sigma A_R = P_\sigma^{\sf
  T}A_R^{\sf T}A_RP_\sigma$, the elements of $\Scal_R$ correspond
precisely to the symmetries of the Gram matrix $A_R^{\sf T}A_R$ under
simultaneous permutations of rows and columns. Indeed, if the Gram
matrix $A_R^{\sf T}A_R$ is invariant under a certain simultaneous
permutation of rows and columns, then since $R$ is linearly
independent, this permutation defines a linear transformation of
$\spann(R)$ that preserves all inner products between vectors of $R$,
whence it is orthogonal and therefore corresponds to an element
of~$\Scal_R$. This observation leads to the following corollary.

\begin{corollary}\label{cor:K-inv}
  Let $R \subseteq S^{n-1}$ with $m = \dim(\spann(R)) = |R| \leq n-2$
  and let $A_R$ be a matrix whose columns are the vectors of $R$ in
  some order. Fix $d \geq 0$ and, for each $0 \leq l \leq d$, let
  $F_l$ be a positive semidefinite matrix of size
  $\binom{d-l+m}{m} \times \binom{d-l+m}{m}$. Then
  $K \colon S^{n-1} \times S^{n-1} \to \R$ given by
\begin{equation}
\label{eq:K-StabR}
K(x,y) = \sum_{l = 0}^d \big\langle F_l,\Fcal_l(R)(x,y) \big\rangle,
\end{equation}
where
\[
\Fcal_l(R)(x,y) = \frac{1}{|\Scal_R|} \sum_{\sigma \in \Scal_R}
Y_l^{n,m}(x \cdot y, L(A_R P_\sigma) x, L(A_RP_\sigma)y),
\]
is a positive, continuous, and $\Stab{R}{\On}$-invariant kernel.
\end{corollary}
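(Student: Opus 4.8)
The plan is to deduce the corollary from Theorem~\ref{teo:main-charac} applied separately to each matrix $A_R P_\sigma$, and then to average over $\sigma \in \Scal_R$. Positivity, continuity, and $\Stab{\spann(R)}{\On}$-invariance will be essentially immediate this way; the only genuinely new point is invariance under the full group $\Stab{R}{\On}$, and via the decomposition $\Stab{R}{\On} \simeq \Scal_R \times \Stab{\spann(R)}{\On}$ this reduces to checking invariance under $\Scal_R$, which in turn reduces to one identity for the maps $L(A_R P_\sigma)$.

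First I would note that for each $\sigma \in \Scal_R$ the matrix $A_R P_\sigma$ again has the vectors of $R$ as its columns (in permuted order), so it satisfies the hypotheses of Theorem~\ref{teo:main-charac} and $\spann(A_R P_\sigma) = \spann(R)$. Hence each kernel $(x,y) \mapsto \sum_{l=0}^d \langle F_l, Y_l^{n,m}(x\cdot y, L(A_R P_\sigma)x, L(A_R P_\sigma)y)\rangle$ is positive, continuous, and $\Stab{\spann(R)}{\On}$-invariant. Since $\Fcal_l(R)$ is by definition the average over $\sigma \in \Scal_R$ of the matrix polynomials $Y_l^{n,m}(x\cdot y, L(A_R P_\sigma)x, L(A_R P_\sigma)y)$, the kernel $K$ in~\eqref{eq:K-StabR} is the corresponding average of these kernels, i.e.\ a convex combination of them. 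A sum of positive kernels is positive (a sum of positive semidefinite matrices is positive semidefinite), positive scaling by $1/|\Scal_R|$ does no harm, and continuity and $\Stab{\spann(R)}{\On}$-invariance are preserved under finite sums; so $K$ is a positive, continuous, $\Stab{\spann(R)}{\On}$-invariant kernel.

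It remains to prove that $K$ is invariant under $\Scal_R$, which together with the previous step yields $\Stab{R}{\On}$-invariance. Fix $\sigma_0 \in \Scal_R$, regarded as an element of $\On$. Since $\sigma_0$ is orthogonal, $\sigma_0 x \cdot \sigma_0 y = x\cdot y$, so it is enough to establish the identity $L(A_R P_\sigma)\,\sigma_0 = L(A_R P_{\sigma_0^{-1}\sigma})$ for all $\sigma \in \Scal_R$: granting it, $Y_l^{n,m}(\sigma_0 x\cdot \sigma_0 y, L(A_R P_\sigma)\sigma_0 x, L(A_R P_\sigma)\sigma_0 y) = Y_l^{n,m}(x\cdot y, L(A_R P_{\sigma_0^{-1}\sigma})x, L(A_R P_{\sigma_0^{-1}\sigma})y)$, and summing over $\sigma \in \Scal_R$ and reindexing by $\tau = \sigma_0^{-1}\sigma$ gives $\Fcal_l(R)(\sigma_0 x, \sigma_0 y) = \Fcal_l(R)(x,y)$, hence $K(\sigma_0 x, \sigma_0 y) = K(x,y)$. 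To prove the identity I would invoke the three facts recorded before the corollary: $\sigma \mapsto P_\sigma$ is a homomorphism with $\sigma A_R = A_R P_\sigma$ and $P_{\sigma^{-1}} = P_\sigma^{\sf T}$; the Gram matrix satisfies $P_\sigma^{\sf T} A_R^{\sf T}A_R P_\sigma = A_R^{\sf T}A_R$; and $L(X) = B^{-1}X^{\sf T}$ with $BB^{\sf T}$ the (unique) Cholesky factorization of $X^{\sf T}X$. The second fact shows $(A_R P_\sigma)^{\sf T}(A_R P_\sigma) = A_R^{\sf T}A_R$ is independent of $\sigma$, so by uniqueness of Cholesky there is one matrix $B_0$ with $L(A_R P_\sigma) = B_0^{-1} P_\sigma^{\sf T} A_R^{\sf T}$ for every $\sigma$. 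Transposing $\sigma_0^{-1} A_R = A_R P_{\sigma_0^{-1}}$ gives $A_R^{\sf T}\sigma_0 = P_{\sigma_0^{-1}}^{\sf T} A_R^{\sf T} = P_{\sigma_0} A_R^{\sf T}$, so
\[
L(A_R P_\sigma)\,\sigma_0 = B_0^{-1} P_\sigma^{\sf T} A_R^{\sf T}\sigma_0 = B_0^{-1} P_\sigma^{\sf T} P_{\sigma_0} A_R^{\sf T} = B_0^{-1}\bigl(P_{\sigma_0}^{\sf T} P_\sigma\bigr)^{\sf T} A_R^{\sf T} = B_0^{-1} P_{\sigma_0^{-1}\sigma}^{\sf T} A_R^{\sf T} = L(A_R P_{\sigma_0^{-1}\sigma}).
\]

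The main obstacle is purely a matter of bookkeeping: keeping track of which side the permutation matrices act on and of the transposes, and in particular noticing the point that makes the computation collapse, namely that the Cholesky factor in $L(A_R P_\sigma)$ does not depend on $\sigma$ precisely because $A_R^{\sf T}A_R$ is $\Scal_R$-invariant. Once that observation is in hand the identity above is a one-line calculation, and everything else is handled by Theorem~\ref{teo:main-charac} together with the stability of positivity, continuity, and group-invariance under finite nonnegative combinations.
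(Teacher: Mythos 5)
Your proof is correct and takes essentially the same route as the paper: both argue positivity, continuity, and $\Stab{\spann(R)}{\On}$-invariance by applying Theorem~\ref{teo:main-charac} to each $A_R P_\sigma$, and both reduce $\Scal_R$-invariance to the same underlying identity, which hinges on the observation that the Cholesky factor of $(A_R P_\sigma)^{\sf T}(A_R P_\sigma) = A_R^{\sf T}A_R$ is independent of $\sigma$. The only cosmetic difference is that the paper expresses this identity as $L(A_R P_\sigma) = L(A_R)\sigma^{\sf T}$ and rewrites $K$ as the $\Scal_R$-symmetrization of a single kernel $K'$, whereas you phrase it as $L(A_R P_\sigma)\sigma_0 = L(A_R P_{\sigma_0^{-1}\sigma})$ and reindex the sum directly; these are equivalent one-line manipulations.
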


\begin{proof}
If~$K$ is given by~\eqref{eq:K-StabR}, then by writing
\[
  K(x,y) = \frac{1}{|\Scal_R|} \sum_{\sigma \in \Scal_R} \sum_{l =
    0}^d \big\langle F_l, Y_l^{n,m}(x \cdot y, L(A_R P_\sigma) x,
  L(A_R P_\sigma) y) \big\rangle
\]
we see using Theorem~\ref{teo:main-charac} that~$K$ is a sum of
$|\Scal_R|$ positive, continuous, and
$\mathrm{Stab}_{\On}\break(\spann(R))$-invariant kernels, and hence it is itself
positive, continuous, and $\mathrm{Stab}_{\On}\break(\spann(R))$-invariant.

Since, for every $\sigma \in \Scal_R$,
\[
  L(A_RP_{\sigma})x = B^{-1}P_{\sigma}^{\sf T}A_R^{\sf T}x =
  B^{-1}A_R^{\sf T} \sigma^{\sf T} x = L(A_R)\sigma^{\sf T} x
\]
(recall $BB^{\sf T}$ is the Cholesky decomposition of
$A_R^{\sf T} A_R = (A_R P_\sigma)^{\sf T}(A_R P_\sigma)$), and since
$x \cdot y = (\sigma^{\sf T} x) \cdot (\sigma^{\sf T} y)$, we have
that
\begin{equation}
  \label{eq:K-SR-symmetrization}
  K(x,y) = \frac{1}{|\Scal_R|} \sum_{\sigma \in \Scal_R} K'(\sigma^{\sf T} x,
  \sigma^{\sf T} y),
\end{equation}
where
\[
  K'(x,y) = \sum_{l = 0}^d \big\langle F_l, Y_l^{n,m}(x \cdot y,
  L(A_R) x, L(A_R) y) \big\rangle.
\]
Now it follows directly from~\eqref{eq:K-SR-symmetrization} that~$K$
is $\Stab{R}{\On}$-invariant.
\end{proof}

\section{Semidefinite programming formulations}
\label{sec:formulation}

Before giving the semidefinite programming formulations, let us
discuss how the matrix-valued function $\Fcal_l(R)(x,y)$ can be
computed. We have
\[
  L(A_R P_\sigma) x = B^{-1} P_\sigma^{\sf T} A_R^{\sf T} x = B^{-1}
  P_\sigma^{\sf T} (A_R^{\sf T} x),
\]
where $BB^{\sf T}$ is the Cholesky decomposition of
$A_R^{\sf T} A_R = (A_R P_\sigma)^{\sf T}(A_R P_\sigma)$. This shows
that $L(A_R P_\sigma) x$ depends only on the inner products between
the vectors in the set $R \cup \{x\}$ and on the ordering of the
columns of $A_R$. Since the size of $R$ is bounded by $k-2$, this also
shows that all computations for setting up the problem can be done in
a relatively small dimension depending on~$k$ and not on~$n$.

\subsection{An SDP formulation for spherical finite-distance sets}
\label{sec:sdp-formulation}

To write the full semidefinite programming formulation corresponding
to~\eqref{prog:stepk-top-dual}, we use Corollary~\ref{cor:K-inv}
together with the isomorphism from Proposition~\ref{prop:isomorphism}.
Let $\Scal_{\succeq 0}^N$ denote the cone of $N \times N$ positive
semidefinite matrices. If for $R \in \Rcal_{k-2}$ and $0 \leq l\leq d$
we have $F_{R, l} \in \Scal_{\succeq 0}^N$, where
$N = \binom{d-l+|R|}{|R|}$, then
$T \colon S^{n-1} \times S^{n-1} \times I_{k-2} \to \R$ given by
\vspace*{-.25cm}
\[
T(x,y,Q) = \sum_{l = 0}^d \big\langle F_{\gamma_Q Q, l}, 
\Fcal_l(\gamma_QQ)(\gamma_Qx,\gamma_Qy) \big\rangle
\vspace*{-.1cm}
\]
is a function in 
$\Ccal(S^{n-1} \times S^{n-1} \times I_{k-2})_{\succeq 0}^{\On}$ and 
hence, for $S \in \Rcal_{k} \setminus \{\emptyset\}$, the expression
for $B_k T(S)$ becomes
\begin{align*}
  B_k T(S) &= \sum_{\substack{Q \subseteq S \\ |Q| \leq k-2}} \; 
  \sum_{\substack{x, y \in S \\ \{x, y\} \cup Q = S}} T(x, y, Q)\\
           &= \sum_{\substack{Q \subseteq S \\ |Q| \leq k-2}} \; 
  \sum_{\substack{x, y \in S \\ \{x, y\} \cup Q = S}} \sum_{l = 0}^d \big\langle 
  F_{\gamma_Q Q, l},\Fcal_l(\gamma_QQ)(\gamma_Qx,\gamma_Qy) 
  \big\rangle\\
           &= \sum_{\substack{Q \subseteq S \\ |Q| \leq k-2}}\; 
  \sum_{l = 0}^d \big\langle F_{\gamma_Q Q, l}, \sum_{\substack{x, y \in S\\ 
  \{x, y\} \cup Q = S}} \Fcal_l(\gamma_Q Q)(\gamma_Q x,\gamma_Q y) 
  \big\rangle.
\end{align*}

Since the action of $\On$ on $S^{n-1} \simeq I_{=1}$ is transitive,
the quotient $I_{=1} / \On$ has only one element. We set
$\mathcal R_1 \setminus \mathcal R_0 = \{e_1\}$, where $e_1$ is the
first canonical basis vector of $\R^n$. We replace the objective
$1+\lambda$ in \eqref{prog:stepk-top-dual} by $1 + B_kT(\{e_1\})$,
which we can further simplify by noticing that $Y_0^{n,1}(1,1,1)$ is
the all-ones matrix $J_{d+1}$ of size $(d+1) \times (d+1)$ and
$Y_l^{n,1}(1,1,1)$ is the zero matrix for $l > 0$.  This gives the
semidefinite programming formulation
\begin{multline*}
  \min \bigg\{\, 1 + \sum_{l = 0}^d F_{\emptyset,l} + \langle
  F_{\{e_1\},0}, J_{d+1}\rangle : F_{R, l} \in \mathcal S_{\succeq
    0}^{\binom{d-l+|R|}{|R|}}
  \text{ for } 0 \leq l\leq d \text{ and } R \in \Rcal_{k-2},\\
  \sum_{\substack{Q \subseteq S \\ |Q| \leq k-2}} \sum_{l = 0}^d
  \big\langle F_{\gamma_Q Q, l},\mkern-13mu \sum_{\substack{x,y \in S
      \\ \{x, y\} \cup Q = S}}\mkern-15mu\Fcal_l(\gamma_Q Q)(\gamma_Q
  x,\gamma_Q y)\big\rangle \leq -2 \chi_{I_{=2}}(S) \text{ for } S \in
  \mathcal R_k \setminus \mathcal R_1\,\bigg\}.
\end{multline*}
For each fixed $d$ this gives an upper bound for $\Delta_k(G)^*$ that
converges to $\Delta_k(G)^*$ as $d$ tends to infinity.

\subsection{A precise connection between the Bachoc-Vallentin bound
  and the Lasserre hierarchy}\label{sec:connection}

The bound $\Delta_2(G)^*$ immediately reduces to the generalization of
the Lov\'asz $\vartheta$ number as given by Bachoc, Nebe, Oliveira,
and Vallentin~\cite{bachoc09}, which coincides with the LP
bound~\cite{delsarte77} after symmetry reduction. Here we show that
$\Delta_3(G)^*$ can be interpreted as a nonsymmetric version of the
Bachoc-Vallentin $3$-point bound~\cite{bachoc08}.

Suppose $T$ is feasible for $\Delta_3(G)^*$. If $S = \{a, b\}$ with
$a\neq b$, then
\begin{align*}
  B_3T(\{a, b\}) &= \sum_{\substack{Q \subseteq S\\ |Q| \leq 1}} \;
  \sum_{\substack{x, y \in S \\ Q \cup \{x,y\} = S}} T(x,y,Q)\\[0.3em]
                 &= T(a,b,\emptyset) + T(b,a,\emptyset) + T(a,b,\{a\}) + T(b,a,\{a\})\\ 
                 &\qquad + T(b,b,\{a\}) + T(a,b,\{b\}) + T(b,a,\{b\}) + T(a,a,\{b\}).
\end{align*}
By using
$T(x,y,\emptyset) = \sum_{l = 0}^d F_{\emptyset,l} P_l^n(x \cdot y)$
and
\begin{align*}
  T(x, y, \{z\}) &= \sum_{l = 0}^d\big\langle F_{\{e_1\}, l}, 
                   \Fcal_l\big(\{e_1\}\big)(\gamma_{\{z\}} x, \gamma_{\{z\}}y)\big\rangle\\
                 &= \sum_{l = 0}^d\big\langle F_{\{e_1\}, l}, Y_l^{n,1}(x \cdot y, x \cdot z, y \cdot z)\big\rangle,
\end{align*}
we see that
\[
  B_3T(\{a, b\}) = 2\sum_{l = 0}^d F_{\emptyset,l} P_l^n(a \cdot b) +
  6\sum_{l = 0}^d\big\langle F_{\{e_1\}, l}, S_l^n(a\cdot b, a\cdot b,
  1)\big\rangle,
\]
where we use the notation
$S_l^n = \frac{1}{6}\sum_{\sigma \in S_3} \sigma Y_l^{n, 1}$, in which
$\sigma$ runs through the group of all permutations of three variables
and acts on $Y_l^{n, 1}$ by permuting its arguments.

If $|S| = 3$, say $S = \{a, b, c\}$, then
\begin{align*}
  B_3T(\{a, b, c\}) &= \sum_{\substack{Q \subseteq S\\ |Q| \leq 1}}\;
  \sum_{\substack{x, y \in S\\ Q \cup \{x,y\} = S}} T(x,y,Q)\\[0.4em]
                    &= T(a,b,\{c\}) + T(b,a,\{c\}) + T(a,c,\{b\})\\
                    &\qquad + T(c,a,\{b\}) + T(b,c,\{a\}) + T(c,b,\{a\})\\
                    &= 6\sum_{l = 0}^d\big\langle F_{\{e_1\}, l}, S_l^n(a \cdot b, a \cdot c, b 
                      \cdot c)\big\rangle.
\end{align*}

Using the above expressions we see that the constraints
$B_3T(S) \leq -2$ for $S \in I_{= 2}$ and $B_3T(S) \leq 0$ for
$S \in I_{= 3}$ in $\Delta_3(G)^*$ are exactly the ones that appear in
Theorem~4.2 of Bachoc and Vallentin~\cite{bachoc08}. Except for the
presence of an \textit{ad hoc} $2\times 2$ matrix variable $B$ that
comes from a separate argument, both bounds are identical.

\begin{remark}
Recall that for our method it is essential that $I_{k-2}/\On$ be
finite and that $I_{=m} / \On$ can be represented by the set of
$m \times m$ positive semidefinite matrices of rank at most $n$ with
ones in the diagonal and elements of $D$ elsewhere, up to simultaneous
permutations of the rows and columns. So $I_{k-2}/\On$ is finite for
$k=2$, $3$, but infinite whenever $D$ is infinite and $k \geq 4$. This
explains why it is not clear how to compute a $4$-point bound
generalization of the LP~\cite{delsarte77} and SDP~\cite{bachoc08}
bounds for the size of spherical codes with given minimal angular
distance. For the spherical finite-distance problem, however, the set
$I_{k-2}/\On$ is always finite, so that we can perform computations
beyond $k=3$.
\end{remark}

\section{Two-distance sets and equiangular lines} \label{sec:r-distance-sets}

If $D = \{a, -a\}$ for some $0 < a < 1$, then the vectors in a
spherical $D$-code correspond to a set of equiangular lines with
common angle $\arccos a$.  We set
\[
  M_a(n) = A\big(n, \{a,-a\}\big)
\] 
and write
\[
  M(n) = \max_{0 < a < 1} M_a(n)
\] 
for the maximum number of equiangular lines in $\R^n$ with any common
angle.

A semidefinite programming bound based on the method of Bachoc and
Vallentin~\cite{bachoc08}, and hence equivalent to $\Delta_3(G)^*$,
was applied to this problem by Barg and Yu~\cite{barg14} (see also the
table computed by King and Tang~\cite{king16}) which, together with
previous results, determines $M(n)$ for most $n \leq 43$.

Barg and Yu present~\cite[Eqs.~(14)--(17)]{barg13} a semidefinite
programming formulation that corresponds exactly to the formulation
given in Section~\ref{sec:sdp-formulation} when $k = 3$ (except for an
\textit{ad hoc} $2 \times 2$ matrix). In the other
papers~\cite{barg14, king16, okuda16, yu17} where this semidefinite
program is considered, a primal version is given instead, which is
less convenient from the perspective of rigorous verification of
bounds.

In this paper we compute new upper bounds for $M_a(n)$ for $a = 1/5$,
$1/7$, $1/9$, and~$1/11$ and many values of $n$ using $\Delta_k(G)^*$
with $k = 4$, $5$, and $6$. The results do not improve the known
bounds for $M(n)$ but greatly improve the known bounds for $M_a(n)$
for certain ranges of dimensions; these results are presented in
Section~\ref{sec:newsdp}.

\subsection{Overview of the literature}\label{sec:overview-equi}

The literature on equiangular lines is vast; here is a summary.

\subsubsection{Bounds for $M(n)$}

The interest in $M(n)$ started with Haantjes~\cite{haantjes48}, who
showed $M(3) = M(4) = 6$ in 1948. Since then, much progress has been
made using different techniques, and $M(n)$ has been determined for
many values of $n \leq 43$. Table~\ref{tab:Mn-small} presents the
known values for $M(n)$ for small dimensions.

\begin{table}[t]
\begin{adjustbox}{width=\textwidth,center}
\begin{tabular}{@{}ccccccccc@{}}
\toprule
$\bm{n}$  & $\bm{M(n)}$ & $\bm{a}$ & \textbf{SDP bound} & & $\bm{n}$ & $\bm{M(n)}$ & $\bm{a}$ & \textbf{SDP bound} 
\\\cmidrule{1-4}\cmidrule{6-9}
2    & 3   & 1/2      & 3 & & 17 & 48--49  & 1/5 & 51 \\\cmidrule{1-4}\cmidrule{6-9}
3    & 6   & $1/\sqrt{5}$ & 6 & & 18 & 56--60 & 1/5 & 61 \\\cmidrule{1-4}\cmidrule{6-9}
4    & 6   & 1/3, $1/\sqrt{5}$ & 6 & & 19 & 72--75 & 1/5 & 76 \\\cmidrule{1-4}\cmidrule{6-9}
5    & 10  & 1/3      & 10 & & 20 & 90--95  & 1/5 & 96 \\\cmidrule{1-4}\cmidrule{6-9}
6    & 16  & 1/3      & 16 & & 21 & 126    & 1/5 & 126 \\\cmidrule{1-4}\cmidrule{6-9}
7--13 & 28  & 1/3      & 28 & & 22 & 176    & 1/5 & 176 \\\cmidrule{1-4}\cmidrule{6-9}
14 & 28--29 & 1/3, 1/5 & 30 & & 23--41 & 276 & 1/5 & 276 \\\cmidrule{1-4}\cmidrule{6-9}
15   & 36  & 1/5      & 36 & & 42 & 276--288 & 1/5, 1/7 & 288 \\\cmidrule{1-4}\cmidrule{6-9}
16 & 40--41 & 1/5      & 42 & & 43 & 344    & 1/7 & 344\\\bottomrule
\end{tabular}
\end{adjustbox}\bigskip
\caption{Known values for $M(n)$ for small dimensions together with
the cosine $a$ of the common angle between the lines. The values
known exactly were determined by several authors \cite{barg14,
haantjes48, lemmens73, lint66}. Most lower bounds are collected by
Lemmens and Seidel~\cite{lemmens73}, except for dimensions $18$,
$19$, and $20$ \cite{linyu18}, \cite[p.123]{taylor71}. The
remaining upper bounds \cite{greaves16, greaves18a, greaves18b,
yu15} do not rely on semidefinite programming.}
\label{tab:Mn-small}
\end{table}

The most general bound for $M(n)$, called the \emph{absolute bound},
is due to Gerzon:

\begin{theorem}[Gerzon, cf.\ Lemmens and Seidel~\cite{lemmens73}]\label{thm:gerzon}
We have
\begin{equation*}
M(n) \leq \frac{n(n+1)}{2}.
\end{equation*}
Moreover, if equality holds, then $n = 2$, $n=3$, or $n = l^2 - 2$ for
some odd integer~$l$ and the cosine of the common angle is $a = 1/l$.
\end{theorem}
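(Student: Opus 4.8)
The plan is to prove the inequality by the standard ``projection'' argument and then analyse the equality case through the spectrum of the Gram matrix of the line system; everything rests on linear algebra over $\R$ until the very last step, where arithmetic finally enters.

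\emph{The inequality.} Given $N$ equiangular lines in $\R^n$ with common angle $\alpha$, represent them by unit vectors $x_1,\dots,x_N$, so that $x_i\cdot x_j=\pm a$ for $i\ne j$ with $a=\cos\alpha$, and map each line to the rank-one projection $P_i=x_ix_i^{\sf T}$ in the space of symmetric $n\times n$ matrices, which has dimension $n(n+1)/2$. Since $\langle P_i,P_j\rangle=(x_i\cdot x_j)^2$ for the trace product, the Gram matrix of $P_1,\dots,P_N$ is $(1-a^2)I_N+a^2J_N$, with eigenvalues $1-a^2$ and $1+(N-1)a^2$, both positive because $0<a<1$; so the $P_i$ are linearly independent and $N\le n(n+1)/2$.

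\emph{Equality: pinning down $a$ and $n$.} Suppose $N=n(n+1)/2$, so $\{P_i\}$ is a basis of the space of symmetric matrices (in particular the $x_i$ span $\R^n$, else the inequality in a smaller dimension would be violated). Writing $I_n=\sum_i c_iP_i$ and pairing with $P_j$ and with $I_n$ shows all $c_i$ equal a common value, which equals both $n/N=2/(n+1)$ and $(1-a^2n)/(1-a^2)$; comparing these gives $a^2=1/(n+2)$. Moreover $I_n=\tfrac{2}{n+1}\sum_iP_i$, so if $X$ is the $N\times n$ matrix with rows $x_i$ then $X^{\sf T}X=\sum_iP_i=\tfrac{n+1}{2}I_n$, whence the Gram matrix $G=XX^{\sf T}$ has eigenvalue $\tfrac{n+1}{2}$ with multiplicity $n$ and $0$ with multiplicity $N-n$. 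Writing $G=I_N+aS$, where $S$ is the Seidel matrix (a symmetric integer matrix with zero diagonal and $\pm1$ off the diagonal), $S$ has eigenvalues $\rho_1=\tfrac{n-1}{2}\sqrt{n+2}$ (multiplicity $n$) and $\rho_2=-\sqrt{n+2}$ (multiplicity $N-n$). As $S$ is integral, $\sqrt{n+2}$ is an algebraic integer; if it were irrational, the nontrivial automorphism of $\mathbb{Q}(\sqrt{n+2})$ would permute the roots of the characteristic polynomial of $S$ preserving multiplicities, and matching the positive roots against the negative roots on both sides forces $\tfrac{n-1}{2}\sqrt{n+2}=\sqrt{n+2}$, i.e.\ $n=3$. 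Thus, apart from the exceptions $n=2$ (where $\sqrt{n+2}=2$) and $n=3$, we have $n+2=l^2$ for an integer $l\ge3$ and $a=1/l$.

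\emph{Parity of $l$ --- the main obstacle.} It remains to show $l$ is odd. After replacing some $x_i$ by $-x_i$ (which changes neither the lines nor, up to conjugation by a diagonal $\pm1$ matrix, the spectrum of $S$) we may assume $x_1\cdot x_j<0$ for all $j>1$, so $S=\left(\begin{smallmatrix}0&-\mathbf{1}^{\sf T}\\-\mathbf{1}&S'\end{smallmatrix}\right)$. From $\rho_1\rho_2=-(N-1)$ one gets $S^2=(\rho_1+\rho_2)S+(N-1)I_N$, and reading off the first row gives $S'\mathbf{1}=(\rho_1+\rho_2)\mathbf{1}$; hence the symmetric $0/1$ matrix $A=\tfrac12(J_{N-1}-I_{N-1}-S')$ is $k$-regular with $k=\tfrac12(N-2-\rho_1-\rho_2)=\tfrac14(l-1)^3(l+2)>0$, and every eigenvector of $A$ orthogonal to $\mathbf{1}$ lifts to an eigenvector of $S$, so every eigenvalue of $A$ lies in $\{k,\ \tfrac{-1-\rho_1}{2},\ \tfrac{-1-\rho_2}{2}=\tfrac{l-1}{2}\}$. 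A short argument (if $\tfrac{l-1}{2}$ did not occur, then $(A-kI)(A-\tfrac{-1-\rho_1}{2}I)=0$, and reading the diagonal, using that $A$ is $k$-regular with zero diagonal, forces $k=0$ or $\rho_1=1$, both impossible for $l\ge3$) shows that $\tfrac{l-1}{2}$ genuinely occurs as an eigenvalue of the integer matrix $A$; being a rational algebraic integer it is an integer, so $l$ is odd. I expect this last step to be the crux: it is the only place where one must leave linear algebra over $\R$ and recognize the combinatorial object behind $S$ --- the associated regular two-graph and its derived strongly regular graph $A$ --- and the only place where the arithmetic of $l$, rather than merely the real spectrum of $S$, comes into play; the two exceptional cases $n=2,3$ are exactly those where this argument degenerates and must be checked by hand against the known icosahedron and triangle configurations.
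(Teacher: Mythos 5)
The paper does not prove this theorem --- it is cited to Matou\v sek (for the inequality) and to Lemmens and Seidel (for the full statement) --- so there is no in-paper proof to compare against. Your argument is correct and is in substance the standard Lemmens--Seidel proof: the rank bound via the rank-one projections $P_i=x_ix_i^{\sf T}$; the derivation $a^2=1/(n+2)$ by expanding $I_n$ in the basis $\{P_i\}$; the two Seidel eigenvalues $\rho_1=\tfrac{n-1}{2}\sqrt{n+2}$ and $\rho_2=-\sqrt{n+2}$ with multiplicities $n$ and $N-n$; the Galois-conjugation argument showing that irrational $\sqrt{n+2}$ forces $n=3$ (by matching positive against negative eigenvalues together with their multiplicities); and the passage to the descendant graph $A$ on $N-1$ vertices, whose eigenvalue $\tfrac{l-1}{2}$ is a rational algebraic integer and hence an integer, making $l$ odd. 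I checked the nondegeneracy step at the end: if $\tfrac{l-1}{2}$ were absent, then $(A-kI)(A-\mu_1I)=0$ with $\mu_1=\tfrac{-1-\rho_1}{2}$, and reading the diagonal ($A$ has zero diagonal and $(A^2)_{ii}=k$) gives $k(1-\rho_1)=0$, which indeed fails for $l\ge 3$ since $k=\tfrac14(l-1)^3(l+2)>0$ and $\rho_1=\tfrac12 l(l^2-3)>1$. The only imprecision is a closing side remark: nothing needs to be ``checked by hand'' for $n=2$ or $n=3$, since the theorem is an implication that simply lists these as allowed exceptions; whether equality is actually attained there is a separate fact not asserted by the statement.
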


The four cases where it is known that the bound is attained are
$n = 2$, $3$, $7$, and~$23$. Delsarte, Goethals and
Seidel~\cite[Example 8.3]{delsarte77} show that equality holds if and
only if the union of the code with its antipodal code is a tight
spherical 5-design, and in this case Cohn and Kumar~\cite{cohn07} show
this union is a universally optimal code (which means it minimizes
every completely monotonic potential function in the squared chordal
distance). Bannai, Munemasa, and Venkov~\cite{bannai04} and Nebe and
Venkov~\cite{nebe12} show that there are infinitely many odd
integers~$l$ for which no tight spherical 5-design exists in $S^{n-1}$
with $n = l^2 - 2$, so that Gerzon's bound cannot be attained in those
dimensions. This list starts with $l = 7$, $9$, $13$, $21$, $25$,
$45$, $57$, $61$, $69$, $85$, $93$, \dots\ (resp.\ $n = 47$, $79$,
$167$, $439$, $623$, $2023$, $3247$, $3719$, $4759$, $7223$, $8647$,
\dots). For the remaining possible dimensions, attainability is an
open problem.

For the dimensions that are not of the form $l^2 - 2$ for some odd
integer $l$, the absolute bound can be improved:

\begin{theorem}[Glazyrin and Yu~\cite{glazyrin18} and King and 
Tang~\cite{king16}]\label{thm:glazyrin1}
Let $l$ be the unique odd integer such that
$l^2-2 \leq n \leq (l+2)^2-3$. Then,
\begin{equation*}
M(n) \leq 
\begin{cases}
\displaystyle\frac{n(l+1)(l+3)}{(l+2)^2-n}, & 
n = 44,45,46,76,77,78,117,118,166,222,286,358;\\[1em]
\displaystyle\frac{(l^2-2)(l^2-1)}{2}, &  
\text{for all other } n \geq 44.
\end{cases}
\end{equation*}
Furthermore, if the bound is attained, then the cosine of the angle
between the lines is $a = 1/(l+2)$ for the first case and $a = 1/l$
for the second.
\end{theorem}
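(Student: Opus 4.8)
The plan is to bound $M_a(n)=A(n,\{a,-a\})$ separately for each possible common cosine $a\in(0,1)$ and then take the maximum over $a$; the two cases of the formula correspond to which angle is extremal. The first reduction is Neumann's theorem (see Lemmens and Seidel~\cite{lemmens73}): more than $2n$ equiangular lines in $\R^n$ force $1/a$ to be an odd integer. Since $2n\le2\big((l+2)^2-3\big)$ is far below both expressions in the statement once $n\ge44$, we may assume $a=1/l'$ with $l'$ an odd integer $\ge3$; only finitely many such $l'$ can give a bound that is not visibly too small, so this is a finite optimisation.

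For the ``flat'' angles, those with $na^2<1$, the relative bound of van Lint and Seidel --- which after symmetry reduction is the linear programming bound, Theorem~\ref{thm:lint} --- gives
\[
  M_a(n)\le\frac{n(1-a^2)}{1-na^2},
\]
the quick proof being that $\sum_i v_iv_i^{\sf T}$ is positive semidefinite with trace $N$ and squared Frobenius norm $N+N(N-1)a^2\ge N^2/n$. As a function of $a^2$ on $(0,1/n)$ this is strictly increasing, so the extremal flat angle is $a=1/l'$ with $l'$ the smallest odd integer exceeding $\sqrt n$; since $l-1<\sqrt n<l+2$ throughout the stated range, that integer is $l+2$ whenever $n\ge l^2$, which produces exactly $n(l+1)(l+3)/\big((l+2)^2-n\big)$ (the reciprocals $l+4,l+6,\dots$ giving strictly smaller values). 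For the ``sharp'' angle $a=1/l$ itself, for which the relative bound is useless once $n\ge l^2$, one starts instead from Gerzon's bound (Theorem~\ref{thm:gerzon}) and its equality analysis and applies the refinement of Glazyrin and Yu~\cite{glazyrin18}: exploiting the high multiplicity forced on the smallest eigenvalue $-l$ of the positive semidefinite Seidel matrix $I+lS$ (equivalently, that the rank-one projections $v_iv_i^{\sf T}$ are confined to a subspace of $\mathrm{Sym}(\R^n)$ of dimension $\binom{l^2-1}{2}=\tfrac{1}{2}(l^2-2)(l^2-1)$), it improves Gerzon's bound, in the range of dimensions at issue, to $M_{1/l}(n)\le\tfrac{1}{2}(l^2-2)(l^2-1)$; the reciprocals $l-2,l-4,\dots$ are treated the same way and never dominate. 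Establishing this refinement of Gerzon's bound is the first serious obstacle.

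It then remains to determine, for each $n\in[l^2-2,(l+2)^2-3]$, which angle wins. One compares $\tfrac{1}{2}(l^2-2)(l^2-1)$ (the Glazyrin--Yu value at $a=1/l$) with $n(l+1)(l+3)/\big((l+2)^2-n\big)$ (the relative bound at $a=1/(l+2)$) and with the strictly smaller values coming from the other angles and from the $2n$ bound: for every $n\ge44$ outside the listed set the former is the larger, giving the second case of the formula, while for the dimensions $44,45,46,76,77,78,117,118,166,222,286,358$ the latter is the larger, giving the first case. For those finitely many exceptional dimensions one must additionally certify that no angle beats the relative bound at $a=1/(l+2)$; in particular at $n=l^2-2$ for $l=9$ and $l=13$ the plain Gerzon/Glazyrin--Yu value $\binom{l^2-1}{2}$ lies far above the claimed bound, so one invokes the non-existence of tight spherical $5$-designs in $S^{l^2-3}$~\cite{bannai04,nebe12} together with the pillar-decomposition bounds of King and Tang~\cite{king16} to eliminate the intermediate angles; this dimension-by-dimension verification is the second serious obstacle. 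The assertions about the attaining cosine follow by tracking which angle realises the maximum. The routine parts are the monotonicity of the relative bound, the elementary comparison of the two explicit rational functions of $n$, and the small-$l$ bookkeeping; the genuinely hard inputs are the Glazyrin--Yu refinement of Gerzon's bound for the angle $1/l$ and the King--Tang pillar analysis of the exceptional dimensions.
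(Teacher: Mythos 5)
The paper does not actually prove Theorem~\ref{thm:glazyrin1}; it is quoted directly from Glazyrin--Yu~\cite{glazyrin18} and King--Tang~\cite{king16}, so there is no in-paper argument to compare your proposal against. Evaluated on its own terms, your outline correctly identifies the skeleton of the argument that appears in those sources: reduce to angles with $1/a$ an odd integer via Neumann's theorem; for ``flat'' angles use the van~Lint--Seidel relative bound (Theorem~\ref{thm:lint}), observe it is increasing in $a^2$ on $(0,1/n)$, and take the largest admissible $a$, which is $1/(l+2)$ once $n\ge l^2$; for the ``sharp'' angle $a=1/l$ invoke Yu's strengthening $M_{1/l}(n)\le(l^2-2)(l^2-1)/2$ for $n\le 3l^2-16$ (Theorem~\ref{thm:yu}); and then compare these two rational functions of $n$ to see which case of the formula holds. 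All of that is a faithful sketch of the right route. However, you openly flag two steps as unestablished, so what you have is a plan rather than a proof.

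Two more specific problems. First, your description of the exceptional-dimension verification is off: you invoke the nonexistence of tight spherical $5$-designs in $S^{l^2-3}$ (Bannai--Munemasa--Venkov, Nebe--Venkov), but that result is used only to show Gerzon's bound is \emph{not attained} and hence to shave one off the bound in some dimensions (as the paper remarks after Theorem~\ref{thm:glazyrin2}); it plays no role in proving the upper bound in Theorem~\ref{thm:glazyrin1}. Your claim that ``at $n=l^2-2$ for $l=9$ and $l=13$ the plain Gerzon/Glazyrin--Yu value $\binom{l^2-1}{2}$ lies far above the claimed bound'' is also false: at $n=79$ we have $\binom{80}{2}=3160$, which equals the value $(81-2)(81-1)/2$ in the second case of the formula. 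Second, the step that genuinely requires care and is glossed over as ``treated the same way and never dominate'' is the control of the smaller odd reciprocals $a=1/l''$ with $l''<l$: Yu's bound for $a=1/l''$ requires $n\le 3(l'')^2-16$, which fails for $n$ in the upper part of the interval $[l^2-2,(l+2)^2-3]$ when $l''$ is small (e.g.\ $a=1/9$ or $a=1/11$ at $n=358$). There one needs a linear-in-$n$ bound such as Theorem~\ref{thm:glazyrin3}/\ref{thm:glazyrin4} or the King--Tang pillar estimates; simply saying ``the same way'' does not cover this. So the high-level route is right, but the proof as written has gaps you yourself identify plus a misattribution of which auxiliary results are actually needed.
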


Glazyrin and Yu also proved another theorem~\cite[Theorem
4]{glazyrin18} about the codes that attain the bound from
Theorem~\ref{thm:glazyrin1}:

\begin{theorem}[Glazyrin and Yu~\cite{glazyrin18}]\label{thm:glazyrin2}
  Suppose $l$ is a positive odd integer. If $X$ is a
  $\{1/l,-1/l\}$-spherical code of size $(l^2-2)(l^2-1)/2$ contained
  in $S^{n-1}$ with $n \leq 3l^2-16$, then $X$ must belong to a
  $(l^2-2)$-dimensional subspace.
\end{theorem}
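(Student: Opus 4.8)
\emph{Proof strategy.} Replacing $\R^n$ by $\spann(X)$ we may assume $n = \dim\spann(X) =: d$, so the assertion becomes $d \le l^2-2$. Put $m = l^2-2$, so that $|X| = \binom{m+1}{2}$. Gerzon's bound (Theorem~\ref{thm:gerzon}) applied in dimension $d$ gives $\binom{m+1}{2} = |X| \le \binom{d+1}{2}$, hence $d \ge m$. It therefore suffices to prove the complementary statement: a $\{1/l,-1/l\}$-code of size $\binom{m+1}{2}$ cannot span a dimension $d$ with $m < d \le 3l^2-16$.

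To extract dimension information I would pass to the rank-one projections $P_x = x x^{\sf T}$ for $x \in X$, equivalently to the Gram matrix $G = (x\cdot y)_{x,y\in X}$ and the frame operator $T = \sum_{x\in X} P_x$. Since $\langle P_x, P_y\rangle = (x\cdot y)^2$ equals $1$ on the diagonal and $1/l^2$ off it, one has $\tr T = |X|$ and $\tr T^2 = |X| + |X|(|X|-1)/l^2$, with $T \succeq 0$ of rank $d$; Cauchy--Schwarz applied to the $d$ positive eigenvalues of $T$ again yields $d \ge m$, equality holding only when $T$ is a scalar matrix and $G$ has exactly two distinct eigenvalues. This identifies $d = m$ as the ``tight frame'' case and shows that second-moment data alone cannot exclude $m < d$; additional structure is needed.

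That structure is the pillar decomposition. Fix $x \in X$, pick unit representatives with $x\cdot y \in \{1/l,-1/l\}$ for every other line, and split these into $X_+$ and $X_-$ by the sign. For $y \in X_+$ the vectors $\varphi(y) = \tfrac{l}{\sqrt{l^2-1}}(y - \tfrac1l x)$ are unit vectors in $x^\perp \cong \R^{d-1}$ with $\varphi(y)\cdot\varphi(y') \in \{\tfrac1{l+1}, -\tfrac1{l-1}\}$, so $\varphi(X_+)$ (and likewise $\varphi(X_-)$) is a spherical two-distance set in $\R^{d-1}$ with these prescribed inner products. Because $l$ is odd, $\tfrac{1-1/(l+1)}{1/(l+1)+1/(l-1)} = \tfrac{l-1}{2}$ is an integer, which forces strong regularity on $\varphi(X_\pm)$ once it is large and subjects $|X_\pm|$ to a bound far sharper than the generic $O(d^2)$ two-distance estimate --- of Neumaier ``absolute bound'' type, either bounding $|X_\pm|$ by a function of $l$ alone or placing $\varphi(X_\pm)$ into a Latin-square/Steiner family whose dimension is tightly controlled. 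Adding the two pillar estimates (iterating the reduction inside a pillar if necessary) bounds $|X|$ by a function of $d$ and $l$ that stays below $\binom{m+1}{2}$ exactly on the range $m < d \le 3l^2-16$, the desired contradiction.

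The main obstacle is this last step. A priori the pillars are merely two-distance sets, not association schemes, so one first has to argue that near-maximality propagates the regularity; and then one must push the resulting Neumaier-type estimates through a somewhat delicate numerical analysis --- in the spirit of the sporadic exceptional dimensions already appearing in Theorem~\ref{thm:glazyrin1} --- to land on precisely the cutoff $3l^2-16$. Tracking equality throughout is what pins $\spann(X)$ down to dimension exactly $l^2-2$.
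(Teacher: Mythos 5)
The paper does not prove this theorem; it is cited from Glazyrin and Yu \cite{glazyrin18}, so there is no internal proof to compare against. Evaluating your proposal on its own terms: the opening reduction is sound (Gerzon in $\dim\spann(X)$ forces $d\ge l^2-2$, so one must rule out $l^2-2 < d \le 3l^2-16$), and the pillar decomposition is set up correctly --- the computation showing $\varphi(X_\pm)$ is a $\{1/(l+1),-1/(l-1)\}$-code in $\R^{d-1}$ checks out. (Small slip: the Larman--Rogers--Seidel integrality parameter is $(1-b)/(a-b)=(l+1)/2$, not the quantity you wrote, which is $(1-a)/(a-b)=(l-1)/2$; both are integers for odd $l$, so nothing downstream is affected.)

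The genuine gap is exactly where you flag it. Everything up to the pillar decomposition re-derives facts that cannot distinguish $d=l^2-2$ from $l^2-2 < d \le 3l^2-16$, and the step that would do the distinguishing --- ``a Neumaier-type absolute bound on $|X_\pm|$, iterated if necessary, giving a total below $\binom{l^2-1}{2}$ precisely on the range $d\le 3l^2-16$'' --- is asserted, not proved. You do not identify which bound on a $\{1/(l+1),-1/(l-1)\}$-code of dimension $d-1$ you intend to invoke, you give no mechanism for why ``near-maximality propagates regularity,'' and you do not explain why the cutoff lands at $3l^2-16$ rather than somewhere else. As it stands the argument is a plan, not a proof.

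It is also worth noting that this route is unlikely to be the intended one. In Glazyrin--Yu's paper, Theorem~\ref{thm:yu} (their Theorem 6) is reproved via positivity of the Gegenbauer polynomials $P_1^{n-1}$ and $P_3^{n-1}$, and the cutoff $n\le 3/a^2-16$ is exactly the threshold below which that argument is tight; their Theorem 4 (the statement here) then follows by analyzing the equality case of that same polynomial inequality, which directly controls the rank of the Gram matrix and hence $\dim\spann(X)$. A polynomial-method equality analysis naturally produces both the $3l^2-16$ threshold and the dimension conclusion at once, whereas your pillar-counting approach would at best rule out codes of the given size in the intermediate dimensions and would need a separate numerics-heavy argument to match the exact cutoff. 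If you want to pursue your strategy, the concrete next step is to write down the precise two-distance bound you plan to apply to $\varphi(X_\pm)\subseteq S^{d-2}$ and check numerically, for small odd $l$, whether $1+|X_+|+|X_-|$ really drops below $\binom{l^2-1}{2}$ exactly when $d>l^2-2$ and $d\le 3l^2-16$.
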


Since $(l+2)^2-3 \leq 3l^2-16$ for $l \geq 5$, this last theorem
implies that if the second bound from Theorem~\ref{thm:glazyrin1} is
attained, then Gerzon's bound also has to be attained for $n =
l^2-2$. For the first two cases where tight spherical 5-designs do not
exist, this implies $M(n) \leq 1127$ for $47 \leq n \leq 75$ and
$M(n) \leq 3159$ for $79 \leq n \leq 116$.
The following theorem is adapted from Larman, Rogers, and
Seidel~\cite[Theorem~2]{larman77}:

\begin{theorem}[Larman, Rogers, and Seidel~\cite{larman77}]\label{thm:larman}
We have
\begin{equation*}
  M(n) \leq \max\{2n+3, M_{1/3}(n), M_{1/5}(n), \ldots, M_{1/l}(n)\},
\end{equation*}
where $l$ is the largest odd integer such that $l \leq \sqrt{2n}$.
\end{theorem}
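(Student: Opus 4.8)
The plan is to prove the following sharp local statement and then maximise over the common angle: \emph{for every $a\in(0,1)$, if $M_a(n)>2n$ then $a^{-1}$ is an odd integer and $a^{-1}<\sqrt{2n}$.} Granting this, note that $a<1$ forces $a^{-1}\ge2$, so such an $a^{-1}$ is an odd integer with $3\le a^{-1}\le l$ (it is at most $l$ because it is odd and at most $\sqrt{2n}$), hence equals one of $3,5,\dots,l$; consequently, for every $a\in(0,1)$ either $M_a(n)\le2n$ or, since $a=1/a^{-1}$, the number $M_a(n)$ is one of $M_{1/3}(n),\dots,M_{1/l}(n)$. In both cases $M_a(n)\le\max\{2n+3,M_{1/3}(n),\dots,M_{1/l}(n)\}$, and since $M(n)=\max_a M_a(n)$ the theorem follows. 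To prepare the ground I would choose a unit direction vector $v_i$ on each of the $N:=M_a(n)$ lines, so that $v_i\cdot v_j\in\{a,-a\}$ for $i\neq j$; then the Gram matrix $(v_i\cdot v_j)_{i,j}$ equals $I+aS$ for a symmetric $N\times N$ \emph{Seidel matrix} $S$ (zero diagonal, off-diagonal entries $\pm1$). Since $I+aS\succeq0$ and has rank at most $n$, the number $-a^{-1}$ is an eigenvalue of $S$ of multiplicity $m\ge N-n$; if moreover $N>2n$, then $m>N/2$ and $m\ge2$.

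The crux is that $a^{-1}$ is an odd integer; this is the Larman--Rogers--Seidel integrality argument, which I would either import from \cite{larman77} or reprove as follows. The characteristic polynomial $\chi_S(x)=\det(xI-S)$ lies in $\Z[x]$ and has $-a^{-1}$ as a root of multiplicity $m>N/2$. If $a^{-1}$ were irrational, a nontrivial Galois conjugate of $-a^{-1}$ would be another root of $\chi_S$ of multiplicity $m$, so $\chi_S$ would have more than $N$ roots counted with multiplicity, which is impossible. Hence $a^{-1}\in\mathbb{Q}$, and being a rational algebraic integer it lies in $\Z$, with $a^{-1}\ge2$ because $a<1$. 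For the parity, reduce modulo $2$: over $\mathbb{F}_2$ one has $S\equiv J-I$, where $J$ is the all-ones matrix, and $\det(tI-J)=t^{N-1}(t-N)$, so $\chi_S(x)\equiv(x+1)^{N-1}(x+1-N)\pmod2$, a polynomial in which $x=0$ is a root of multiplicity at most $1$. On the other hand, polynomial division by the monic integer polynomial $(x+a^{-1})^m$ gives $\chi_S(x)=(x+a^{-1})^m g(x)$ with $g\in\Z[x]$, so if $a^{-1}$ were even then $x^m\mid\chi_S(x)$ modulo $2$ with $m\ge2$, a contradiction; hence $a^{-1}$ is odd.

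Finally, to see that $a^{-1}<\sqrt{2n}$ I would use the classical relative bound: if $na^2<1$ then $N\le n(1-a^2)/(1-na^2)$. If $na^2\ge1$ then already $a^{-1}\le\sqrt n<\sqrt{2n}$; and if $na^2<1$ then $2n<N\le n(1-a^2)/(1-na^2)$ rearranges, all quantities being positive, to $a^2(2n-1)>1$, that is $a^{-2}<2n-1<2n$, so again $a^{-1}<\sqrt{2n}$. This establishes the local statement and completes the outline. The main obstacle is the integrality-and-parity assertion of the middle paragraph: the eigenvalue bookkeeping is immediate, but the detour through Galois conjugates and, especially, the reduction modulo $2$ are the substantive points, and they are precisely what \cite{larman77} supplies; the use here of the slightly weaker threshold $2n+3$ in place of the natural $2n$ is of no consequence, since the argument needs only $M_a(n)>2n$.
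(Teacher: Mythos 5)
The paper does not prove Theorem~\ref{thm:larman}; it only cites it as adapted from \cite[Theorem~2]{larman77}, so there is no in-paper argument to compare against. Your self-contained proof is correct: it is the classical Larman--Rogers--Seidel argument, reconstructed cleanly. The spectral bookkeeping (rank of $I+aS$ at most $n$, hence $-a^{-1}$ an eigenvalue of $S$ of multiplicity $m\ge N-n>N/2$ when $N>2n$), the integrality via Galois conjugates, the parity via reduction of $\chi_S$ modulo $2$ using $S\equiv J-I$ and the fact that $(x+1)^{N-1}(x+1-N)$ vanishes at $x=0$ to order at most one, and the final appeal to the van Lint--Seidel relative bound (Theorem~\ref{thm:lint}) to force $a^{-2}<2n$, are all correct and complete. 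You in fact establish the slightly sharper threshold $2n$ where the statement uses $2n+3$, and you correctly note this only strengthens the conclusion. The one stylistic remark is that the step ``$(x+a^{-1})^m$ divides $\chi_S$ in $\Z[x]$'' deserves the explicit remark that both polynomials are monic with integer coefficients (once $a^{-1}\in\Z$ is known), so division is exact over $\Z$; you do say this implicitly, and it is fine.
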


Most of the results for $M(n)$ rely on Theorem~\ref{thm:larman}, which
shows that to bound $M(n)$ one just has to consider finitely many
angles.  This motivates the consideration of $M_a(n)$ when $1/a$ is an
odd integer.

\subsubsection{Bounds for $M_a(n)$}

Bounds for fixed $a$ are known as relative bounds, as opposed to
Gerzon's absolute bound from Theorem~\ref{thm:gerzon}. The first
relative bound is due to van Lint and Seidel~\cite{lint66}:

\begin{theorem}[van Lint and Seidel~\cite{lint66}]\label{thm:lint}
If $n < 1/a^2$, then
\begin{equation*}
M_a(n) \leq \frac{n(1-a^2)}{1 - na^2}.
\end{equation*}
\end{theorem}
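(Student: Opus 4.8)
The plan is to represent each equiangular line by the rank-one orthogonal projection onto it and to argue inside the space of symmetric $n \times n$ matrices equipped with the trace inner product $\langle A,B\rangle = \tr(A^{\sf T}B)$. Fix a spherical $\{a,-a\}$-code $C = \{v_1,\dots,v_N\} \subseteq S^{n-1}$ with $N = M_a(n)$, so that $v_i \cdot v_j \in \{a,-a\}$ whenever $i \neq j$, and put $P_i = v_i v_i^{\sf T}$. A one-line computation gives $\langle P_i, P_j\rangle = \tr\!\big((v_i^{\sf T}v_j)\, v_i v_j^{\sf T}\big) = (v_i\cdot v_j)^2$, which equals $1$ for $i = j$ and $a^2$ for $i \neq j$; equivalently, the Gram matrix of $P_1,\dots,P_N$ equals $(1-a^2)I_N + a^2 J_N$.

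The core of the argument is to compress this data into the single matrix $M = \sum_{i=1}^N P_i$. As a sum of positive semidefinite matrices, $M$ is positive semidefinite, and from the inner products just computed we get $\tr(M) = \sum_i \tr(P_i) = N$ and $\tr(M^2) = \sum_{i,j}\langle P_i, P_j\rangle = N + N(N-1)a^2$.

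Now apply Cauchy--Schwarz to the nonnegative eigenvalues $\mu_1,\dots,\mu_n$ of $M$: since $\big(\sum_j \mu_j\big)^2 \leq n\sum_j \mu_j^2$, we obtain $(\tr M)^2 \leq n\,\tr(M^2)$, that is, $N^2 \leq n\big(N + N(N-1)a^2\big)$. Assuming $N \geq 1$ (the case $N = 0$ being trivial) and dividing by $N$ gives $N \leq n + n(N-1)a^2$, which rearranges to $N(1 - na^2) \leq n(1 - a^2)$. The hypothesis $n < 1/a^2$ says precisely that $1 - na^2 > 0$, so dividing by this positive number yields $N \leq n(1-a^2)/(1-na^2)$, as claimed.

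There is no real obstacle here beyond keeping track of the sign of $1 - na^2$: that hypothesis is exactly what makes the final division preserve the inequality, and without it the stated bound would be vacuous. One may equally well phrase the key step as $\tr(M)^2 \leq \operatorname{rank}(M)\,\tr(M^2) \leq n\,\tr(M^2)$ for positive semidefinite $M$; the estimate then drops out as soon as the two trace identities are in hand.
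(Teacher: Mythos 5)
Your proof is correct. Note that the paper does not actually give a proof of this theorem: it is quoted from van~Lint and Seidel, and the surrounding text only remarks (citing Glazyrin and Yu) that the bound follows from the positivity of the Gegenbauer polynomial $P_2^n$ and that it coincides with the symmetry-reduced $2$-point SDP bound when $n \leq 1/a^2 - 2$. Your projection-matrix argument is a classical and somewhat more elementary route to the same inequality. The key step, $(\tr M)^2 \leq n\,\tr(M^2)$ with $M = \sum_i v_i v_i^{\sf T}$, is exactly what one obtains by summing all entries of the positive semidefinite matrix $\bigl(P_2^n(v_i \cdot v_j)\bigr)_{i,j}$, since $P_2^n(t) = (nt^2 - 1)/(n-1)$; you instead obtain it directly by Cauchy--Schwarz on the eigenvalues of $M$ (equivalently $\tr(M)^2 \leq \operatorname{rank}(M)\,\tr(M^2)$), avoiding any appeal to Schoenberg's characterization of positive definite kernels. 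This makes your proof more self-contained, at the cost of not exposing the link to the LP bound that the paper wants to emphasize. You also correctly identify that the hypothesis $n < 1/a^2$ is used only to make the final division sign-preserving; that remark is accurate and worth stating.
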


As shown by Glazyrin and Yu~\cite[Theorem 5]{glazyrin18},
Theorem~\ref{thm:lint} can be derived from the positivity of the
Gegenbauer polynomials $P_2^n$, and indeed this is the bound given by
the semidefinite programming techniques when $n \leq 1/a^2 - 2$. This
bound is also the first case of Theorem~\ref{thm:glazyrin1}.

After computing the semidefinite programming bound for many values of
$n$ and~$a$, Barg and Yu~\cite{barg14} observed long ranges
$1/a^2-2\leq n \leq 3/a^2 - 16$ where the bound remained stable,
matching Gerzon's bound (Theorem~\ref{thm:gerzon}) at $n = 1/a^2 -
2$. Based on this observation, Yu~\cite{yu17} proved the following
theorem:
\begin{theorem}[Yu~\cite{yu17}]\label{thm:yu}
If $n \leq 3/a^2 - 16$ and $a \leq 1/3$, then
\begin{equation*}
 M_a(n) \leq \frac{(1/a^2-2)(1/a^2-1)}{2}.
\end{equation*}
\end{theorem}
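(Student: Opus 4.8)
The plan is to reduce to the case where $1/a$ is an odd integer and then argue by contradiction, combining Gerzon's bound (Theorem~\ref{thm:gerzon}) with the rigidity statement of Glazyrin and Yu (Theorem~\ref{thm:glazyrin2}). Write $t = 1/a^2$ and $N = (t-2)(t-1)/2$ for the claimed bound. If $1/a$ is not an odd integer, then by the classical integrality theorem for equiangular lines (Neumann; see e.g.\ Lemmens and Seidel~\cite{lemmens73}) any set of equiangular lines in $\R^n$ with common angle $\arccos a$ has at most $2n$ elements, so $M_a(n) \le 2n \le 6t - 32$; since the polynomial $t^2 - 15t + 66$ has negative discriminant it is positive for every $t$, hence $6t - 32 < (t-2)(t-1)/2 = N$ and we are done. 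So we may assume $l := 1/a$ is an odd integer, with $l \ge 3$ because $a \le 1/3$, so that $N = (l^2-2)(l^2-1)/2$. If in addition $n \le l^2 - 2$, then Gerzon's bound already gives $M_a(n) \le M(n) \le n(n+1)/2 \le (l^2-2)(l^2-1)/2 = N$; thus it remains to treat $l^2 - 1 \le n \le 3l^2 - 16$.

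Suppose, for contradiction, that there is a $\{1/l, -1/l\}$-code $X \subseteq S^{n-1}$ with $|X| \ge N+1$. The key step is to show that every $Y \subseteq X$ with $N - 1 \le |Y| \le N$ spans a subspace of dimension exactly $l^2 - 2$. For the lower bound: $Y$ is itself a $\{1/l,-1/l\}$-code, so if $d = \dim\spann(Y)$ then $|Y| \le M(d) \le d(d+1)/2$ by Gerzon's bound; if $d \le l^2 - 3$ this yields $|Y| \le (l^2-3)(l^2-2)/2 = N - (l^2-2) < N - 1$, a contradiction, so $d \ge l^2 - 2$. For the upper bound: enlarge $Y$ to some $X' \subseteq X$ with $|X'| = N$ (possible as $|X| \ge N+1$); Theorem~\ref{thm:glazyrin2} applies to $X'$ inside $S^{n-1}$ since $n \le 3l^2 - 16$, placing $X'$, and hence $Y$, inside an $(l^2-2)$-dimensional subspace, so $d \le l^2 - 2$.

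Now fix $A \subseteq X$ with $|A| = N - 1$ and put $W = \spann(A)$, a subspace of dimension $l^2 - 2$ by the key step. For each $z \in X \setminus A$ the set $A \cup \{z\}$ has $|A \cup \{z\}| = N$, so it too spans an $(l^2-2)$-dimensional subspace; since $W \subseteq \spann(A \cup \{z\})$ and the dimensions coincide, $\spann(A \cup \{z\}) = W$, whence $z \in W$. As this applies to every $z \in X \setminus A$, we conclude $X \subseteq W$. But then $|X| \le M_{1/l}(l^2 - 2) \le M(l^2-2) \le (l^2-2)(l^2-1)/2 = N$, contradicting $|X| \ge N+1$. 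Therefore $M_a(n) = M_{1/l}(n) \le N$.

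The main obstacle is Theorem~\ref{thm:glazyrin2}, which supplies the rigidity that drives everything else; granting it, what remains is a dimension count whose slack is exactly the gap $l^2 - 2$ between consecutive values of Gerzon's bound near $n = l^2 - 2$, and this is precisely what makes the threshold $N - 1$ (rather than $N$) work in the key step and hence in the span-agreement argument. One must also be careful with the edge cases $n \le l^2 - 2$, handled separately above via Gerzon's bound, since there the $(l^2-2)$-dimensional subspace furnished by Theorem~\ref{thm:glazyrin2} need not be proper and the contradiction argument would stall.
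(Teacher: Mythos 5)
The paper does not prove this theorem; it cites Yu~\cite{yu17}, whose argument --- and the alternative proof by Glazyrin and Yu~\cite[Theorem~6]{glazyrin18} that the text mentions --- goes through the positivity of Gegenbauer polynomials, the same machinery underlying the $\Delta_k$ hierarchy. Your derivation is a genuinely different route: it combines Neumann's integrality theorem, Gerzon's absolute bound (Theorem~\ref{thm:gerzon}), and the rigidity result Theorem~\ref{thm:glazyrin2}. I checked the computations and they are correct: when $1/a$ is not an odd integer, $M_a(n) \le 2n \le 6/a^2 - 32 < N$ because $t^2 - 15t + 66$ has negative discriminant; when $n \le l^2-2$ Gerzon's bound alone suffices; and in the remaining range $l^2-1 \le n \le 3l^2-16$ the key step correctly shows that every $Y \subseteq X$ with $N-1 \le |Y| \le N$ has $\dim\spann(Y) = l^2-2$, using the gap $N - (l^2-3)(l^2-2)/2 = l^2-2 > 1$ for the lower bound and Theorem~\ref{thm:glazyrin2} applied to a size-$N$ extension for the upper bound, after which the span-agreement argument traps $X$ in an $(l^2-2)$-dimensional subspace and Gerzon there gives $|X| \le N$. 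What this buys you is a short deduction making visible that the stability window $n \le 3/a^2-16$ in Yu's theorem is inherited directly from the rigidity theorem; what it does not buy you is logical independence from the LP machinery, because Theorem~\ref{thm:glazyrin2} --- Theorem~4 of Glazyrin and Yu --- is itself proved via Gegenbauer positivity. You should also verify in~\cite{glazyrin18} that their Theorem~4 is established prior to, and independently of, their Theorem~6 (Yu's bound), since otherwise your deduction would be circular; I believe the dependency runs the right way, but since both results are stated over exactly the same range of $n$ this deserves an explicit check rather than an assumption.
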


An alternative proof for the previous theorem is given by Glazyrin and
Yu~\cite[Theorem 6]{glazyrin18}, where the use of the positivity of
the Gegenbauer polynomials $P_1^{n-1}$ and $P_3^{n-1}$ is made more
explicit. The bounds given by the semidefinite programming method were 
also used to prove the following theorem:

\begin{theorem}[Okuda and Yu~\cite{okuda16}]\label{thm:okuda}
If $3/a^2 - 16 \leq n \leq 3/a^2+6/a+1$, then
\begin{equation*}
 M_a(n) \leq 2 + \frac{(n-2)(1/a+1)^3}{(3/a^2-6/a+2)-n}.
\end{equation*}
\end{theorem}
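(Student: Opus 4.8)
The plan is to reduce, by orthogonal projection, to bounding the size of a spherical two-distance set, for which a closed-form linear/semidefinite programming certificate can be written down. Start with a set of $N = M_a(n)$ equiangular lines of common cosine $a = 1/b$ in $\R^n$ and unit representatives $v_i$ with $v_i\cdot v_j = \pm 1/b$. Since $a\neq 0$, no two lines are orthogonal, so after flipping signs we may fix a line $\ell_1$ with representative $e$ and assume $e\cdot v_i = 1/b$ for every line $\ell_i\neq\ell_1$; writing $v_i = (1/b)e + \sqrt{1-1/b^2}\,w_i$ with $w_i$ a unit vector in $e^{\perp}\cong\R^{n-1}$, a short computation gives $w_i\cdot w_j\in\{\,1/(b+1),\,-1/(b-1)\,\}$. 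Thus $\{w_2,\dots,w_N\}$ is a spherical two-distance set in $S^{n-2}$, and the degree-two LP bound applied to it (with the polynomial $(t-1/(b+1))(t+1/(b-1))$, whose Gegenbauer expansion in dimension $n-1$ has all positive-degree coefficients nonnegative and degree-zero coefficient positive exactly when $n<1/a^2$) recovers the van Lint--Seidel bound of Theorem~\ref{thm:lint}. To reach values of $n$ near $3/a^2$ one must strengthen this in two places.

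First, repeat the projection (the Lemmens--Seidel pillar step). Fix a second line $\ell_2$ and split $\{w_i\}$ according to whether $w_i$ meets the vector attached to $\ell_2$ at inner product $1/(b+1)$ or $-1/(b-1)$; projecting the first part onto the $(n-2)$-dimensional orthogonal complement and rescaling yields a spherical two-distance set in $S^{n-3}$ whose two inner products come out, after a direct computation, to be
\[
\gamma_+ = \frac{1}{b+2}, \qquad \gamma_- = -\,\frac{b+3}{(b+2)(b-1)},
\]
and similarly for the second part with $b$ replaced in effect by $-b$. This double projection is what produces the ``$2+$'' and the factor $n-2$ in the statement; moreover $(1-\gamma_+)(1-\gamma_-) = (b+1)^3/\bigl((b+2)^2(b-1)\bigr)$, which is $(1/a+1)^3$ up to the harmless polynomial factor $(b+2)^2(b-1)$, so this is the pillar carrying the main term. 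Second, bound the size of each of these two-distance sets in $\R^{n-2}$ by an LP/SDP certificate of degree at least three: a polynomial $F(t) = (t-\gamma_+)(t-\gamma_-)\,q(t)$ with $q$ a nonnegative factor whose free parameter (a root placed at a negative value, as is seen implicitly through the positivity of $P_1^{n-1}$ and $P_3^{n-1}$ in the Glazyrin--Yu proof of Theorem~\ref{thm:yu}) is tuned so that the degree-one Gegenbauer coefficient in dimension $n-2$ is nonnegative while the degree-zero coefficient stays positive precisely on the interval $3/a^2-16\le n\le 3/a^2+6/a+1$. Weak LP duality then gives a bound of the shape $F(1)$ over the degree-zero coefficient; carrying out the arithmetic and cancelling the factor $(b+2)^2(b-1)$ produces $(n-2)(1/a+1)^3/\bigl((3/a^2-6/a+2)-n\bigr)$, and adding the two distinguished lines (while checking that the second pillar contributes nothing further to the count) yields the stated inequality.

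The main obstacle is the construction and verification of the degree-$\geq 3$ certificate: one must pin down its free parameter so that every Gegenbauer coefficient of positive degree in dimension $n-2$ is nonnegative and the degree-zero coefficient is positive exactly in the claimed range of $n$, and then check that $F(1)$ divided by that coefficient simplifies to the closed form above. A secondary difficulty, routine in the pillar method but needing care, is the bookkeeping: verifying that no line is counted twice across the two pillars and that the pillar not carrying the main term can be absorbed, so that the total is really $2$ plus a single fraction. Everything else --- the projections, the explicit computation of the pillar inner products, and weak duality --- is a routine calculation.
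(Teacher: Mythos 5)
The paper does not prove this theorem; it is stated and attributed to Okuda and Yu, so there is no in-paper proof to compare against. Your sketch must therefore be judged on its own.

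Some of your computations are correct: after fixing one line one gets a two-distance set in $S^{n-2}$ with inner products $1/(b+1)$ and $-1/(b-1)$, and after a second projection one pillar has inner products $\gamma_+ = 1/(b+2)$ and $\gamma_- = -(b+3)/\bigl((b+2)(b-1)\bigr)$, and indeed $(1-\gamma_+)(1-\gamma_-) = (b+1)^3/\bigl((b+2)^2(b-1)\bigr)$. But what you have is a plan, not a proof, and the places where it is vague are precisely where the theorem lives. First, the LP certificate $F(t) = (t-\gamma_+)(t-\gamma_-)q(t)$ is never constructed: you say $q$ has a "free parameter tuned so that the degree-one Gegenbauer coefficient in dimension $n-2$ is nonnegative," but without pinning $q$ down there is no certificate, and whether such a $q$ exists on the claimed interval of $n$ is exactly the content of the theorem. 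Second, the "harmless polynomial factor $(b+2)^2(b-1)$" is not harmless; $F(1)$ carries it, so it must cancel against a matching factor in the degree-zero coefficient $f_0$, and whether it does is a computation you have not done. Third, and most seriously, a double pillar decomposition produces \emph{two} two-distance sets (one for each value of $w_i\cdot f$), and the natural output of such an argument is $M_a(n) \leq 2 + (\text{bound for pillar }1) + (\text{bound for pillar }2)$; your parenthetical "checking that the second pillar contributes nothing further to the count" is the whole ballgame, and I see no reason it should be true — the second pillar is a genuine two-distance set and can be nonempty. Finally, the range of validity is asserted, not derived. I would also flag a sign issue in the statement as transcribed: for $a \le 1/3$ one has $3/a^2-6/a+2 \le 3/a^2-16 \le n$, so the denominator $(3/a^2-6/a+2)-n$ is nonpositive throughout the stated interval, making the right-hand side at most $2$; the intended denominator is presumably $n-(3/a^2-6/a+2)$, and any reconstruction should target that. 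As a broader remark, the subtitle of Okuda and Yu's paper ("nonexistence of tight spherical designs of harmonic index 4") and the text's note that the bound was "given by the semidefinite programming method" both suggest their proof is a direct degree-four LP certificate in dimension $n$ (or $n-1$ after a single projection), not a double pillar decomposition; so even if your plan could be pushed through, it likely would not be their argument.
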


The bounds from Theorems~\ref{thm:lint},~\ref{thm:yu},
and~\ref{thm:okuda} coincide with the values given by the semidefinite
programming formulation when $k = 3$ (see the points labeled
``$\Delta_3(G)^*$~\cite{barg14, king16}'' in Figures
\ref{fig:plot5}--\ref{fig:plot11}).
Another source of relative bounds is a technique called pillar
decomposition, introduced by Lemmens and Seidel~\cite{lemmens73} and
used to determine $M_{1/3}(n)$:

\begin{theorem}[Lemmens and Seidel~\cite{lemmens73}]\label{thm:lemmens}
If $n \geq 15$, then
\begin{equation*}
M_{1/3}(n) = 2n - 2.
\end{equation*}
\end{theorem}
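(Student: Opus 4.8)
The plan is to establish the two inequalities $M_{1/3}(n)\ge 2n-2$ and $M_{1/3}(n)\le 2n-2$ separately; only the second needs the hypothesis $n\ge 15$. For the lower bound I would exhibit an explicit configuration. It is convenient to encode a set of $N$ equiangular lines with common angle $\arccos(1/3)$ in $\R^n$ by its Seidel matrix $S$: if $v_1,\dots,v_N$ are unit vectors on the lines, then $S$ is the symmetric $N\times N$ matrix with zero diagonal and off-diagonal entries $S_{ij}=3\,v_i\cdot v_j\in\{+1,-1\}$, and such a configuration exists in $\R^n$ exactly when $3I+S\succeq 0$ and $\mathrm{rank}(3I+S)\le n$ — equivalently, $S$ has smallest eigenvalue at least $-3$, with the eigenvalue $-3$ occurring with multiplicity at least $N-n$. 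Taking for $S$ the Seidel matrix of the complete bipartite graph $K_{n-1,n-1}$ with a perfect matching removed, its natural block form $S=\bigl(\begin{smallmatrix}P&Q\\Q&P\end{smallmatrix}\bigr)$ with $P=J-I$ and $Q=2I-J$ shows that the eigenvalues of $S$ are those of $P+Q=I$ and of $P-Q=2J-3I$, namely $2n-5$ once, $1$ with multiplicity $n-1$, and $-3$ with multiplicity $n-2$. Hence $3I+S$ is positive semidefinite of rank $n$, and $\tfrac13(3I+S)$ is the Gram matrix of $2n-2$ distinct equiangular lines at angle $\arccos(1/3)$ in $\R^n$.

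For the upper bound I would use a pillar-decomposition argument in the spirit of Lemmens and Seidel. First fix one line $\ell_0$ with unit vector $e$ and, for every other line, choose the unit vector $v$ with $v\cdot e=1/3$; projecting these onto $e^\perp\cong\R^{n-1}$ and rescaling to unit length produces $N-1$ unit vectors whose pairwise inner products all lie in $\{1/4,-1/2\}$. The value $-1/2$ is very restrictive: any three projected vectors pairwise at inner product $-1/2$ must sum to zero, and no four of them can be pairwise at $-1/2$, since the corresponding Gram matrix $\tfrac32 I-\tfrac12 J$ of order $4$ is not positive semidefinite. I would partition the projected vectors into \emph{pillars} according to which pairs are at inner product $-1/2$, bound the size of each pillar and the number of pillars in terms of $n$ (a set of projected vectors pairwise at inner product $1/4$ is linearly independent, hence has size at most $n-1$), and track how the $\pm1$ signs coming from $\ell_0$ interact with the pillar structure; combining these estimates with the rank constraint $\mathrm{rank}(3I+S)\le n$ should give $N\le 2n-2$. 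Alternatively, one can run the spectral-graph-theoretic argument of Jiang, Tidor, Yao, Zhang, and Zhao: choosing a connected graph in the switching class of $S$, the inequality $\lambda_{\min}(S)\ge-3$ forces its adjacency matrix to have bounded spectral radius after a rank-one correction, and analyzing the near-equality case gives the same bound once $n$ is large.

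The hard part will be the upper bound, specifically the combinatorial case analysis of the pillars together with the emergence of the threshold $n\ge 15$. This threshold is unavoidable: the $28$ equiangular lines at angle $\arccos(1/3)$ in $\R^7$ — the configuration attaining Gerzon's bound from Theorem~\ref{thm:gerzon} — lie inside $\R^n$ for every $n\ge 7$, so $M_{1/3}(n)\ge 28>2n-2$ whenever $7\le n\le 14$. Hence any correct proof of $M_{1/3}(n)\le 2n-2$ must break down for $n\le 14$, and in the pillar decomposition this breakdown is caused by a single exceptional dense pillar configuration — the one carrying the $28$-line system — which can only be embedded when the ambient dimension is small. Isolating this exceptional configuration and proving it is the unique obstruction is the crux of the argument.
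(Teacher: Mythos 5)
The paper does not prove this theorem: it is quoted from Lemmens and Seidel \cite{lemmens73} without proof, so there is no in-paper argument to compare against. Evaluating your proposal on its own merits:

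Your lower bound is complete and correct. The Seidel matrix of $K_{n-1,n-1}$ minus a perfect matching has the block form you give, the block-diagonalization via $P\pm Q$ is right, and the eigenvalue count ($2n-5$ once, $1$ with multiplicity $n-1$, $-3$ with multiplicity $n-2$) shows $3I+S\succeq 0$ with $\operatorname{rank}(3I+S)=n$, so $\tfrac13(3I+S)$ is a Gram matrix of $2n-2$ unit vectors in $\R^n$ with pairwise inner products $\pm 1/3$, and no two are parallel. This is a clean and correct construction.

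Your upper bound, however, is only an outline, and the outline stops exactly where the work begins. The reductions you state are all correct --- projecting onto $e^\perp$ sends the inner products $\{1/3,-1/3\}$ to $\{1/4,-1/2\}$; three unit vectors pairwise at $-1/2$ sum to zero; no four can be pairwise at $-1/2$; a set pairwise at $1/4$ is linearly independent so has size at most $n-1$; and the threshold $n\ge 15$ is forced by the $28$-line system in $\R^7$. But the actual content of Lemmens and Seidel's proof is the case analysis on pillar structure that you defer with ``should give $N\le 2n-2$'': one must classify the admissible pillar patterns, show that the $\pm1$ signs relative to $\ell_0$ force each pillar to be small, and rule out the exceptional dense configurations except in low dimension. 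That analysis runs to several pages in \cite{lemmens73} and is not something that falls out of the rank constraint plus the three local observations you list. As written, your argument proves the lower bound and correctly frames the upper bound, but does not prove $M_{1/3}(n)\le 2n-2$; you would need to supply the pillar classification (or the Jiang--Tidor--Yao--Zhang--Zhao spectral argument you mention, which also requires substantial work to extract the exact constant $2n-2$ rather than an asymptotic statement).
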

For $a = 1/5$, they obtained results that lead to the following
conjecture:

\begin{conjecture}[Lemmens and
  Seidel~\cite{lemmens73}]\label{conj:seidel}
We have
\begin{equation*}
M_{1/5}(n) = 
\begin{cases}
276 &   \text{for } 23\leq n\leq  185;\\
\lfloor\frac{3}{2}(n-1)\rfloor &  \text{for } n \geq 185.
\end{cases}
\end{equation*}
\end{conjecture}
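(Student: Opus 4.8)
Strictly speaking this is a conjecture, not a theorem, so the realistic goal is to describe how one would attack each of its two regimes and to identify where the available tools ---including the hierarchy $\Delta_k(G)^*$ built above--- run out. The lower bounds are the easy half. For $23 \le n \le 185$ one invokes the configuration of $276$ equiangular lines with common angle $\arccos(1/5)$ in $\R^{23}$ (the two-graph associated with the Steiner system $S(4,7,23)$, discussed in Lemmens and Seidel~\cite{lemmens73}); this configuration sits inside $\R^n$ for every $n \ge 23$, so $M_{1/5}(n) \ge 276$ throughout that range. For $n \ge 185$ one uses the Lemmens--Seidel family of $\lfloor \tfrac{3}{2}(n-1) \rfloor$ lines with this angle; checking that it realizes the claimed size is routine. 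The real content of the conjecture is the optimality of these constructions and the fact that the crossover occurs exactly at $n = 185$.

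For the upper bound in the first regime, $23 \le n \le 185$, the plan is to pair the semidefinite bound with structural graph theory. First compute $\Delta_k(G)^*$ for $G$ the graph on $S^{n-1}$ with $D = \{1/5, -1/5\}$, taking $k$ as large as the formulation of Section~\ref{sec:sdp-formulation} allows; one should not expect these bounds to equal $276$ for every such $n$ (the computations reported here and in~\cite{barg14} suggest a small surplus for intermediate $n$), so one would then argue by hand. Any set of more than $2n$ equiangular lines with angle $\arccos(1/5)$ has a Seidel matrix whose smallest eigenvalue is $-5$ (Neumann-type constraints), so one passes to the switching class of graphs realizing that eigenvalue, uses the SDP value as an a priori cap on the number of lines, and eliminates the excess via eigenvalue interlacing together with an analysis of the relevant strongly regular or amply regular graphs. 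For $n$ close to $23$ this overlaps the known classification of large equiangular-line sets in low dimension.

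For the upper bound in the second regime, $n \ge 185$, a different machinery applies: the pillar decomposition of Lemmens and Seidel~\cite{lemmens73} (which records how the lines split relative to a fixed base set) together with spectral arguments bounding the multiplicity of the second-largest eigenvalue of the Seidel matrix in the associated switching class. The general large-$n$ identity $M_{1/(2k-1)}(n) = \lfloor \tfrac{k}{k-1}(n-1) \rfloor$ specializes at $k = 3$ to $\lfloor \tfrac{3}{2}(n-1) \rfloor$; the task here is to push the threshold in such a statement down to the conjectured value $n = 185$ and to match the lower bound construction exactly.

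The main obstacle ---and the reason the conjecture is still open--- is that neither half is self-contained with present tools: the bounds $\Delta_k(G)^*$ are not expected to equal $276$ for all of $23 \le n \le 185$, while the spectral and combinatorial arguments pin $M_{1/5}(n)$ down only for $n$ asymptotically large, not all the way to the threshold $185$. A complete proof would have to marry the numerical certificates produced by the methods of this paper with a finite but delicate case analysis of the extremal Seidel matrices in the intermediate range, and it is precisely that intermediate range, together with the exactness of the value $185$, where genuinely new input would be needed.
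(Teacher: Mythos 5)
This statement is a conjecture, and the paper neither proves it nor claims to; it only records it, surveys the partial progress around it (Theorem~\ref{thm:yu} settles $n \le 59$, the Barg--Yu SDP adds $n=60$, Neumaier's result gives the formula $\lfloor\tfrac{3}{2}(n-1)\rfloor$ for all $n$ beyond some threshold he claimed is at most $30251$, and Lin--Yu verify several of Lemmens--Seidel's claims), and uses it to motivate the computations of Section~\ref{sec:newsdp}. You correctly recognized this, and your account of the lower-bound constructions, the role of $\Delta_k(G)^*$, the pillar-decomposition and spectral route, and the genuine gap in the intermediate range is consistent with the paper's own discussion, so there is no paper proof to compare against.
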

Note that $276$ is the bound given by Theorem~\ref{thm:yu} when
$a = 1/5$ and this shows (together with the fact that there exists a
$\{-1/5, 1/5\}$-code of size $276$ in dimension $n = 23$) that the
conjecture is true for $n \leq 59$.  In fact, the semidefinite
programming bound computed by Barg and Yu~\cite{barg14} also shows
$M_{1/5}(60) = 276$. Neumaier~\cite{neuimaier89} (see
also~\cite[Corollary 6.6]{greaves16}) proved that there exists a large
$N$ such that $M_{1/5}(n) = \lfloor\frac{3}{2}(n-1)\rfloor$ for all
$n > N$.  Neumaier claimed, without a proof, that $N$ should be at
most $30251$.

Recently, Lin and Yu~\cite{lin19} made progress in this conjecture by
proving some claims from Lemmens and Seidel~\cite{lemmens73}. The only
case still open is when the code has a set with $4$ unit vectors with
mutual inner products $-1/5$ and no such set with~$5$ unit vectors (up
to replacement of some vectors by their antipodes).

Glazyrin and Yu~\cite{glazyrin18} introduced a new method to derive
upper bounds for spherical finite-distance sets. By using Gegenbauer
polynomials together with the polynomial method, they proved a theorem
that, specialized for two-distance sets, is:

\begin{theorem}[Glazyrin and Yu~\cite{glazyrin18}]\label{thm:glazyrin3}
 For all~$a$, $b$, and~$n$, we have
\begin{equation*}
 A\big(n, \{a,b\}\big) \leq \frac{n+2}{1 - (n-1)/(n(1-a)(1-b))}
\end{equation*}
whenever the right-hand side is positive.
\end{theorem}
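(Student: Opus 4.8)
The plan is to prove this by the polynomial method, in the enhanced form --- combining the linear-algebra bound with the positivity of the Gegenbauer polynomials --- that is the hallmark of the Glazyrin--Yu technique. Let $C \subseteq S^{n-1}$ be a spherical $\{a,b\}$-code and put $N = |C|$. For each $x \in C$ consider
\[
  f_x(y) = (x \cdot y - a)(x \cdot y - b),
\]
a polynomial of degree at most $2$ on the sphere with $f_x(x) = (1-a)(1-b)$ and $f_x(y) = 0$ for all $y \in C \setminus \{x\}$; thus $\sum_{y \in C} f_x(y) = (1-a)(1-b)$ for every $x \in C$.

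First I would expand $f_x$ into spherical harmonics: its degree-$0$ component is the constant $\tfrac{1}{n} + ab$, the same for all $x$; its degree-$1$ component is $y \mapsto -(a+b)(x\cdot y)$; and its degree-$2$ component is $y \mapsto (x\cdot y)^2 - \tfrac1n = \tfrac{n-1}{n} P_2^n(x\cdot y)$. The positivity input, via the addition formula, is that $P := \sum_{x \in C} x x^{\sf T}$ is positive semidefinite with $\tr P = N$ and Frobenius norm $\|P\|_F^2 = \sum_{x,y\in C}(x\cdot y)^2 \geq (\tr P)^2/n = N^2/n$, and that the matrix $\big((x_i\cdot x_j)^2 - \tfrac1n\big)_{i,j}$ is positive semidefinite of rank at most $\dim \mathrm{Harm}_2(S^{n-1}) = (n-1)(n+2)/2$ and trace $\tfrac{n-1}{n}N$; a Cauchy--Schwarz bound on the Frobenius norm of the latter matrix is where the factor $n+2$ is expected to enter. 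Summing $\sum_{y\in C} f_x(y) = (1-a)(1-b)$ over $x\in C$ and substituting the harmonic decomposition yields an exact identity relating $(1-a)(1-b)$, $N$, $\|P\|_F^2$, and $\|\sum_{x\in C} x\|^2 = \sum_{x,y\in C} x\cdot y$. Feeding in the Cauchy--Schwarz estimates together with the universally valid bounds $0 \leq \|\sum_{x\in C} x\|^2 \leq N^2$ and rearranging should give an inequality equivalent to
\[
  \frac{n-1}{n(1-a)(1-b)} + \frac{n+2}{N} \geq 1,
\]
which rearranges to the claimed bound, the hypothesis that the right-hand side be positive being exactly what makes this last rearrangement legitimate.

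The main obstacle I foresee is the bookkeeping needed to make the constants come out to precisely $n+2$ and $n-1$: this likely requires choosing the right auxiliary polynomial (for instance a one-parameter family $\kappa(x\cdot y - a)(x\cdot y - b)$, optimized over $\kappa > 0$) and the right linear combination of the positive semidefinite matrices $P$, $\big((x_i\cdot x_j)^2 - \tfrac1n\big)_{i,j}$, and the all-ones matrix, and it requires the estimate to be uniform in the sign of $a+b$ --- the cross term $-(a+b)\|\sum_{x\in C}x\|^2$ must be controlled using only $0 \leq \|\sum x\|^2 \leq N^2$, so that no hypothesis beyond positivity of the right-hand side is needed. No equality analysis is required, since the statement asserts only an inequality.
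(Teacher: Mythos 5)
Your setup is right: with $f(t)=(t-a)(t-b)$ one has
\[
  N(1-a)(1-b)=\sum_{x,y\in C}f(x\cdot y)
  =\Bigl(\tfrac1n+ab\Bigr)N^2-(a+b)S_1+\tfrac{n-1}{n}S_2,
\]
where $S_1=\bigl\|\sum_{x\in C}x\bigr\|^2\ge 0$ and $S_2=\sum_{x,y}P_2^n(x\cdot y)\ge 0$, and the harmonic components and the Cholesky--Schoenberg facts you invoke are all correct. But from here the argument does not close, and in fact it cannot close in the way you describe.

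\textbf{The sign analysis only covers part of the theorem's range, and in that part it ``proves too much''.} Using only $0\le S_1\le N^2$ and $S_2\ge 0$, your identity yields, in the two sign cases,
\[
  N\le\frac{(1-a)(1-b)}{\tfrac1n+ab}\quad\text{if }a+b\le 0,\ \tfrac1n+ab>0,
  \qquad
  N\le\frac{(1-a)(1-b)}{\tfrac1n+ab-(a+b)}\quad\text{if }a+b>0,\ \tfrac1n+ab-(a+b)>0.
\]
Since $\tfrac1n+ab-(a+b)=\tfrac1n-1+(1-a)(1-b)$, the second display is exactly
\[
  \frac{n-1}{n(1-a)(1-b)}+\frac{1}{N}\ge 1,
\]
without the $n+2$. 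That is $n+2$ times \emph{stronger} than the Glazyrin--Yu bound. An elementary argument giving something this much stronger than a theorem of the literature should be a red flag: it means the argument only applies on a restricted parameter set where the stated theorem is far from sharp. And indeed the case you cannot handle is $a+b\le 0$ and $\tfrac1n+ab\le 0$ while $\tfrac1n+ab-(a+b)>0$ (equivalently $n(1-a)(1-b)>n-1$, so the theorem still applies). There, the coefficient $f_0=\tfrac1n+ab$ is nonpositive, so $f_0N^2$ drags the right-hand side of the identity down and nothing in $\{0\le S_1\le N^2,\ S_2\ge 0\}$ rescues it: the inequality $N(1-a)(1-b)\ge f_0N^2+f_1\cdot 0+f_2\cdot 0$ is vacuous when $f_0\le 0$. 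This is not a corner case --- it is precisely the regime the paper uses Theorem~\ref{thm:glazyrin3} for, e.g.\ $D=\{1/13,-5/13\}$ with $n$ large, where $\tfrac1n+ab<0$ once $n\ge 34$.

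\textbf{The proposed source of the $n+2$ does not connect to the identity.} You suggest a Cauchy--Schwarz bound on the Frobenius norm of $H_2'=\bigl((x_i\cdot x_j)^2-\tfrac1n\bigr)_{i,j}$, i.e.\ $\|H_2'\|_F^2\ge(\operatorname{tr}H_2')^2/\operatorname{rank}(H_2')$. That is a genuine inequality involving $\dim\mathrm{Harm}_2=\tfrac{(n-1)(n+2)}{2}$, but it lower-bounds a fourth moment $\sum_{i,j}\bigl((x_i\cdot x_j)^2-\tfrac1n\bigr)^2$, which does not appear in your summed identity (that only contains second moments). You would need a separate mechanism to relate the two, and the sketch does not provide one. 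The actual Glazyrin--Yu proof does use a rank/dimension ingredient involving $\dim\mathrm{Harm}_2$, but it enters through a genuinely different channel (controlling the Gegenbauer coefficient $f_2\,\mathbf 1^{\sf T}H_2\mathbf 1$ via the rank of $H_2$ in combination with the sign of $f_0$), not through a Frobenius-norm Cauchy--Schwarz plugged into the moment identity. As written, the proposal is a correct LP/moment computation in a subregime plus an unsubstantiated hope for the complementary regime, and the latter is where the theorem is actually needed.
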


With this result, they proved the following relative bound, which
provides the best bounds for moderately large values of $n$ (see
Figures~\ref{fig:plot7}--\ref{fig:plot11}):

\begin{theorem}[Glazyrin and Yu~\cite{glazyrin18}]\label{thm:glazyrin4}
If $a \leq 1/3$, then
\begin{equation*}
\begin{aligned}
M_a(n) &\leq n\bigg(\frac{(a^{-1}-1)(a^{-1}+2)^2}{3a^{-1}+5} + 
\frac{(a^{-1}+1)(a^{-1}-2)^2}{3a-5}+2\bigg)+2\\ 
&\leq n\bigg(\frac{2}{3}a^{-2}+\frac{4}{7}\bigg)+2.
\end{aligned}
\end{equation*}
\end{theorem}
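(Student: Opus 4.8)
\emph{Plan.} The first inequality will be proved by a two-step projection argument that reduces estimating $M_a(n)$ to two applications of Theorem~\ref{thm:glazyrin3} (the Glazyrin--Yu two-distance bound), and the second inequality by an elementary comparison of rational functions. Write $N = M_a(n)$ and fix unit vectors $v_0, v_1, \dots, v_{N-1} \in S^{n-1}$ spanning an optimal family of equiangular lines with common angle $\arccos a$, chosen so that $v_i \cdot v_0 = a$ for every $i \geq 1$; this normalization is available because replacing a $v_i$ by $-v_i$ changes neither the line it spans nor the property that all pairwise inner products lie in $\{a,-a\}$. If $N \leq 2$ or $n \leq 2$ the asserted bound is trivial (its right-hand side exceeds $4$, while $M_a(n) \leq M(n) \leq 3$ for $n \leq 2$), so assume $N \geq 3$ and $n \geq 3$.

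First I would perform the first projection: set $w_i = (v_i - a v_0)/\sqrt{1-a^2}$ for $i \geq 1$. A direct computation shows these are $N-1$ distinct points of $S^{n-2}$ (the unit sphere of $v_0^\perp$) and that $w_i \cdot w_j \in \{a/(1+a),\, -a/(1-a)\}$ for $i \neq j$, so $\{w_1,\dots,w_{N-1}\}$ is a two-distance set. Fix $w_1$ and partition the rest as $W^+ = \{w_i : i \geq 2,\ w_i \cdot w_1 = a/(1+a)\}$ and $W^- = \{w_i : i \geq 2,\ w_i \cdot w_1 = -a/(1-a)\}$, so $N = 2 + |W^+| + |W^-|$. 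Now the second projection: projecting $W^+$ orthogonally onto $w_1^\perp$ inside $v_0^\perp$ and renormalizing is injective on $W^+$ and, by another direct computation, produces a two-distance set in $S^{n-3}$ with inner products $\gamma_1 = a/(1+2a)$ and $\gamma_2 = -a(1+3a)/((1-a)(1+2a))$; the same applied to $W^-$ yields a two-distance set in $S^{n-3}$ with inner products $\delta_1 = a(1-3a)/((1+a)(1-2a))$ and $\delta_2 = -a/(1-2a)$. Hence $|W^+| \leq A(n-2,\{\gamma_1,\gamma_2\})$ and $|W^-| \leq A(n-2,\{\delta_1,\delta_2\})$.

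To finish the first inequality I would apply Theorem~\ref{thm:glazyrin3} to each of these. One computes $(1-\gamma_1)(1-\gamma_2) = (1+a)^3/((1-a)(1+2a)^2)$ and $(1-\delta_1)(1-\delta_2) = (1-a)^3/((1+a)(1-2a)^2)$; the hypothesis that the right-hand side of Theorem~\ref{thm:glazyrin3} is positive follows from the identities $(1+a)^3 - (1-a)(1+2a)^2 = a^2(3+5a)$ and $(1-a)^3 - (1+a)(1-2a)^2 = a^2(3-5a)$, both positive for $0 < a \leq 1/3$. Those same identities turn the denominators of the two resulting fractions into $(1-a)(1+2a)^2 + (n-2)a^2(3+5a)$ and $(1+a)(1-2a)^2 + (n-2)a^2(3-5a)$; discarding the first (nonnegative) summand of each gives $|W^+| \leq \frac{(1+a)^3}{a^2(3+5a)}\,n$ and $|W^-| \leq \frac{(1-a)^3}{a^2(3-5a)}\,n$. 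Putting $c = a^{-1}$ and adding, the first claimed bound drops out of the clean identities $(c+1)^3 = (c-1)(c+2)^2 + (3c+5)$ and $(c-1)^3 = (c+1)(c-2)^2 + (3c-5)$. For the second inequality I would simplify the coefficient of $n$ to $(6c^4 - 12c^2 - 10)/(9c^2 - 25)$ and check it is at most $\tfrac{2}{3}c^2 + \tfrac{4}{7} = (14c^2 + 12)/21$, which after clearing the denominators (positive since $c \geq 3$) reduces to $10(c^2-9) \geq 0$.

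The main obstacle is the bookkeeping in the second projection: one must correctly compute the two inner products of each projected two-distance set and then, after passing through Theorem~\ref{thm:glazyrin3}, recognize the outcome as exactly the stated polynomial in $a^{-1}$ — the telescoping identities above are precisely what makes the two contributions collapse to the claimed shape. A minor point to watch is the boundary case $a = 1/3$, where $\delta_1 = 0$ and $\delta_2 = -1$ so the second projection of $W^-$ contains a pair of antipodal points; this is harmless, since Theorem~\ref{thm:glazyrin3} is stated for arbitrary inner products (or one can simply use $A(m,\{0,-1\}) \leq 2m$ there).
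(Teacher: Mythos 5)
The paper states Theorem~\ref{thm:glazyrin4} with a citation to Glazyrin and Yu~\cite{glazyrin18} and does not include a proof, so there is no in-paper argument to compare against. Checked on its own terms, your two-stage projection argument is correct: the inner products after each projection (first $a/(1+a)$ and $-a/(1-a)$, then $\gamma_1,\gamma_2,\delta_1,\delta_2$), the polynomial identities $(c+1)^3=(c-1)(c+2)^2+(3c+5)$ and $(c-1)^3=(c+1)(c-2)^2+(3c-5)$, the positivity of $3\pm 5a$ and $1-2a$ for $0<a\leq 1/3$, the simplification of the Theorem~\ref{thm:glazyrin3} bound by discarding the nonnegative summand $(1\mp a)(1\pm 2a)^2$ in the denominator, and the final reduction of the second inequality to $10(c^2-9)\geq 0$ all check out; the boundary cases ($N\leq 2$, $n\leq 2$, and $a=1/3$ where $\delta_2=-1$) are handled appropriately, and the injectivity of both renormalized projections indeed only needs $a<1/2$. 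One remark worth surfacing explicitly: your derivation yields $3a^{-1}-5$ in the second denominator, which is what it must be --- the expression $3a-5$ as printed in the paper's statement of Theorem~\ref{thm:glazyrin4} is negative for $a\leq 1/3$ and is evidently a typo; your computation silently corrects it.
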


King and Tang~\cite{king16} improved the pillar decomposition
technique and got a better bound for $M_{1/5}(n)$~\cite[Theorem
7]{king16}. Recently, Lin and Yu~\cite{lin19} further improved parts
of their argument; by combining~\cite[Proposition 4.5]{lin19} with the
proof of~\cite[Theorem 7]{king16} we get:

\begin{theorem}[Lin and Yu~\cite{lin19}]\label{thm:lin1}
If $n \geq 63$, then
\begin{equation*}
M_{1/5}(n) \leq 100 + 3 A(n-4, \{1/13, -5/13\}).
\end{equation*}
\end{theorem}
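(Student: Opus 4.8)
The plan is to run Lemmens and Seidel's \emph{pillar decomposition}~\cite{lemmens73}, in the form used in the proof of Theorem~7 of King and Tang~\cite{king16} and refined by Proposition~4.5 of Lin and Yu~\cite{lin19}, with respect to four unit vectors of pairwise inner product $-1/5$. Fix a set of $M_{1/5}(n)$ equiangular lines with common angle $\arccos(1/5)$ together with unit-vector representatives, and let $m$ be the largest integer for which some $m$ of these lines admit representatives with pairwise inner product $-1/5$. Since the Gram matrix of such a set is the $m\times m$ matrix $\tfrac65 I-\tfrac15 J$ ($I$ the identity, $J$ the all-ones matrix), positive semidefiniteness forces $m\le 6$, and the cases $m\in\{5,6\}$ and $m\le 3$ are covered by earlier work (Lemmens and Seidel~\cite{lemmens73}, Neumaier~\cite{neuimaier89}, King and Tang~\cite{king16}, Lin and Yu~\cite{lin19}), which for $n\ge 63$ gives bounds at most $100+3A(n-4,\{1/13,-5/13\})$ --- for instance because $A(n-4,\{1/13,-5/13\})\ge n-4$, since $n-4$ vectors of pairwise inner product $1/13$ fit in $\R^{n-4}$. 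So the substance is the case $m=4$, which I treat from here on: fix such representatives $f_1,\dots,f_4$, put $U=\spann(\{f_1,\dots,f_4\})$ and $W=U^\perp$, and note that $\tfrac65 I-\tfrac15 J$ is nonsingular, so $\dim U=4$ and $\dim W=n-4$.

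Next I would set up the pillars. Any representative $v$ of a line not among the four through $f_1,\dots,f_4$ has $v\cdot f_i\in\{1/5,-1/5\}$ for all $i$, and passing to $-v$ flips the four signs, so the sign pattern $\varepsilon(v)=(\operatorname{sgn}(v\cdot f_i))_{i=1}^4\in\{\pm1\}^4$ is defined up to global sign; this sorts the remaining lines into eight \emph{pillars} indexed by the pairs $\{\varepsilon,-\varepsilon\}$, splitting according to $s=\sum_i\varepsilon_i$ into three pillars with $|s|=0$, four with $|s|=2$, and one with $|s|=4$. Writing $v=v_U+v_W$, the component $v_U=\sum_i c_i f_i$ depends only on $\varepsilon$, through $(\tfrac65 I-\tfrac15 J)c=\tfrac15\varepsilon$; a short computation, using $(\tfrac65 I-\tfrac15 J)^{-1}=\tfrac56 I+\tfrac5{12}J$, gives $\|v_U\|^2=\tfrac2{15}+\tfrac{s^2}{60}$, equal to $\tfrac2{15}$, $\tfrac15$, $\tfrac25$ according as $|s|=0,2,4$. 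Fixing one representative of the sign pattern per pillar, any two lines in one pillar have the same $v_U$, hence $W$-components of squared norm $1-\|v_U\|^2>0$ with inner product $v\cdot v'-\|v_U\|^2\in\{\tfrac15-\|v_U\|^2,\,-\tfrac15-\|v_U\|^2\}$; normalizing by $1-\|v_U\|^2$ this lies in $\{1/13,-5/13\}$ for the $|s|=0$ pillars and in $\{0,-1/2\}$ for the $|s|=2$ pillars. Thus $\ell\mapsto v_W/\|v_W\|$ embeds each $|s|=0$ pillar into a spherical $\{1/13,-5/13\}$-code in $W\cong\R^{n-4}$, so the three of them together hold at most $3A(n-4,\{1/13,-5/13\})$ lines, and the $|s|=4$ pillar is empty: if $v\cdot f_i=1/5$ for all $i$, then $\{-v,f_1,\dots,f_4\}$ is a set of five vectors of pairwise inner product $-1/5$, contradicting $m=4$.

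What remains, and is the hard part, is to bound the four lines through $f_1,\dots,f_4$ together with the four $|s|=2$ pillars by the absolute constant $100$; this is exactly where one combines the pillar estimates of King and Tang~\cite[Theorem~7]{king16} with Proposition~4.5 of Lin and Yu~\cite{lin19}. A first switching argument handles part of it: if two lines in one $|s|=2$ pillar, say with pattern $(+,+,+,-)$, had $v\cdot v'=-1/5$, then $\{v,v',-f_1,-f_2,-f_3\}$ would be five vectors of pairwise inner product $-1/5$, again contradicting $m=4$; hence inside each $|s|=2$ pillar the $W$-components are pairwise orthogonal. This makes each such pillar finite but only bounds its size by $\dim W=n-4$, and upgrading this to a bound independent of $n$ is the crux. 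The idea is to play the $|s|=2$ pillars against one another and against the $|s|=0$ pillars: each $W$-component of an $|s|=2$ pillar is orthogonal to its own pillar and meets the other pillars in prescribed inner products, so a large $|s|=2$ pillar together with $f_1,\dots,f_4$ either forces five vectors of pairwise inner product $-1/5$ or is incompatible --- via positive semidefiniteness or a further switching --- with a large $|s|=0$ pillar, whereas a small $|s|=0$ pillar makes $M_{1/5}(n)$ small to begin with. Converting this dichotomy into the clean bound $100$, valid once $n\ge 63$, is the delicate and most technical part of the argument; by contrast the $|s|=0$-pillar estimate and the emptiness of the $|s|=4$ pillar come for free once the decomposition is in place.
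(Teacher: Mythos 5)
The paper does not actually prove this theorem: it is quoted verbatim as a consequence of combining Proposition~4.5 of Lin and Yu~\cite{lin19} with the proof of Theorem~7 of King and Tang~\cite{king16}, and no argument is reproduced. So what is at issue is whether your reconstruction of that external proof is complete.

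Your setup is correct and matches the cited sources. The case split on $m$ (the maximal number of pairwise $-1/5$ unit vectors), the reduction of $m\le3$ and $m\in\{5,6\}$ to earlier work via $A(n-4,\{1/13,-5/13\})\ge n-4$, the pillar decomposition for $m=4$, the computation $(\tfrac65 I-\tfrac15 J)^{-1}=\tfrac56 I+\tfrac5{12}J$ giving $\|v_U\|^2=\tfrac{2}{15}+\tfrac{s^2}{60}$, the normalized $W$-inner products $\{1/13,-5/13\}$ for $|s|=0$ and $\{0,-1/2\}$ for $|s|=2$, the emptiness of the $|s|=4$ pillar, and the switching argument forcing orthogonality inside each $|s|=2$ pillar are all right, and they account cleanly for the term $3A(n-4,\{1/13,-5/13\})$.

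The genuine gap is exactly where you say it is: you never establish that the four base lines together with the four $|s|=2$ pillars contribute at most the absolute constant $100$. Orthogonality inside an $|s|=2$ pillar only gives the dimension bound $n-4$ per pillar, so as written your argument proves $M_{1/5}(n)\le 4+4(n-4)+3A(n-4,\{1/13,-5/13\})$, which is not the theorem. The missing ingredient is precisely~\cite[Proposition~4.5]{lin19}, which bounds each $|s|=2$ pillar by $24$ (so $4+4\cdot24=100$); this is the step the paper's attribution singles out as Lin and Yu's improvement over~\cite[Theorem~7]{king16}, and your paragraph about ``playing the $|s|=2$ pillars against one another and against the $|s|=0$ pillars'' gestures at it without supplying the counting or semidefiniteness argument that actually yields a constant independent of $n$. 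You should either reproduce that argument or cite the proposition explicitly; as it stands the proposal is an accurate and well-computed outline with the crux left open, which you yourself acknowledge.
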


The previous results give three competing methods to bound
$M_{1/5}(n)$, each one being the best for a different range of
dimensions. One can either use semidefinite programming to bound
$M_{1/5}(n)$ directly, use Theorem~\ref{thm:lin1} together with
semidefinite programming to bound $A\big(n-4, \{1/13, -5/13\}\big)$,
or use Theorem~\ref{thm:glazyrin3}.  King and Tang~\cite{king16} made
this comparison, computing the semidefinite programming
bound~$\Delta_3(G)^*$. See in Table~\ref{table:values-a5} and in
Figure~\ref{fig:plot5} the comparison with the new semidefinite
programming bound $\Delta_6(G)^*$.


Regarding asymptotic results, while it is known that $M(n)$ is
asymptotically quadratic in $n$, for fixed $a$ we have that $M_a(n)$
is linear in $n$. Bukh~\cite{bukh16} was the first to show a bound for
$M_a(n)$ of the form $M_a(n) \leq c n$, although with a large
constant~$c$. Theorem~\ref{thm:glazyrin4} has another linear bound good to give results for intermediate values of $n$, while the best asymptotic result is due to Jiang et al.~\cite{jiang19}. They completely settled the value of $\lim_{n \to \infty} M_a(n) / n$ for every $a$ in terms of a parameter called the \emph{spectral radius order} $r(\lambda)$, which is defined for $\lambda > 0$ as the smallest integer $r$ so there exists a graph with $r$ vertices and adjacency matrix with largest eigenvalue exactly~$\lambda$, and is defined $r(\lambda) = \infty$ in case no such graph exists.

\begin{theorem}[Jiang et al.~\cite{jiang19}]\label{thm:jiang}
Fix $0 < a < 1$. Let $\lambda = (1-a)/(2a)$ and $r = r(\lambda)$ be its spectral radius order. The maximum number $M_a(n)$ of equiangular lines in $\R^n$ with common angle $\arccos a$ satisfies
\begin{enumerate}[(a)]
 \item $M_a(n) = \lfloor r(n-1)/(r-1) \rfloor$ for all sufficiently large $n > n_0(a)$ if $r < \infty$.
 
 \item $M_a(n) = n + o(n)$ as $n \to \infty$ if $r = \infty$.
\end{enumerate}
\end{theorem}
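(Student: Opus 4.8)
Here is how I would prove Theorem~\ref{thm:jiang}; I would not use the semidefinite hierarchy of this paper (which gives numerical bounds for individual $n$, not asymptotics) but the spectral approach of Jiang, Tidor, Yao, Zhang, and Zhao. Write $a = 1/(2\lambda+1)$ and let $v_1,\dots,v_N$ be unit vectors representing $N = M_a(n)$ equiangular lines, so $v_i\cdot v_j = \pm a$ for $i\neq j$. After choosing signs for the $v_i$, let $G$ be the graph on $[N]$ with $i$ adjacent to $j$ when $v_i\cdot v_j = -a$; then the Gram matrix equals $(1-a)I + aJ - 2aA_G$ and is positive semidefinite of rank at most $n$. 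Restricting its quadratic form to the hyperplane orthogonal to the all-ones vector shows the second largest eigenvalue of $A_G$ is at most $\lambda$, and the rank bound shows $\lambda$ is an eigenvalue of $A_G$ of multiplicity at least $N-n-1$. The task is then purely graph-theoretic: bound $N$ given that $G$ has $N$ vertices, second largest eigenvalue at most $\lambda$, and $\lambda$ as an eigenvalue of multiplicity at least $N-n-1$.

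For the matching lower bounds, when $r = r(\lambda) < \infty$ fix a connected graph $H$ on $r$ vertices with largest eigenvalue exactly $\lambda$ and take $G$ to be a disjoint union of about $(n-1)/(r-1)$ copies of $H$: each copy contributes $\lambda$ once, so the associated Gram matrix has rank roughly $\tfrac{r-1}{r}N+1\leq n$, yielding about $\tfrac{r}{r-1}(n-1)$ lines; a small refinement (padding with a few extra vectors, using that $\lambda$ is an algebraic integer) sharpens this to $\lfloor r(n-1)/(r-1)\rfloor$. When $r=\infty$ one only needs a construction with $n-o(n)$ lines.

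For the upper bound, decompose $G$ into connected components. Since the second largest eigenvalue is at most $\lambda$, two disjoint components cannot both have largest eigenvalue exceeding $\lambda$, so at most one component $C_0$ does; in every other component $C$ the value $\lambda$ can occur only as the largest eigenvalue, in which case Perron--Frobenius makes it simple and forces $|C|\geq r$. Moreover, if a connected graph has $\lambda$ as its \emph{second} largest eigenvalue, then splitting its vertex set by the sign of a corresponding eigenvector and applying the subinvariance part of Perron--Frobenius to each part produces two vertex-disjoint induced subgraphs with largest eigenvalue at least $\lambda$, the smaller of which must equal $\lambda$; hence $r(\lambda)<\infty$. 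Consequently, when $r=\infty$ the eigenvalue $\lambda$ never appears, so $N-n-1\leq 0$ and $M_a(n)\leq n+1$, which with the lower bound gives part~(b). When $r<\infty$, counting components gives
\[
  N - n - 1 \;\leq\; \frac{N - |C_0|}{r} + m_\lambda(C_0),
\]
where $m_\lambda(C_0)$ is the multiplicity of the second largest eigenvalue of $C_0$; the extremal configurations, which look like many disjoint copies of $H$ lightly threaded together, show one needs $m_\lambda(C_0)\leq |C_0|/r + o(|C_0|)$ to conclude.

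Establishing this multiplicity estimate is the heart of the proof and the step I expect to be the main obstacle. One first proves a bounded-degree reduction: the closed neighborhoods of two far-apart high-degree vertices are disjoint and each induces a subgraph containing a large star, hence with largest eigenvalue exceeding $\lambda$, contradicting the bound; a more careful version confines the ``wild'' high-degree part of $C_0$ to a set that can be deleted with negligible loss of eigenvalue multiplicity. One is then reduced to the genuinely difficult statement that a connected graph of bounded maximum degree on $m$ vertices has second-eigenvalue multiplicity $O(m/\log\log m)$, proved by a delicate analysis of the local behavior of eigenvectors combined with a counting argument over balls in the graph. Substituting this back gives $N\leq\tfrac{r}{r-1}n(1+o(1))$ and hence the asymptotics in part~(a). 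Finally, to pin down the exact value $\lfloor r(n-1)/(r-1)\rfloor$ for all $n>n_0(a)$, one proves a stability statement — near-extremal configurations are, after discarding $o(n)$ points, honest disjoint unions of $r$-vertex graphs with largest eigenvalue $\lambda$ — and finishes with an exact rank count; this last part is substantial but routine next to the multiplicity bound.
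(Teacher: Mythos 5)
The paper itself does not prove this theorem --- it is cited from Jiang, Tidor, Yao, Zhang, and Zhao \cite{jiang19} --- so there is no internal argument to compare against, and you are right that one must go outside the semidefinite hierarchy. Your sketch does track the overall structure of the Jiang et al.\ proof: translating to a graph spectral problem through the Gram matrix $(1-a)I + aJ - 2aA_G$ of rank at most $n$ with $\lambda_2(A_G)\leq\lambda$, decomposing into connected components (at most one of which can have spectral radius exceeding $\lambda$), reducing to bounded degree, and then invoking their theorem that a connected bounded-degree graph on $m$ vertices has second-eigenvalue multiplicity $O(m/\log\log m)$. That identification of the key technical lemma and of the stability argument needed to pin down the exact floor expression is accurate.

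However, your treatment of part~(b) has a genuine error. You assert that if a connected graph has $\lambda$ as its second largest eigenvalue, then splitting by eigenvector sign yields two vertex-disjoint induced subgraphs each with spectral radius at least $\lambda$, and that interlacing forces one of them to have spectral radius exactly $\lambda$, so $r(\lambda)<\infty$. The interlacing step is false: two vertex-disjoint induced subgraphs with spectral radii $\geq\mu$ do \emph{not} imply $\lambda_2(G)\geq\mu$ when there are edges between them. The path $P_4$ refutes your claim: $\lambda := \lambda_2(P_4) = 2\cos(2\pi/5) \approx 0.618$, the sign-split pieces are the two edges $\{1,2\}$ and $\{3,4\}$, each with spectral radius $1 > \lambda$, yet $\lambda_2(P_4) = \lambda < 1$; and by Smith's classification of connected graphs of spectral radius less than $2$ (paths and Dynkin diagrams, whose radii are $2\cos(\pi/k)$ for integer $k$), no graph has spectral radius exactly $2\cos(2\pi/5)$, so $r(\lambda)=\infty$. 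Consequently your conclusion that $\lambda$ ``never appears'' and hence $M_a(n)\leq n+1$ when $r=\infty$ is unsupported and overshoots what Jiang et al.\ actually prove: they obtain only $n+o(n)$, and even in the $r=\infty$ case the sublinear multiplicity theorem is needed to control the multiplicity of $\lambda$ as a non-top eigenvalue of the single large component $C_0$.
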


Jiang et al. remarks that the $n_0(a)$ from their theorem may be really big, though. When $a = 1/(2r-1)$ for some positive integer $r$, then $\lambda = r-1$ and $r(\lambda) = r$ (since the complete graph on $r$ vertices has spectral radius $r-1$). Theorem~\ref{thm:jiang} confirms a conjecture made by Bukh~\cite{bukh16}:

\begin{corollary}[Jiang et al.~\cite{jiang19}]\label{conj:bukh}
If $a = 1/(2r-1)$ for some positive integer $r \geq 2$, then for all $n$ sufficiently large,
\begin{equation*}
 M_{a}(n) = \left\lfloor \frac{r(n-1)}{r-1} \right\rfloor .
\end{equation*} 
\end{corollary}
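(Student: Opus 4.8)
The plan is to deduce this directly from Theorem~\ref{thm:jiang}, so the only real content is to identify the spectral radius order $r(\lambda)$ attached to the angle $a = 1/(2r-1)$. First I would compute $\lambda = (1-a)/(2a)$ for this value of $a$: substituting $a = 1/(2r-1)$ gives
\[
  \lambda = \Bigl(1 - \tfrac{1}{2r-1}\Bigr)\cdot\frac{2r-1}{2} = \frac{2r-2}{2} = r-1,
\]
so we are in the case $\lambda = r-1$.

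Next I would show that $r(r-1) = r$, i.e.\ that the smallest number of vertices of a graph whose adjacency matrix has largest eigenvalue exactly $r-1$ is $r$. For the upper bound $r(r-1) \le r$, exhibit the complete graph $K_r$: its adjacency matrix is $J - I$ of size $r$, whose eigenvalues are $r-1$ (once) and $-1$ (with multiplicity $r-1$), so its largest eigenvalue is exactly $r-1$. For the lower bound $r(r-1) \ge r$, recall the elementary spectral fact that a graph $H$ on $s$ vertices has largest adjacency eigenvalue at most its maximum degree, hence at most $s-1$; thus if the largest eigenvalue equals $r-1$ then $s-1 \ge r-1$, i.e.\ $s \ge r$. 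Combining the two bounds, $r(r-1) = r$, which in particular is finite.

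Finally I would invoke Theorem~\ref{thm:jiang}(a) with $\lambda = r-1$: since $r = r(\lambda) < \infty$, it yields $M_a(n) = \lfloor r(n-1)/(r-1)\rfloor$ for all $n > n_0(a)$, which is precisely the claimed equality. There is no serious obstacle here; the only point needing a short argument rather than a direct quotation is the evaluation $r(r-1) = r$, and within that the slightly less immediate direction is the lower bound, for which one uses $\lambda_{\max}(H) \le \Delta(H) \le |V(H)| - 1$.
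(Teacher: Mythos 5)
Your proof matches the paper's approach exactly: compute $\lambda = (1-a)/(2a) = r-1$, identify $r(\lambda) = r$ using the complete graph $K_r$, and invoke Theorem~\ref{thm:jiang}(a). You additionally make explicit the lower bound $r(r-1) \ge r$ via the fact that the spectral radius of a graph on $s$ vertices is at most $s-1$, a detail the paper leaves implicit in its remark preceding the corollary.
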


There is a simple construction that achieves the value from Corollary~\ref{conj:bukh}. Let $a = 1/(2r-1)$ for some positive integer $r$ and $t, s$ be arbitrary positive integers. Then one can show that a matrix with~$t$ diagonal blocks, each of size $r$, and $s$ diagonal blocks of size $1$, with diagonal entries equal to~1, off-diagonal entries inside each block equal to~$-a$, and all other entries equal to~$a$ is the Gram matrix of a $\{-a,a\}$-code in $S^{(r-1)t+s}$ of size $rt+s$. Letting $t = \lfloor (n-1)/(r-1) \rfloor$ and $s = n-1 - (r-1)\lfloor (n-1)/(r-1) \rfloor$ we get the desired size.


\subsection{New semidefinite programming bounds}
\label{sec:newsdp}

As observed in Section~\ref{sec:r-distance-sets}, the semidefinite
programming bounds computed by Barg and Yu~\cite{barg14} and King and
Tang~\cite{king16} correspond to $\Delta_3(G)^*$. In this paper we
compute new upper bounds for $M_a(n)$ for $a = 1/5$, $1/7$, $1/9$,
and~$1/11$ and many values of $n$ using $k = 4$, $5$, and~$6$. We
always use degree $d = 5$ for the polynomials since, as reported by
Barg and Yu~\cite{barg13}, no improvement is observed for larger
values of $d$ (but this may change if sets~$D$ with cardinality
greater than~$2$ are considered). The semidefinite programs were
produced using a script written in Julia~\cite{bezanson17} using
  Nemo~\cite{fieker17}, were solved with SDPA-GMP~\cite{nakata10},
and the results were rigorously verified using the interval arithmetic
library Arb~\cite{johansson17}. The rigorous verification procedure is
much simpler than that for similar problems~\cite{dostert17}. The
scripts used to generate the programs and verify the results can be
found with the arXiv version of this paper.

The results are presented in Figures \ref{fig:plot5}--\ref{fig:plot11}
and Tables~\ref{table:values-a5}--\ref{table:values-a11}, where we
compile the bounds for $M_a(n)$ for each~$n$ that is a multiple
of~$5$; the best bounds are displayed in boldface.  While it takes
only a few seconds to generate and solve a single instance of the
semidefinite programming problem for~$k = 3$, the process takes
about~5 days using a single core of an Intel i7-8650U processor 
for~$k = 6$; that is why the tables have some missing values 
for~$\Delta_6(G)^*$.

No improvements were obtained for $n \leq 3/a^2 -16$; we observed in
this case that $\Delta_6(G)^* = \Delta_3(G)^*$ which is equal to the
values given by Theorems~\ref{thm:lint} and~\ref{thm:yu}. Since this
is the range of dimensions that influences $M(n)$, no improvements for
$M(n)$ were obtained.
We obtained great improvements for all dimensions $n > 3/a^2 -16$,
making the semidefinite programming bound competitive with the other
methods (like Theorem~\ref{thm:glazyrin4}) for more
dimensions. Asymptotically, the semidefinite programming bounds behave
badly, loosing even to Gerzon's bound.

In particular, we improved the range of dimensions for which the bound
remains stable, showing that $n = 3/a^2 -16$ from Theorem~\ref{thm:yu}
is not optimal.  Table~\ref{table:new-ranges} shows how much this
range is increased for the values of $a$ considered. This observation
motivates the following two questions, where~$a$ is such that~$1/a$ is
an odd integer:

\begin{enumerate}
\item What is the smallest~$n$ such that
  $M_a(n) = (1/a^2-2)(1/a^2-1)/2$?

\item What is the smallest~$n$ such that
  $M_a(n) > (1/a^2-2)(1/a^2-1)/2$?
\end{enumerate}

\begin{table}[t]
\begin{center}{ 
\begin{tabular}{@{}cccccc@{}}
\toprule
$\bm{a}$ & $\bm{(1/a^2-2)(1/a^2-1)/2}$ & $\bm{\Delta_3(G)^*}$ {\footnotesize\cite{barg14, king16}} & $\bm{\Delta_4(G)^*}$ & 
$\bm{\Delta_5(G)^*}$ & $\bm{\Delta_6(G)^*}$ \\\midrule
$1/5$ & $276$ & $60$ & $65$ & $69$ & $70$ \\\midrule
$1/7$ & $1128$ & $131$ & $145$ & $158$ & $169$ \\\midrule
$1/9$ & $3160$ & $227$ & $251$ & $273$ & $300$ \\\midrule
$1/11$ & $7140$ & $347$ & $381$ & $413$ & $448$\\\bottomrule
\end{tabular}}
\end{center}\bigskip
\caption{By considering~$\Delta_k(G)^*$ for $k \geq 4$ we find out
  that the maximum dimension~$n$ for which the bound
  $M_a(n) \leq (1/a^2-\nobreak 2)(1/a^2-1)/2$ is valid is larger
  than~$3/a^2-16$ as given by Theorem~\ref{thm:yu}
  and~$\Delta_3(G)^*$; the table shows the improved dimensions.}
\label{table:new-ranges}
\end{table}

Question~(1) is the more interesting of the two since if the
smallest~$n$ is~$1/a^2 - 2$, then Gerzon's bound is
attained. Theorem~\ref{thm:glazyrin2} makes progress in this
direction, showing that Gerzon's bound is also attained if the
smallest~$n$ is at most~$3/a^2-16$; this is known not to be the case
for many $a$ (due to the nonexistence of some tight spherical
5-designs, as mentioned after Theorem~\ref{thm:gerzon}), which implies
$M_{1/7}(n) \leq 1127$ for $n \leq 131$ and $M_{1/9}(n) \leq 3159$ for
$n \leq 227$.  Table~\ref{table:new-ranges} also suggests that the
constraint $n \leq 3/a^2-16$ in Theorem~\ref{thm:glazyrin2} may not be
optimal.

Question~(2) seems interesting because Table~\ref{table:new-ranges}
shows that $n = 3/a^2 - 15$ is not a good candidate solution. In fact,
the smallest~$n$ is likely much larger, as suggested by
Conjecture~\ref{conj:seidel} for~$M_{1/5}(n)$ and the construction
described after Corollary~\ref{conj:bukh}. Using this construction,
we know that $(1/a^2-2)(1/a^2-1)/2$ is achieved when
$n = (1/a^2-2)(1/a-1)^2/2+1$, which corresponds to the dimensions
$185$, $847$, $2529$, and $5951$ for $a = 1/5$, $1/7$, $1/9$, and
$1/11$ respectively.

We also improve the bounds computed by King and Tang~\cite{king16} for
$M_{1/5}(n)$ by replacing their theorem~\cite[Theorem 7]{king16} by
Theorem~\ref{thm:lin1} and by using $\Delta_6(G)^*$ to compute better
bounds for $A(n, \{1/13, -5/13\})$. Lin and Yu~\cite{lin19} observed
that $A(n, \{1/13, -5/13\}) \geq 3n/2-3$ and therefore there is a
limit to the power of this approach: it will never be able to prove
Conjecture~\ref{conj:seidel} no matter how much we increase $k$. In
general, it is not clear how good the bound $\Delta_k(G)^*$ can
be for $M_{a}(n)$ if one allows $k$ to increase; in contrast, de Laat
and Vallentin~\cite[Theorem 2]{laat15} show that their version of the
Lasserre hierarchy for compact topological packing graphs converges to
the independence number if enough steps are computed. Whether such a
convergence result holds for $\Delta_k(G)^*$ is an open question; in
any case, it takes days to compute~$\Delta_k(G)^*$ for~$k = 6$, so one
can expect that solving the resulting semidefinite programs
for~$k > 6$ will be hard in practice.

\begin{figure}[p]
\centering
\includegraphics[width=\textwidth]{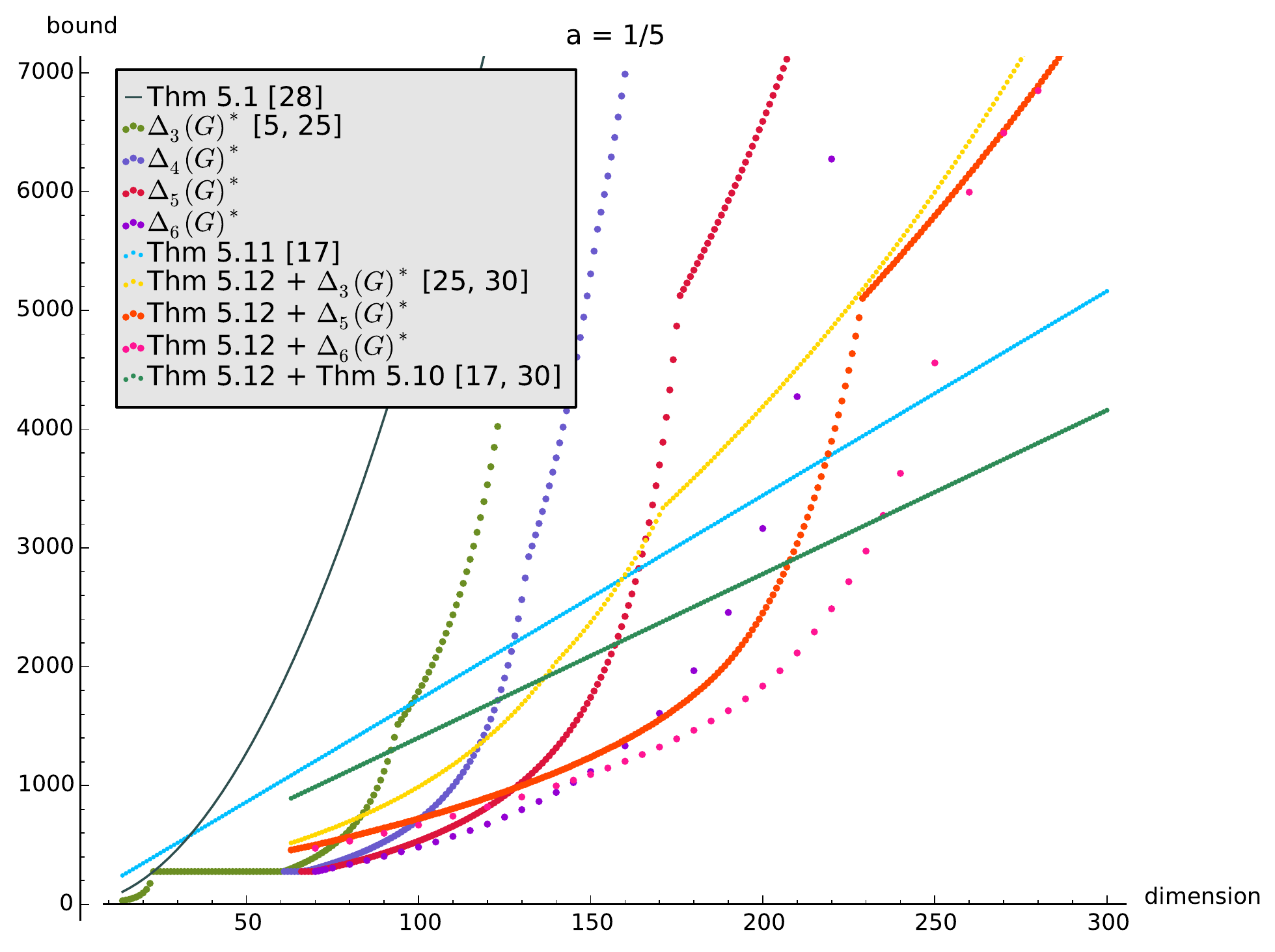}
\caption{Relative bounds for $M_{1/5}(n)$. In fact, King and
  Tang~\cite{king16} computed a bound using $\Delta_3(G)^*$ together
  with a theorem~\cite[Theorem 7]{king16}\label{fig:plot5} weaker than
  Theorem~\ref{thm:lin1}; the result is similar though.}
\end{figure}

\begin{figure}[p]
\centering
\includegraphics[width=\textwidth]{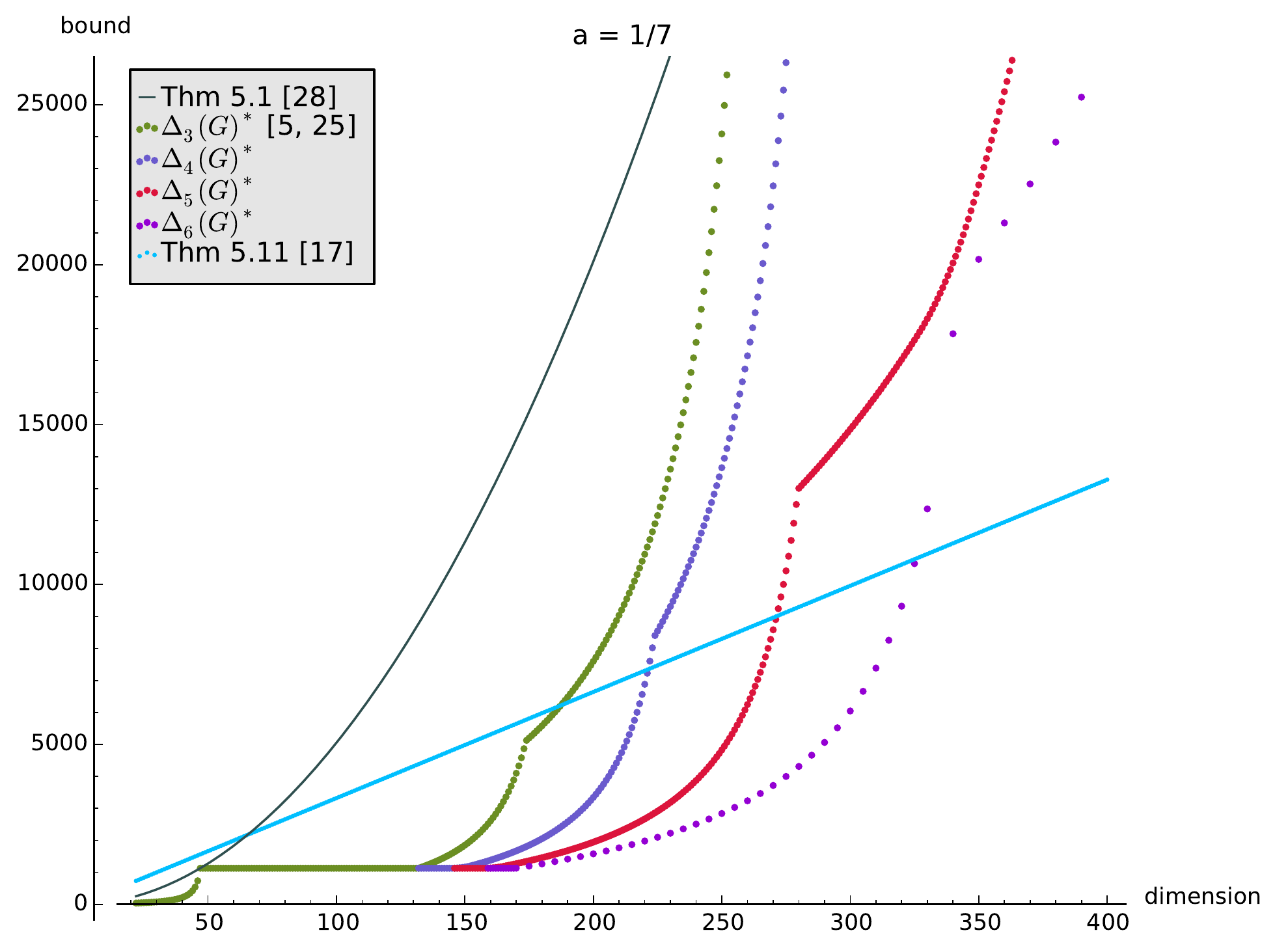}
\caption{Relative bounds for $M_{1/7}(n)$.}\label{fig:plot7}
\end{figure}

\begin{figure}[p]
\centering
\includegraphics[width=\textwidth]{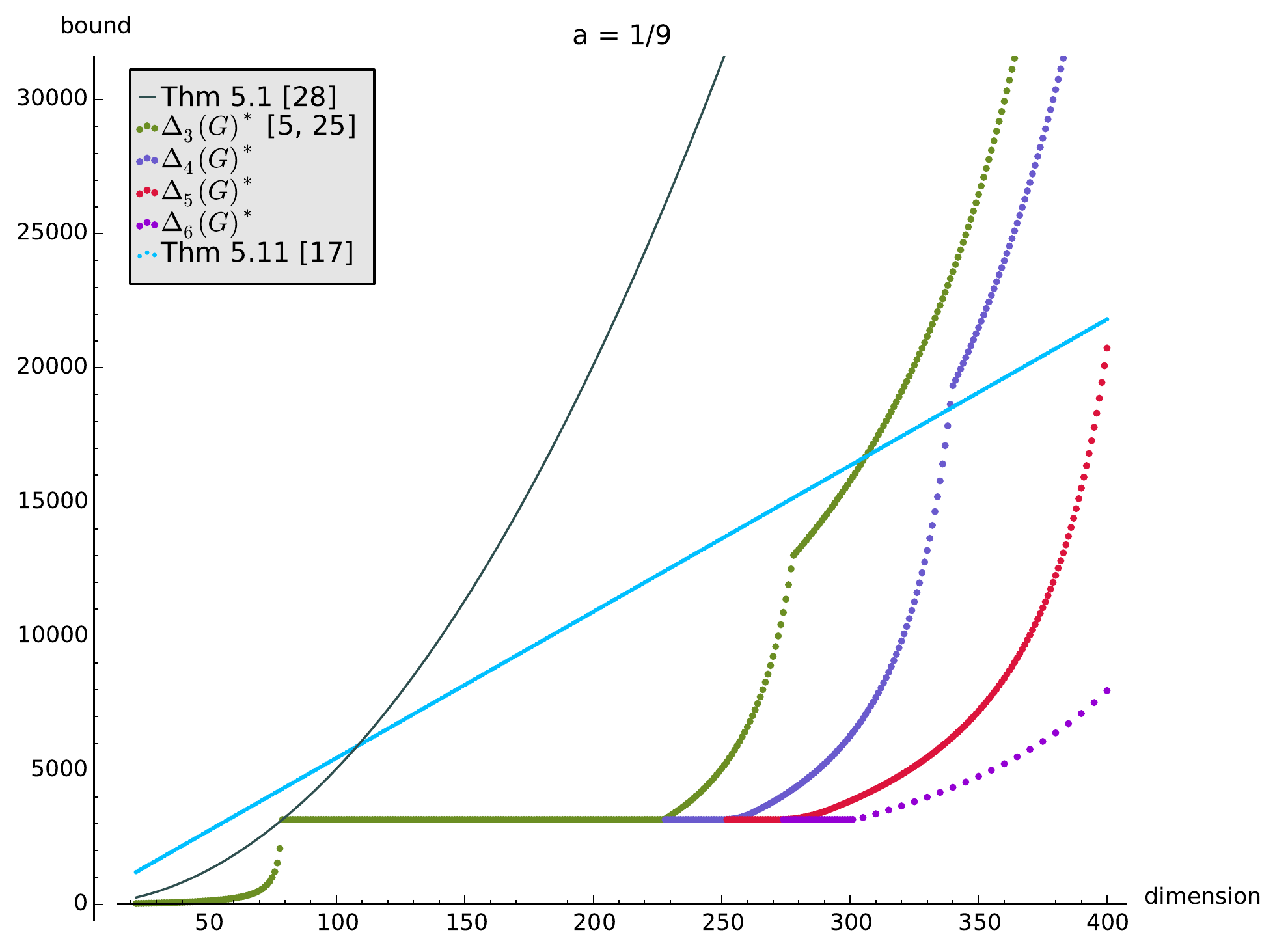}
\caption{Relative bounds for $M_{1/9}(n)$.}\label{fig:plot9}
\end{figure}

\begin{figure}[p]
\centering
\includegraphics[width=\textwidth]{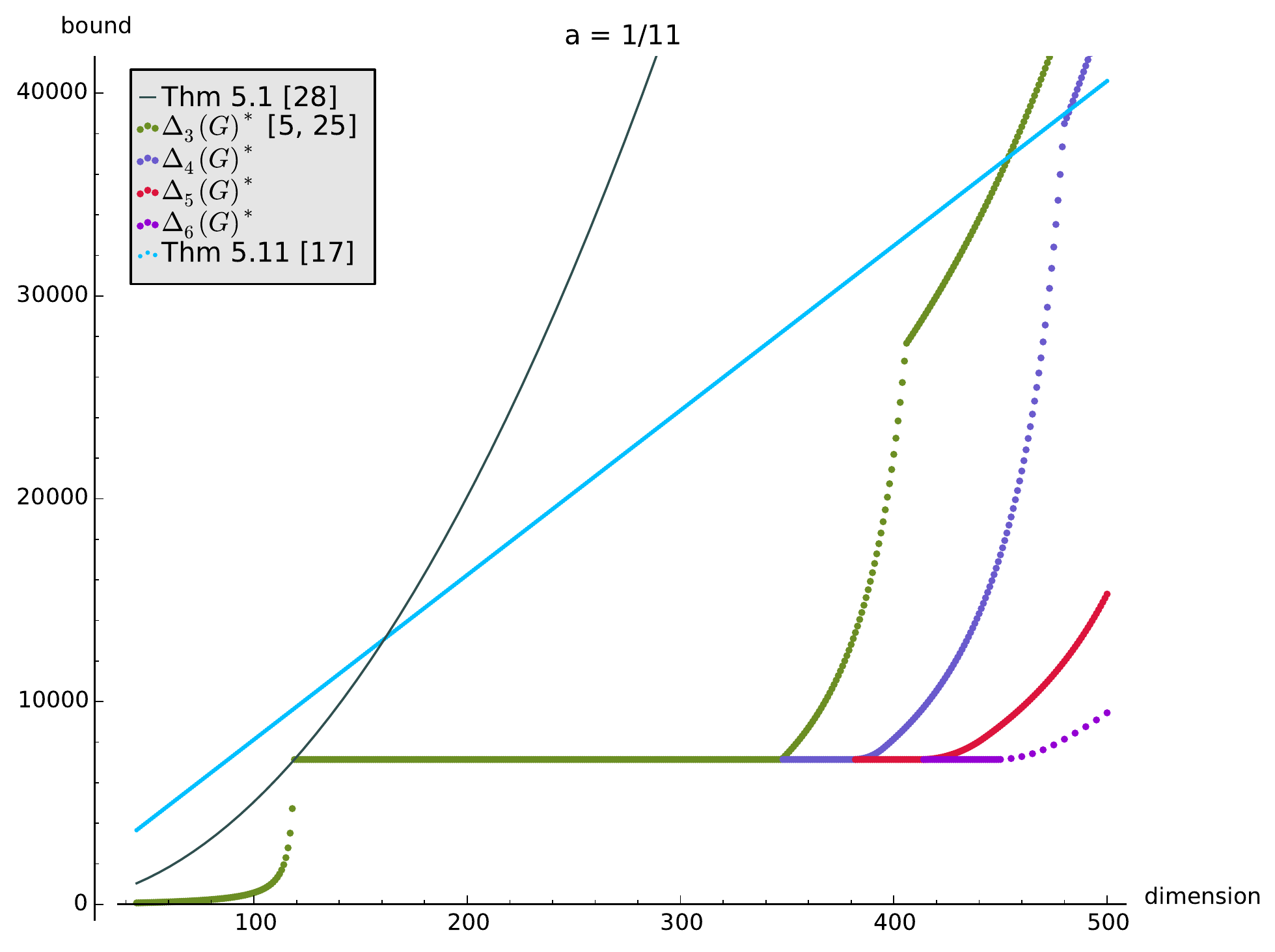}
\caption{Relative bounds for $M_{1/11}(n)$.}\label{fig:plot11}
\end{figure}

\begin{table}[p]\small
\begin{adjustbox}{width=1\textwidth,center}
\begin{tabular}{@{}cccccccc@{}}
\toprule
$\bm{n}$ & $\bm{\Delta_3(G)^*}$~\cite{barg14, king16} & $\bm{\Delta_4(G)^*}$ & 
$\bm{\Delta_5(G)^*}$ & $\bm{\Delta_6(G)^*}$ & 
\textbf{Thm~\ref{thm:lin1}~\cite{lin19}} & 
\textbf{Thm~\ref{thm:lin1}~\cite{lin19}} & 
\textbf{Thm~\ref{thm:lin1}~\cite{lin19}} \\
 & & & & & 
\textbf{ + } $\bm{\Delta_5(G)^*}$ & 
\textbf{ + } $\bm{\Delta_6(G)^*}$ & 
\textbf{+ Thm~\ref{thm:glazyrin3}~\cite{glazyrin18}} 
\\\midrule
$60$ & $\bm{276}$ & $276$ & $276$ & $276$ &  &  &  \\\midrule
$65$ & $326$ & $\bm{276}$ & $276$ & $276$ & $469$ &  & $920$  \\\midrule
$70$ & $398$ & $301$ & $278$ & $\bm{276}$ & $499$ & $472$ & $989$  \\\midrule
$75$ & $494$ & $346$ & $312$ & $\bm{305}$ & $532$ &  & $1057$  \\\midrule
$80$ & $626$ & $397$ & $348$ & $\bm{336}$ & $568$ & $532$ & $1126$  \\\midrule
$85$ & $816$ & $456$ & $388$ & $\bm{369}$ & $604$ &  & $1195$  \\\midrule
$90$ & $1120$ & $526$ & $431$ & $\bm{404}$ & $643$ & $598$ & $1264$  \\\midrule
$95$ & $1556$ & $609$ & $479$ & $\bm{442}$ & $679$ &  & $1333$  \\\midrule
$100$ & $1790$ & $710$ & $532$ & $\bm{482}$ & $721$ & $667$ & $1402$  \\\midrule
$105$ & $2077$ & $836$ & $591$ & $\bm{525}$ & $763$ &  & $1471$  \\\midrule
$110$ & $2437$ & $994$ & $657$ & $\bm{572}$ & $805$ & $742$ & $1540$  \\\midrule
$115$ & $2904$ & $1203$ & $732$ & $\bm{621}$ & $850$ &  & $1609$  \\\midrule
$120$ & $3532$ & $1489$ & $817$ & $\bm{675}$ & $898$ & $820$ & $1677$  \\\midrule
$125$ & $4419$ & $1905$ & $915$ & $\bm{734}$ & $946$ &  & $1746$  \\\midrule
$130$ & $5770$ & $2565$ & $1028$ & $\bm{797}$ & $1000$ & $904$ & $1815$  \\\midrule
$135$ & $8076$ & $3206$ & $1160$ & $\bm{866}$ & $1054$ &  & $1884$  \\\midrule
$140$ & $12896$ & $3759$ & $1317$ & $\bm{942}$ & $1111$ & $997$ & $1953$  \\\midrule
$145$ & $29280$ & $4450$ & $1508$ & $\bm{1025}$ & $1174$ & $1045$ & $2022$  \\\midrule
$150$ &  & $5307$ & $1742$ & $1117$ & $1237$ & $\bm{1093}$ & $2091$  \\\midrule
$155$ &  & $6131$ & $2038$ &  & $1309$ & $\bm{1147}$ & $2160$  \\\midrule
$160$ &  & $6989$ & $2424$ & $1334$ & $1384$ & $\bm{1204}$ & $2229$  \\\midrule
$165$ &  & $8005$ & $2948$ &  & $1465$ & $\bm{1261}$ & $2298$  \\\midrule
$170$ &  & $9166$ & $3699$ & $1608$ & $1555$ & $\bm{1324}$ & $2367$  \\\midrule
$175$ &  & $10401$ & $4868$ &  & $1654$ & $\bm{1393}$ & $2436$  \\\midrule
$180$ &  & $11833$ & $5339$ & $1967$ & $1765$ & $\bm{1465}$ & $2504$  \\\midrule
$185$ &  & $13552$ & $5623$ &  & $1891$ & $\bm{1543}$ & $2573$  \\\midrule
$190$ &  & $15647$ & $5925$ & $2457$ & $2041$ & $\bm{1630}$ & $2642$  \\\midrule
$195$ &  & $18244$ & $6247$ &  & $2224$ & $\bm{1729}$ & $2711$  \\\midrule
$200$ &  & $21531$ & $6591$ & $3164$ & $2452$ & $\bm{1837}$ & $2780$  \\\midrule
$205$ &  & $25795$ & $6960$ &  & $2719$ & $\bm{1966}$ & $2849$  \\\midrule
$210$ &  & $31508$ & $7356$ & $4274$ & $3037$ & $\bm{2116}$ & $2918$  \\\midrule
$215$ &  & $39487$ & $7783$ &  & $3421$ & $\bm{2293}$ & $2987$  \\\midrule
$220$ &  & $51276$ & $8243$ & $6274$ & $3898$ & $\bm{2488}$ & $3056$  \\\midrule
$225$ &  & $70170$ & $8741$ &  & $4495$ & $\bm{2716}$ & $3125$  \\\midrule
$230$ &  & $104611$ & $9281$ & $8667$ & $5134$ & $\bm{2974}$ & $3194$  \\\midrule
$235$ &  &  & $9870$ &  & $5296$ & $3274$ & $\bm{3263}$  \\\midrule
$240$ &  &  & $10514$ & $9407$ & $5458$ & $3628$ & $\bm{3332}$  \\\midrule
$245$ &  &  & $11221$ &  & $5626$ &  & $\bm{3401}$  \\\midrule
$250$ &  &  & $12001$ & $10226$ & $5797$ & $4558$ & $\bm{3469}$  \\\midrule
$255$ &  &  & $12865$ &  & $5971$ &  & $\bm{3538}$  \\\midrule
$260$ &  &  & $13828$ & $11137$ & $6148$ & $5995$ & $\bm{3607}$  \\\midrule
$265$ &  &  & $14908$ &  & $6328$ &  & $\bm{3676}$  \\\midrule
$270$ &  &  & $16128$ & $12155$ & $6511$ & $6493$ & $\bm{3745}$  \\\midrule
$275$ &  &  & $17516$ &  & $6700$ &  & $\bm{3814}$  \\\midrule
$280$ &  &  & $19122$ & $13302$ & $6889$ & $6850$ & $\bm{3883}$  \\\midrule
$285$ &  &  & $21199$ &  & $7084$ &  & $\bm{3952}$  \\\midrule
$290$ &  &  & $23982$ & $14601$ & $7285$ & $7219$ & $\bm{4021}$  \\\midrule
$295$ &  &  & $27058$ &  & $7489$ &  & $\bm{4090}$  \\\midrule
$300$ &  &  & $30840$ & $16086$ & $7696$ & $7600$ & $\bm{4159}$  \\\bottomrule
\end{tabular}
\end{adjustbox}\bigskip
\caption{Upper bounds for $M_{1/5}(n)$ by diverse methods, including new results 
with $\Delta_6(G)^*$. The best bound in each dimension is in boldface.} 
\label{table:values-a5}
\end{table}

\begin{table}[p]\small
\begin{adjustbox}{width=1.25\textwidth,center}
\begin{tabular}{@{}ccccccccccccc@{}}
\toprule
$\bm{n}$ & $\bm{\Delta_3(G)^*}$~\cite{barg14, king16} & $\bm{\Delta_4(G)^*}$ & 
$\bm{\Delta_5(G)^*}$ & $\bm{\Delta_6(G)^*}$ & 
\textbf{Thm~\ref{thm:glazyrin4}~\cite{glazyrin18}} & &
$\bm{n}$ & $\bm{\Delta_3(G)^*}$~\cite{barg14, king16} & $\bm{\Delta_4(G)^*}$ & 
$\bm{\Delta_5(G)^*}$ & $\bm{\Delta_6(G)^*}$ & 
\textbf{Thm~\ref{thm:glazyrin4}~\cite{glazyrin18}} \\\cmidrule{1-6}\cmidrule{8-13}
$125$ & $\bm{1128}$ &$1128$ &$1128$ &$1128$ &$4151$ && $265$ & $49145$ & $19501$ & $7254$ & $\bm{3465}$ & $8797$ \\\cmidrule{1-6}\cmidrule{8-13}
$130$ & $\bm{1128}$ &$1128$ &$1128$ &$1128$ &$4317$ && $270$ & $72667$ & $22466$ & $8584$ & $\bm{3717}$ & $8963$ \\\cmidrule{1-6}\cmidrule{8-13}
$135$ & $1218$ &$\bm{1128}$ &$1128$ &$1128$ &$4482$ && $275$ & $135319$ & $26319$ & $10427$ & $\bm{3998}$ & $9129$ \\\cmidrule{1-6}\cmidrule{8-13}
$140$ & $1387$ &$\bm{1128}$ &$1128$ &$1128$ &$4648$ && $280$ &  & $31427$ & $13008$ & $\bm{4311}$ & $9295$ \\\cmidrule{1-6}\cmidrule{8-13}
$145$ & $1593$ &$\bm{1128}$ &$1128$ &$1128$ &$4814$ && $285$ &  & $36793$ & $13442$ & $\bm{4663}$ & $9461$ \\\cmidrule{1-6}\cmidrule{8-13}
$150$ & $1850$ &$1163$ &$\bm{1128}$ &$1128$ &$4980$ && $290$ &  & $44064$ & $13893$ & $\bm{5062}$ & $9627$ \\\cmidrule{1-6}\cmidrule{8-13}
$155$ & $2178$ &$1262$ &$\bm{1128}$ &$1128$ &$5146$ && $295$ &  & $54538$ & $14363$ & $\bm{5519}$ & $9793$ \\\cmidrule{1-6}\cmidrule{8-13}
$160$ & $2611$ &$1381$ &$1135$ &$\bm{1128}$ &$5312$ && $300$ &  & $70925$ & $14853$ & $\bm{6045}$ & $9959$ \\\cmidrule{1-6}\cmidrule{8-13}
$165$ & $3211$ &$1517$ &$1188$ &$\bm{1128}$ &$5478$ && $305$ &  & $100201$ & $15364$ & $\bm{6660}$ & $10125$ \\\cmidrule{1-6}\cmidrule{8-13}
$170$ & $4098$ &$1670$ &$1271$ &$\bm{1131}$ &$5644$ && $310$ &  &  & $15897$ & $\bm{7386}$ & $10291$ \\\cmidrule{1-6}\cmidrule{8-13}
$175$ & $5199$ &$1846$ &$1361$ &$\bm{1195}$ &$5810$ && $315$ &  &  & $16453$ & $\bm{8257}$ & $10457$ \\\cmidrule{1-6}\cmidrule{8-13}
$180$ & $5582$ &$2051$ &$1458$ &$\bm{1264}$ &$5976$ && $320$ &  &  & $17035$ & $\bm{9322}$ & $10623$ \\\cmidrule{1-6}\cmidrule{8-13}
$185$ & $6006$ &$2290$ &$1564$ &$\bm{1336}$ &$6142$ && $325$ &  &  & $17644$ & $\bm{10653}$ & $10789$ \\\cmidrule{1-6}\cmidrule{8-13}
$190$ & $6477$ &$2575$ &$1679$ &$\bm{1412}$ &$6308$ && $330$ &  &  & $18309$ & $12364$ & $\bm{10955}$ \\\cmidrule{1-6}\cmidrule{8-13}
$195$ & $7005$ &$2919$ &$1805$ &$\bm{1492}$ &$6474$ && $335$ &  &  & $19106$ &  & $\bm{11121}$ \\\cmidrule{1-6}\cmidrule{8-13}
$200$ & $7597$ &$3342$ &$1944$ &$\bm{1578}$ &$6640$ && $340$ &  &  & $20053$ & $17840$ & $\bm{11287}$ \\\cmidrule{1-6}\cmidrule{8-13}
$205$ & $8269$ &$3878$ &$2097$ &$\bm{1668}$ &$6806$ && $345$ &  &  & $21178$ &  & $\bm{11453}$ \\\cmidrule{1-6}\cmidrule{8-13}
$210$ & $9035$ &$4575$ &$2267$ &$\bm{1765}$ &$6972$ && $350$ &  &  & $22494$ & $20168$ & $\bm{11619}$ \\\cmidrule{1-6}\cmidrule{8-13}
$215$ & $9918$ &$5522$ &$2457$ &$\bm{1868}$ &$7138$ && $355$ &  &  & $23893$ &  & $\bm{11785}$ \\\cmidrule{1-6}\cmidrule{8-13}
$220$ & $10946$ &$6880$ &$2670$ &$\bm{1978}$ &$7304$ && $360$ &  &  & $25410$ & $21307$ & $\bm{11951}$ \\\cmidrule{1-6}\cmidrule{8-13}
$225$ & $12158$ &$8548$ &$2911$ &$\bm{2096}$ &$7470$ && $365$ &  &  & $27077$ &  & $\bm{12117}$ \\\cmidrule{1-6}\cmidrule{8-13}
$230$ & $13608$ &$9314$ &$3187$ &$\bm{2223}$ &$7636$ && $370$ &  &  & $28923$ & $22525$ & $\bm{12283}$ \\\cmidrule{1-6}\cmidrule{8-13}
$235$ & $15374$ &$10181$ &$3504$ &$\bm{2359}$ &$7802$ && $375$ &  &  & $30981$ &  & $\bm{12449}$ \\\cmidrule{1-6}\cmidrule{8-13}
$240$ & $17571$ &$11171$ &$3872$ &$\bm{2507}$ &$7968$ && $380$ &  &  & $33291$ & $23833$ & $\bm{12615}$ \\\cmidrule{1-6}\cmidrule{8-13}
$245$ & $20378$ &$12315$ &$4307$ &$\bm{2667}$ &$8134$ && $385$ &  &  & $35904$ &  & $\bm{12781}$ \\\cmidrule{1-6}\cmidrule{8-13}
$250$ & $24090$ &$13652$ &$4827$ &$\bm{2840}$ &$8300$ && $390$ &  &  & $38885$ & $25239$ & $\bm{12947}$ \\\cmidrule{1-6}\cmidrule{8-13}
$255$ & $29230$ &$15238$ &$5460$ &$\bm{3030}$ &$8466$ && $395$ &  &  & $42316$ &  & $\bm{13112}$ \\\cmidrule{1-6}\cmidrule{8-13}
$260$ & $36818$ &$17150$ &$6247$ &$\bm{3237}$ &$8632$ && $400$ &  &  & $46310$ & $26756$ & $\bm{13278}$\\\bottomrule
\end{tabular}
\end{adjustbox}\bigskip
\caption{Upper bounds for $M_{1/7}(n)$ by diverse methods, including new results 
with $\Delta_6(G)^*$. The best bound in each dimension is in boldface.} 
\label{table:values-a7}
\end{table}

\begin{table}[p]\small
\begin{adjustbox}{width=1.25\textwidth,center}
\begin{tabular}{@{}ccccccccccccc@{}}
\toprule
$\bm{n}$ & $\bm{\Delta_3(G)^*}$~\cite{barg14, king16} & $\bm{\Delta_4(G)^*}$ & 
$\bm{\Delta_5(G)^*}$ & $\bm{\Delta_6(G)^*}$ & 
\textbf{Thm~\ref{thm:glazyrin4}~\cite{glazyrin18}} & &
$\bm{n}$ & $\bm{\Delta_3(G)^*}$~\cite{barg14, king16} & $\bm{\Delta_4(G)^*}$ & 
$\bm{\Delta_5(G)^*}$ & $\bm{\Delta_6(G)^*}$ & 
\textbf{Thm~\ref{thm:glazyrin4}~\cite{glazyrin18}} \\\cmidrule{1-6}\cmidrule{8-13}
$225$ & $\bm{3160}$ &$3160$ &$3160$ &$3160$ &$12269$ && $315$ & $18193$ & $8654$ & $4558$ & $\bm{3515}$ & $17176$ \\\cmidrule{1-6}\cmidrule{8-13}
$230$ & $3306$ &$\bm{3160}$ &$3160$ &$3160$ &$12542$ && $320$ & $19112$ & $9815$ & $4836$ & $\bm{3666}$ & $17449$ \\\cmidrule{1-6}\cmidrule{8-13}
$235$ & $3642$ &$\bm{3160}$ &$3160$ &$3160$ &$12814$ && $325$ & $20103$ & $11281$ & $5140$ & $\bm{3825}$ & $17721$ \\\cmidrule{1-6}\cmidrule{8-13}
$240$ & $4035$ &$\bm{3160}$ &$3160$ &$3160$ &$13087$ && $330$ & $21172$ & $13192$ & $5473$ & $\bm{3993}$ & $17994$ \\\cmidrule{1-6}\cmidrule{8-13}
$245$ & $4502$ &$\bm{3160}$ &$3160$ &$3160$ &$13360$ && $335$ & $22330$ & $15784$ & $5840$ & $\bm{4171}$ & $18267$ \\\cmidrule{1-6}\cmidrule{8-13}
$250$ & $5063$ &$\bm{3160}$ &$3160$ &$3160$ &$13632$ && $340$ & $23589$ & $19333$ & $6247$ & $\bm{4360}$ & $18539$ \\\cmidrule{1-6}\cmidrule{8-13}
$255$ & $5752$ &$3196$ &$\bm{3160}$ &$3160$ &$13905$ && $345$ & $24961$ & $20385$ & $6700$ & $\bm{4559}$ & $18812$ \\\cmidrule{1-6}\cmidrule{8-13}
$260$ & $6617$ &$3329$ &$\bm{3160}$ &$3160$ &$14177$ && $350$ & $26464$ & $21506$ & $7207$ & $\bm{4772}$ & $19084$ \\\cmidrule{1-6}\cmidrule{8-13}
$265$ & $7737$ &$3561$ &$\bm{3160}$ &$3160$ &$14450$ && $355$ & $28116$ & $22707$ & $7780$ & $\bm{4998}$ & $19357$ \\\cmidrule{1-6}\cmidrule{8-13}
$270$ & $9243$ &$3824$ &$\bm{3160}$ &$3160$ &$14723$ && $360$ & $29940$ & $23999$ & $8430$ & $\bm{5239}$ & $19630$ \\\cmidrule{1-6}\cmidrule{8-13}
$275$ & $11377$ &$4117$ &$3167$ &$\bm{3160}$ &$14995$ && $365$ & $31965$ & $25395$ & $9177$ & $\bm{5497}$ & $19902$ \\\cmidrule{1-6}\cmidrule{8-13}
$280$ & $13235$ &$4445$ &$3219$ &$\bm{3160}$ &$15268$ && $370$ & $34226$ & $26911$ & $10042$ & $\bm{5774}$ & $20175$ \\\cmidrule{1-6}\cmidrule{8-13}
$285$ & $13816$ &$4815$ &$3317$ &$\bm{3160}$ &$15540$ && $375$ & $36767$ & $28563$ & $11057$ & $\bm{6071}$ & $20448$ \\\cmidrule{1-6}\cmidrule{8-13}
$290$ & $14434$ &$5235$ &$3461$ &$\bm{3160}$ &$15813$ && $380$ & $39642$ & $30373$ & $12263$ & $\bm{6391}$ & $20720$ \\\cmidrule{1-6}\cmidrule{8-13}
$295$ & $15091$ &$5718$ &$3647$ &$\bm{3160}$ &$16086$ && $385$ & $42924$ & $32365$ & $13720$ & $\bm{6737}$ & $20993$ \\\cmidrule{1-6}\cmidrule{8-13}
$300$ & $15791$ &$6277$ &$3849$ &$\bm{3160}$ &$16358$ && $390$ & $46703$ & $34571$ & $15515$ & $\bm{7112}$ & $21265$ \\\cmidrule{1-6}\cmidrule{8-13}
$305$ & $16538$ &$6933$ &$4067$ &$\bm{3235}$ &$16631$ && $395$ & $51103$ & $37026$ & $17784$ & $\bm{7521}$ & $21538$ \\\cmidrule{1-6}\cmidrule{8-13}
$310$ & $17336$ &$7712$ &$4302$ &$\bm{3372}$ &$16904$ && $400$ & $56289$ & $39779$ & $20740$ & $\bm{7966}$ & $21811$\\\bottomrule
\end{tabular}
\end{adjustbox}\bigskip
\caption{Upper bounds for $M_{1/9}(n)$ by diverse methods, including new results 
with $\Delta_6(G)^*$. The best bound in each dimension is in boldface.} 
\label{table:values-a9}
\end{table}

\begin{table}[p]\small
\begin{adjustbox}{width=1.25\textwidth,center}
\begin{tabular}{@{}ccccccccccccc@{}}
\toprule
$\bm{n}$ & $\bm{\Delta_3(G)^*}$~\cite{barg14, king16} & $\bm{\Delta_4(G)^*}$ & 
$\bm{\Delta_5(G)^*}$ & $\bm{\Delta_6(G)^*}$ & 
\textbf{Thm~\ref{thm:glazyrin4}~\cite{glazyrin18}} & &
$\bm{n}$ & $\bm{\Delta_3(G)^*}$~\cite{barg14, king16} & $\bm{\Delta_4(G)^*}$ & 
$\bm{\Delta_5(G)^*}$ & $\bm{\Delta_6(G)^*}$ & 
\textbf{Thm~\ref{thm:glazyrin4}~\cite{glazyrin18}} \\\cmidrule{1-6}\cmidrule{8-13}
$345$ & $\bm{7140}$ &$7140$ &$7140$ &$7140$ &$28011$ && $425$ & $30885$ & $11309$ & $7319$ & $\bm{7140}$ & $34506$ \\\cmidrule{1-6}\cmidrule{8-13}
$350$ & $7426$ &$\bm{7140}$ &$7140$ &$7140$ &$28417$ && $430$ & $31817$ & $12186$ & $7494$ & $\bm{7140}$ & $34912$ \\\cmidrule{1-6}\cmidrule{8-13}
$355$ & $8028$ &$\bm{7140}$ &$7140$ &$7140$ &$28823$ && $435$ & $32789$ & $13185$ & $7730$ & $\bm{7140}$ & $35318$ \\\cmidrule{1-6}\cmidrule{8-13}
$360$ & $8715$ &$\bm{7140}$ &$7140$ &$7140$ &$29229$ && $440$ & $33804$ & $14332$ & $8036$ & $\bm{7140}$ & $35724$ \\\cmidrule{1-6}\cmidrule{8-13}
$365$ & $9506$ &$\bm{7140}$ &$7140$ &$7140$ &$29635$ && $445$ & $34863$ & $15665$ & $8407$ & $\bm{7140}$ & $36130$ \\\cmidrule{1-6}\cmidrule{8-13}
$370$ & $10426$ &$\bm{7140}$ &$7140$ &$7140$ &$30041$ && $450$ & $35971$ & $17232$ & $8808$ & $\bm{7144}$ & $36536$ \\\cmidrule{1-6}\cmidrule{8-13}
$375$ & $11511$ &$\bm{7140}$ &$7140$ &$7140$ &$30447$ && $455$ & $37129$ & $19100$ & $9239$ & $\bm{7190}$ & $36942$ \\\cmidrule{1-6}\cmidrule{8-13}
$380$ & $12809$ &$\bm{7140}$ &$7140$ &$7140$ &$30853$ && $460$ & $38342$ & $21365$ & $9703$ & $\bm{7285}$ & $37348$ \\\cmidrule{1-6}\cmidrule{8-13}
$385$ & $14389$ &$7180$ &$\bm{7140}$ &$7140$ &$31259$ && $465$ & $39613$ & $24170$ & $10205$ & $\bm{7427}$ & $37754$ \\\cmidrule{1-6}\cmidrule{8-13}
$390$ & $16354$ &$7353$ &$\bm{7140}$ &$7140$ &$31665$ && $470$ & $40948$ & $27732$ & $10749$ & $\bm{7618}$ & $38160$ \\\cmidrule{1-6}\cmidrule{8-13}
$395$ & $18866$ &$7692$ &$\bm{7140}$ &$7140$ &$32071$ && $475$ & $42349$ & $32408$ & $11341$ & $\bm{7859}$ & $38566$ \\\cmidrule{1-6}\cmidrule{8-13}
$400$ & $22187$ &$8154$ &$\bm{7140}$ &$7140$ &$32477$ && $480$ & $43823$ & $38495$ & $11986$ & $\bm{8142}$ & $38972$ \\\cmidrule{1-6}\cmidrule{8-13}
$405$ & $26786$ &$8661$ &$\bm{7140}$ &$7140$ &$32883$ && $485$ & $45376$ & $39896$ & $12695$ & $\bm{8442}$ & $39378$ \\\cmidrule{1-6}\cmidrule{8-13}
$410$ & $28304$ &$9221$ &$\bm{7140}$ &$7140$ &$33289$ && $490$ & $47012$ & $41346$ & $13474$ & $\bm{8758}$ & $39784$ \\\cmidrule{1-6}\cmidrule{8-13}
$415$ & $29130$ &$9841$ &$7146$ &$\bm{7140}$ &$33695$ && $495$ & $48741$ & $42853$ & $14337$ & $\bm{9091}$ & $40190$ \\\cmidrule{1-6}\cmidrule{8-13}
$420$ & $29990$ &$10533$ &$7204$ &$\bm{7140}$ &$34100$ && $500$ & $50569$ & $44424$ & $15296$ & $\bm{9443}$ & $40595$\\\bottomrule
\end{tabular}
\end{adjustbox}\bigskip
\caption{Upper bounds for $M_{1/11}(n)$ by diverse methods, including new results 
with $\Delta_6(G)^*$. The best bound in each dimension is in boldface.} 
\label{table:values-a11}
\end{table}

\FloatBarrier

\appendix

\section{Proof of the sufficiency part of Theorem~\ref{teo:main-charac}}
\label{ap:gegenbauer}

We now prove, for the sake of completeness, a theorem that, together
with the linear transformation $L(A_R)$ used to compute the
coordinates of the projection of a vector with respect to an
orthonormal basis of $\spann(R)$, amounts to the sufficiency part of
Theorem~\ref{teo:main-charac}, which is the direction used in this
paper. It is a restatement of a proposition of Musin~\cite[Corollary
3.1]{musin14}.

Recall that $P_l^n$ is the Gegenbauer polynomial of degree $l$ in
dimension $n$ normalized so that $P_l^n(1) = 1$.  For
$0 \leq m \leq n - 2$, $t \in \R$, and $u$, $v \in \R^m$, the
multivariate Gegenbauer polynomial~$P_l^{n,m}$ is the
$(2m+1)$-variable polynomial
\begin{equation}
\label{eq:gegenbauer-multivar-def}
P_l^{n, m}(t,u,v) = \big((1-\|u\|^2)(1-\|v\|^2)\big)^{l/2}P_l^{n-m}\bigg(\frac{t-u 
  \cdot v}{\sqrt{(1-\|u\|^2)(1-\|v\|^2)}}\bigg),
\end{equation}
where $\|v\| = \sqrt{v \cdot v}$. If we use the convention
$\R^0 = \{0\}$, then $P_l^n(t) = P_l^{n,0}(t,0,0)$. Fix $d \geq 0$,
let $\Bcal_l$ be a basis of the space of $m$-variable polynomials of
degree at most~$l$ (e.g.\ the monomial basis), and write $z_l(u)$ for
the column vector containing the polynomials of $\Bcal_l$ evaluated at
$u \in \R^m$. The matrix $Y_l^{n, m}$ is the matrix of polynomials
\[Y_l^{n,m}(t,u,v) =  P_l^{n,m}(t,u,v) z_{d-l}(u)z_{d-l}(v)^{\sf T}.\]

\begin{theorem}[Musin~\cite{musin14}]\label{prop:Hl-kernel}
  Let $R \subseteq S^{n-1}$ with $m = \dim(\spann(R)) \leq n-2$ and
  let~$E$ be an $m \times n$ matrix whose rows form an orthonormal
  basis for $\spann(R)$.  Fix $d \geq 0$ and, for each
  $0 \leq l \leq d$, let $F_l$ be a positive semidefinite matrix of
  size $\binom{d-l+m}{m} \times \binom{d-l+m}{m}$.  Then
  $K \colon S^{n-1} \times S^{n-1} \to \R$ given by
  \[
  K(x,y) = \sum_{l = 0}^d \big\langle F_l, Y_l^{n, m}(x \cdot y, Ex, Ey)\big\rangle
  \]
  is a positive, continuous, and $\Stab{\spann(R)}{\On}$-invariant
  kernel.
\end{theorem}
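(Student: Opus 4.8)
The plan is to verify the three asserted properties of $K$ separately: continuity and $\Stab{\spann(R)}{\On}$-invariance are formal, while positivity is the substantive part and rests on the classical addition formula for spherical harmonics on $\spann(R)^{\perp}$. \emph{Setup.} Since the rows of $E$ form an orthonormal basis of $\spann(R)$, the vector $Ex$ records the coordinates of the orthogonal projection of $x\in\R^n$ onto $\spann(R)$ in that basis; write $x'' \in W := \spann(R)^{\perp}$ for the complementary component, so that $\|x''\|^2 = 1 - \|Ex\|^2$ whenever $x\in S^{n-1}$, and fix an isometry $W\cong\R^{n-m}$. First I would record that $P_l^{n,m}$ is genuinely a polynomial: since $P_l^{n-m}$ has the parity of $l$, writing $P_l^{n-m}(s)=\sum_j c_j s^j$ with $j\equiv l\pmod 2$ gives $P_l^{n,m}(t,u,v)=\sum_j c_j (t-u\cdot v)^j\big((1-\|u\|^2)(1-\|v\|^2)\big)^{(l-j)/2}$ with every exponent $(l-j)/2$ a nonnegative integer. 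Hence the right-hand side of \eqref{eq:K-thm} is a polynomial in the entries of $x$ and $y$, so $K$ is continuous.

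\emph{Invariance.} Here $\Stab{\spann(R)}{\On}$ denotes the pointwise stabilizer of $\spann(R)$, a copy of $\Orm(n-m)$ acting on $W$. For $g$ in this group, $g$ fixes $\spann(R)$ pointwise and maps $W$ to itself, so the projection of $gx$ onto $\spann(R)$ equals the projection of $x$; thus $E(gx)=Ex$, $E(gy)=Ey$, and $gx\cdot gy=x\cdot y$, whence \eqref{eq:K-thm} gives $K(gx,gy)=K(x,y)$.

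\emph{Positivity.} Factor each $F_l=\sum_r g_{l,r}g_{l,r}^{\sf T}$ and set $q_{l,r}(u)=g_{l,r}^{\sf T}z_{d-l}(u)$; then $K(x,y)=\sum_{l=0}^d\sum_r P_l^{n,m}(x\cdot y,Ex,Ey)\,q_{l,r}(Ex)\,q_{l,r}(Ey)$, so it suffices to prove that $\Phi_l(x,y):=P_l^{n,m}(x\cdot y,Ex,Ey)$ is a positive kernel for each $l$. For $x,y\in S^{n-1}$ with $x'',y''\neq 0$, set $\xi=x''/\|x''\|$ and $\eta=y''/\|y''\|$ in $S^{n-m-1}$; then $x\cdot y - Ex\cdot Ey = x''\cdot y'' = \|x''\|\,\|y''\|\,(\xi\cdot\eta)$, so the argument of $P_l^{n-m}$ in the definition of $P_l^{n,m}$ is exactly $\xi\cdot\eta$ and $\Phi_l(x,y)=\|x''\|^l\|y''\|^l P_l^{n-m}(\xi\cdot\eta)$. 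Let $Y_1,\dots,Y_h$ be an $L^2(S^{n-m-1})$-orthonormal basis of the degree-$l$ spherical harmonics ($h$ its dimension) and let $\tilde Y_i$ be the degree-$l$ homogeneous harmonic polynomial on $\R^{n-m}$ restricting to $Y_i$. The addition formula gives $P_l^{n-m}(\xi\cdot\eta)=c_{n-m,l}\sum_{i=1}^h Y_i(\xi)Y_i(\eta)$ for a positive constant $c_{n-m,l}$, and homogeneity gives $\|x''\|^l Y_i(\xi)=\tilde Y_i(x'')$, so $\Phi_l(x,y)=c_{n-m,l}\sum_{i=1}^h \tilde Y_i(x'')\,\tilde Y_i(y'')$. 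Both sides are polynomials in $x,y$ that agree on the dense set where $x''\neq0$ and $y''\neq 0$, hence everywhere; this proves positivity of $\Phi_l$ and, together with the reduction above, exhibits $K(x,y)=\sum_{l,r,i}\big(\sqrt{c_{n-m,l}}\,q_{l,r}(Ex)\tilde Y_i(x'')\big)\big(\sqrt{c_{n-m,l}}\,q_{l,r}(Ey)\tilde Y_i(y'')\big)$ as a finite sum of products $\phi(x)\phi(y)$ of polynomials, so $\big(K(x,y)\big)_{x,y\in U}$ is positive semidefinite for every finite $U\subseteq S^{n-1}$.

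\emph{Main obstacle.} The substantive input is the addition formula for spherical harmonics together with the identification of degree-$l$ spherical harmonics on $S^{n-m-1}$ with their degree-$l$ homogeneous harmonic extensions; everything else is bookkeeping. The one point demanding care is the locus $x\in\spann(R)$, where $x''=0$ and the defining quotient in $P_l^{n,m}$ is formally $0/0$ for $l\ge 1$: this is why it is cleanest to first record that $P_l^{n,m}$ (hence $\Phi_l$ and $K$) is an honest polynomial and only afterwards match it, on a dense set, with the spherical-harmonic expansion.
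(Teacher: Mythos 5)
Your proof is correct and follows essentially the same plan as the paper's: treat continuity and invariance formally, and reduce positivity to showing the matrices $\bigl(P_l^{n,m}(x\cdot y, Ex, Ey)\bigr)_{x,y}$ are positive semidefinite by decomposing $x = x_1 + x_2$ with $x_2 \in \spann(R)^\perp$ and obtaining $P_l^{n,m}(x\cdot y,Ex,Ey) = \|x_2\|^l\|y_2\|^l\,P_l^{n-m}(\bar x\cdot\bar y)$. Two minor variations from the paper: where the paper simply invokes Schoenberg's theorem to see that $\bigl(P_l^{n-m}(\bar x\cdot\bar y)\bigr)_{x,y}$ is positive semidefinite, you unpack that step via the addition formula for spherical harmonics, which gives an explicit rank-one decomposition of $K$ and makes the argument more self-contained; and where the paper handles the degenerate locus $\|x_2\|=0$ by a direct check that both sides of the identity vanish, you instead note both sides are polynomials agreeing on a dense set, so agree everywhere. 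Both substitutions are sound, and the overall structure is the same as the paper's.
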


First we prove that the multivariate Gegenbauer polynomials satisfy
the following positivity property \cite[Theorem 3.1]{musin14}.

\begin{proposition}[Musin~\cite{musin14}]\label{prop:Pk-positive}
  For $0 \leq m \leq n - 2$, let $E$ be an $m \times n$ matrix whose
  rows form an orthonormal set in $\R^n$ and $C$ be a finite subset of
  $S^{n-1}$.  Then, for every nonnegative integer $l$, the matrix
  $ \big( P_l^{n,m}(x \cdot y, Ex, Ey) \big)_{x, y \in C}$ is positive
  semidefinite.
\end{proposition}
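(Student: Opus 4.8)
The plan is to reduce Proposition~\ref{prop:Pk-positive} to the classical fact, due to Schoenberg~\cite{schoenberg42}, that the univariate Gegenbauer polynomial $P_l^{k}$ is positive definite on $S^{k-1}$, i.e.\ that $\bigl(P_l^{k}(w \cdot w')\bigr)_{w,w'}$ is positive semidefinite for every finite family in $S^{k-1}$; here we take $k = n-m \ge 2$. To set this up I would let $W \subseteq \R^n$ be the span of the rows of $E$ and fix an $(n-m) \times n$ matrix $E^{\perp}$ whose rows, together with the rows of $E$, form an orthonormal basis of $\R^n$. For $x \in S^{n-1}$ put $u = Ex \in \R^m$ and $x' = E^{\perp}x \in \R^{n-m}$, so that $\|u\|^2 + \|x'\|^2 = 1$ and, for $x$, $y \in S^{n-1}$, one has $x \cdot y = u \cdot v + x' \cdot y'$ with $v = Ey$ and $y' = E^{\perp}y$.

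The key step is the identity obtained by substituting $t = x \cdot y$, $u = Ex$, $v = Ey$ into \eqref{eq:gegenbauer-multivar-def} and using $1 - \|Ex\|^2 = \|x'\|^2$ together with $x \cdot y - Ex \cdot Ey = x' \cdot y'$:
\[
  P_l^{n,m}(x \cdot y, Ex, Ey) = \|x'\|^l\,\|y'\|^l\,P_l^{n-m}\!\left(\frac{x' \cdot y'}{\|x'\|\,\|y'\|}\right),
\]
valid whenever $x$, $y \notin W$. Since $P_l^{n-m}$ has the same parity as $l$, the right-hand side is actually a polynomial in $x' \cdot y'$, $\|x'\|^2$ and $\|y'\|^2$, hence a polynomial in $x$ and $y$, which therefore agrees with the polynomial $P_l^{n,m}(x \cdot y, Ex, Ey)$ on all of $S^{n-1} \times S^{n-1}$ by density; in particular, for $l \ge 1$ it vanishes whenever $\|x'\| = 0$ or $\|y'\| = 0$.

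Granting this, the conclusion is immediate. For $l = 0$ the matrix in question is the all-ones matrix, which is positive semidefinite. For $l \ge 1$, writing $C = \{x_1, \ldots, x_N\}$, set $w_i = x_i'/\|x_i'\|$ when $x_i' \neq 0$ and let $w_i$ be an arbitrary fixed element of $S^{n-m-1}$ (nonempty because $m \le n-2$) otherwise. By Schoenberg's theorem the matrix $M = \bigl(P_l^{n-m}(w_i \cdot w_j)\bigr)_{i,j}$ is positive semidefinite, and the identity above gives
\[
  \bigl(P_l^{n,m}(x_i \cdot x_j, Ex_i, Ex_j)\bigr)_{i,j} = D\,M\,D, \qquad D = \mathrm{diag}\bigl(\|x_1'\|^l, \ldots, \|x_N'\|^l\bigr),
\]
which is positive semidefinite as a congruence of $M$. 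The rows and columns indexed by those $x_i$ lying in $W$ vanish on both sides, so the arbitrary choice of the corresponding $w_i$ is harmless; this argument follows Musin~\cite{musin14}.

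I expect the only point needing care to be the passage from the defining formula \eqref{eq:gegenbauer-multivar-def}, which involves $\sqrt{1 - \|u\|^2}$, to the polynomial identity above on the locus where $\|x'\|$ or $\|y'\|$ vanishes, i.e.\ the cases $x \in W$ or $y \in W$. This is settled precisely by the parity of the univariate Gegenbauer polynomials, the same observation already used in Section~\ref{sec:parameterizing} to see that $P_l^{n,m}$ is a polynomial; everything else is a diagonal congruence together with Schoenberg's classical result.
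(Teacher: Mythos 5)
Your proposal is correct and follows essentially the same route as the paper: decompose $x$ into its components in the row-span of $E$ and its orthogonal complement, rewrite $P_l^{n,m}(x\cdot y, Ex, Ey)$ as $\|x'\|^l\|y'\|^l\,P_l^{n-m}(\bar x\cdot\bar y)$, and reduce to Schoenberg's theorem for $P_l^{n-m}$ on $S^{n-m-1}$. The only cosmetic differences are that the paper phrases the final step as a Schur product with the rank-one PSD matrix $\bigl(\|x_2\|^l\|y_2\|^l\bigr)$ rather than the equivalent diagonal congruence $DMD$, and it disposes of the boundary case $\|x_2\|=0$ by directly checking both sides vanish rather than by your density argument.
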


\begin{proof}
If $l = 0$ then all polynomials evaluate to $1$ and the proposition
holds, so we assume $l \neq 0$. Let $L$ be the space spanned by the
rows of $E$ and $z$ be a unit vector in $L^\perp$. For each $x \in C$,
write $x = x_1 + x_2$ with $x_1 \in L$ and $x_2 \in L^\perp$. If
$\|x_2\| > 0$, then let $\bar{x} = x_2 / \|x_2\|$, otherwise write
$\bar{x} = z$.
If $\|x_2\|$, $\|y_2\| \neq 0$, then
\[
  \bar{x} \cdot \bar{y} = \frac{x_2\cdot y_2}{\|x_2\|\|y_2\|} =
  \frac{x\cdot y - x_1 \cdot y_1}{\sqrt{(1-\|x_1\|^2)(1-\|y_1\|^2)}}.
\]
Since the rows of $E$ are orthonormal, we have
$x_1 \cdot y_1 = (Ex)\cdot (Ey)$ and hence
$\|x_2\|^l\|y_2\|^lP_l^{n-m}(\bar{x} \cdot \bar{y}) = P_l^{n,m}(x
\cdot y, Ex, Ey)$.

If, say, $\|x_2\| = 0$, then
$\|x_2\|^l\|y_2\|^l P_l^{n-m}(\bar{x} \cdot \bar{y}) = 0$, while
$P_l^{n,m}(x \cdot y, Ex, Ey)$ is also~$0$ as can be seen
from~\eqref{eq:gegenbauer-multivar-def} since
$x \cdot y - x_1 \cdot y_1 = x_2 \cdot y_2 = 0$.

Now $\{\,\bar{x} : x \in C\,\}$ is contained in an embedding of
$S^{n-m-1}$ into~$S^{n-1}$ and by Schoenberg's
theorem~\cite{schoenberg42} we have that
$\big(P_l^{n-m}(\bar{x} \cdot \bar{y})\big)_{x, y \in C}$ is positive
semidefinite. Since $\big(\|x_2\|^l\|y_2\|^l\big)_{x, y \in C}$ is
positive semidefinite, so is
$ \big( \|x_2\|^l \|y_2\|^l P_l^{n-m}(\bar{x} \cdot \bar{y})\big)_{x,
  y \in C}$, and we are done.
\end{proof}

\begin{proof}[Proof of Theorem~\ref{prop:Hl-kernel}]
Since all entries of $Y_l^{n,m}$ are polynomials, $K$ is continuous,
and since $x \cdot y$, $Ex$, and $Ey$ are invariant under the action
of $\Stab{\spann(R)}{\On}$ on $(x,y)$, $K$ is invariant.
To prove positivity, let $C$ be a finite subset of $S^{n-1}$ and
$w \colon C \to \R$ be a function. We have
\[
  \sum_{x, y \in C}w_x w_y K(x,y) = \sum_{l = 0}^d \bigg\langle F_l, \sum_{x,y 
    \in C} w_xw_y Y_l^{n,m}(x \cdot y, Ex, Ey)\bigg\rangle.
\]
To show this quantity is nonnegative, we will show that for all
$l = 0$, \dots,~$d$ the matrix
$\sum_{x, y \in C}w_x w_y Y_l^{n,m}\big(x \cdot y, Ex, Ey \big)$ is
positive semidefinite. For this, write it as a product of matrices:
if~$B$ is the matrix whose columns are given by~$z_{d-l}(Ex)$
for~$x \in C$, then
\begin{align*}
\sum_{x, y \in C}w_x w_y Y_l^{n,m}(x \cdot y, Ex, Ey)
&= \sum_{x, y \in C}w_x w_y z_{d-l}(Ex)z_{d-l}(Ey)^{\sf 
T}P_l^{n,m}(x \cdot y, Ex, Ey)\\
&= B \big(P_l^{n,m}(x \cdot y, Ex, Ey)\big)_{x, y \in C} B^{\sf T},
\end{align*}
and, since the matrix
$\big(P_l^{n,m}(x \cdot y, Ex, Ey)\big)_{x, y \in C}$ is positive
semidefinite by Proposition~\ref{prop:Pk-positive}, we are done.
\end{proof}


\end{document}